\newcommand{\ZZ}{\mathbb{Z}}
\newcommand{\RR}{\mathbb{R}}
\newcommand{\CC}{\mathbb{C}}
\newcommand{\id}{\mathrm{id}}
\newcommand{\I}{\mathrm{i}}
\newcommand{\ad}{\mathrm{ad}}
\newcommand{\Ad}{\mathrm{Ad}}
\newcommand{\liealg}[1]{\mathfrak{#1}}
\newcommand{\lieg}{\liealg{g}}
\newcommand{\liep}{\liealg{p}}
\newcommand{\liek}{\liealg{k}}
\newcommand{\lieh}{\liealg{h}}
\newcommand{\lies}{\liealg{s}}
\newcommand{\liet}{\liealg{t}}
\newcommand{\rank}{\operatorname{rank}}
\newcommand{\Aut}[1][]{\operatorname{Aut}\!#1}
\newcommand{\Out}{\operatorname{Out}}
\theoremstyle{definition}
\theoremstyle{theorem}
\newtheorem{theorem}{Theorem}[section]
\newtheorem{lemma}[theorem]{Lemma}
\newtheorem{proposition}[theorem]{Proposition}
\newtheorem{definition}[theorem]{Definition}
\newtheorem{corollary}[theorem]{Corollary}
\newcounter{claimcounter}[theorem]
\newtheorem{claimenv}{Claim}
\begin{document}
	\title{On the equivariant cohomology of $\ZZ_2\times\ZZ_k$-symmetric spaces}
	\author{Sam Hagh Shenas Noshari}
	\address{Departement of Mathematics, University of Fribourg, Switzerland}
	\email{sam.haghshenasnoshari@unifr.ch}

	\begin{abstract}
		In \cite{Lutz} the class of $\Gamma$-symmetric spaces was introduced, a vast generalization of symmetric spaces. Previous results
		make it conceivable that their isotropy action is equivariantly formal, and we provide evidence for this in case that $\Gamma = \ZZ_2\times\ZZ_k$. 
		This in particular implies that $\ZZ_2\times\ZZ_k$-symmetric spaces are formal in the sense of Rational Homotopy Theory.  
	\end{abstract}

	\maketitle
	
	\section{Introduction}
	
	An action of a Lie group $G$ on a topological space is (rationally) \emph{equivariantly formal} if the rational equivariant cohomology of that action is a free module with
	respect to its natural module structure over the cohomology ring of the classifying space of $G$. The question of whether or not the equivariant cohomology 
	of an action is free has already been considered by Borel \cite{Borel-seminar}, but it was probably only after the publication of the seminal work of Goresky, Kottwitz,
	and MacPherson that this notion gained considerable attention (indeed, the terminology equivariant formality first appeared in \cite{GoreskyKottwitzMacPherson}). This is partly
	explained by the fact that, in the same article, it was also shown how the entire equivariant cohomology ring of certain equivariantly formal actions can be recovered from 
	the knowledge of the $0$- and $1$-dimensional orbits. This technique of describing the equivariant cohomology ring has found widespread applications and 
	spawned many generalizations which are frequently subsumed under the name of GKM theory, see e.\,g.\,\cite{GuilleminZara-one-skeleton,GuilleminHolmTara,HaradaHenriquesHolm,GuilleminHolmZara,Mare-quaternionic,GuilleminSabatiniZara,MareWillems-octonionic,GoertschesMare-nonabelian-gkm,GoertschesWiemeler,Kuroki,GoertschesKonstantisZoller-non-kaehler,GoertschesKonstantisZoller-non-rigid,GoertschesKonstantisZoller-low-dimensional-gkm,EscherGoertschesSearle}.
	
	It should be noted that the property of being equivariantly formal imposes severe restrictions on the possible topology of the space being acted upon. For example,
	if a torus acts on a compact space with finitely many fixed points, then the vanishing of the odd degree cohomology of that space is a necessary and sufficient
	criterion for equivariant formality. In view of this, it is not too surprising that only a few classes of equivariantly formal actions have been identified.
	
	A rich source of examples which in a sense form the basic building blocks of every action are \emph{isotropy actions} on homogeneous spaces $G/K$, that is,
	the action of $K$ on $G/K$ induced by the group law in $G$, and it is natural to ask when this action is equivariantly formal. A prominent example 
	admitting an affirmative answer to this question is when $G$ and $K$ are of equal rank, see \cite{GuilleminHolmZara}. Equivariantly formal isotropy actions 
	where $K$ is of rank $1$ where classified in \cite{Carlson-torus}, and the case of subgroups $K$ of corank $1$ was considered in \cite{CarlsonHe-corank-one}. 
	Further examples of isotropy actions which are always equivariantly formal are those in which $G/K$ is a symmetric space \cite{Goertsches-symmetric}, or in which $K$ is the fixed 
	point set of an automorphism of $G$ \cite{Goertsches-automorphisms}. Generalizing the case of symmetric spaces in another direction, it was shown in \cite{AmannKollross} and independently
	\cite{thesis} that the isotropy action is equivariantly formal if $K$ is the fixed point set of two commuting involutions. With these results in mind,
	one might then wonder whether the isotropy action is equivariantly formal if $G/K$ is allowed to be a $\Gamma$-symmetric space \cite{Lutz}, that is, if $K$ is the common fixed point 
	set of a finitely generated Abelian subgroup $\Gamma \subseteq \Aut(G)$. The main purpose of this note is to provide further evidence for this conjecture:

	\begin{theorem}\label{thm-main}
		Let $G/K$ be $\ZZ_2\times\ZZ_k$ symmetric space, with a presentation in which $G$ and $K$ are compact, connected Lie groups.
		Then the isotropy action of $K$ on $G/K$ is equivariantly formal. In particular, the space $G/K$ is formal in the sense of Rational Homotopy Theory.
	\end{theorem}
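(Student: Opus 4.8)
The plan is to deduce the $\ZZ_2\times\ZZ_k$ case from three facts that are already available: equivariant formality of the isotropy action on symmetric spaces \cite{Goertsches-symmetric}, on the quotient by the fixed point set of a single automorphism \cite{Goertsches-automorphisms}, and on the quotient by the common fixed point set of two commuting involutions \cite{AmannKollross,thesis}. Write $\Gamma=\langle\sigma\rangle\times\langle\tau\rangle$ with $\sigma$ of order $2$ and $\tau$ of order $k$, so that, up to connected components, $K=G^\sigma\cap G^\tau$. If $k$ is odd then $\Gamma$ is cyclic of order $2k$ and $K$ is the fixed point set of a single automorphism of $G$, so the assertion is a special case of \cite{Goertsches-automorphisms}; hence I assume $k$ is even. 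Throughout I use the standard criterion that, for a compact connected group $K$, the isotropy action of $K$ on $G/K$ is equivariantly formal if and only if the edge homomorphism $H^*_K(G/K;\QQ)\to H^*(G/K;\QQ)$ of the fibration $G/K\hookrightarrow (G/K)_K\to BK$ is surjective, equivalently $H^*_K(G/K;\QQ)$ is a free $H^*(BK;\QQ)$-module.

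Since $k$ is even, put $K':=\bigl(G^\sigma\cap G^{\tau^{k/2}}\bigr)_0$. As $\sigma$ and $\tau^{k/2}$ are commuting involutions, $G/K'$ is a $\ZZ_2\times\ZZ_2$-symmetric space and the isotropy action of $K'$ on $G/K'$ is equivariantly formal by \cite{AmannKollross,thesis}. Because $\tau$ commutes with both $\sigma$ and $\tau^{k/2}$, it restricts to an automorphism $\bar\tau$ of $K'$ with $\bar\tau^{k/2}=\id$ (as $\tau^{k/2}$ fixes $K'$ pointwise), and, up to connected components, $K=(K')^{\bar\tau}$; thus $K$ is the fixed point set of an automorphism of $K'$, and the isotropy action of $K$ on $K'/K$ is equivariantly formal by \cite{Goertsches-automorphisms}. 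Moreover, passing from $K'$ to $K$ in the Borel construction, the isotropy action of $K$ itself on $G/K'$ is equivariantly formal, because $H^*_{K'}(G/K';\QQ)\to H^*(G/K';\QQ)$ factors through $H^*_K(G/K';\QQ)\to H^*(G/K';\QQ)$, which is therefore onto. In particular $G/K'$, $K'/K$ and $G/K$ are all formal in the sense of rational homotopy theory, and we are reduced to propagating equivariant formality along the chain $K\subseteq K'\subseteq G$.

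The homogeneous fibration $K'/K\hookrightarrow G/K\to G/K'$ is $K$-equivariant for the isotropy action and has connected structure group $K'$; applying the Borel construction over $EK$ yields a fibration $K'/K\hookrightarrow (G/K)_K\to (G/K')_K$ with untwisted rational coefficients, whose fibre inclusion is, up to homotopy, the composite $K'/K\hookrightarrow G/K\hookrightarrow (G/K)_K$. Suppose this fibration is totally non-cohomologous to zero over $\QQ$, i.e.\ that the restriction $H^*_K(G/K;\QQ)\to H^*(K'/K;\QQ)$ to the fibre is surjective. Then by Leray--Hirsch $H^*_K(G/K;\QQ)$ is free as a module over $H^*\bigl((G/K')_K;\QQ\bigr)=H^*_K(G/K';\QQ)$, which is a free $H^*(BK;\QQ)$-module by the equivariant formality of the $K$-action on $G/K'$ established above; hence $H^*_K(G/K;\QQ)$ is free over $H^*(BK;\QQ)$, and the isotropy action of $K$ on $G/K$ is equivariantly formal. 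Thus the whole problem is reduced to showing that $H^*_K(G/K;\QQ)\to H^*(K'/K;\QQ)$ is surjective.

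Proving this surjectivity is the main obstacle I expect. It factors as $H^*_K(G/K;\QQ)\to H^*_K(K'/K;\QQ)\to H^*(K'/K;\QQ)$, whose second arrow is onto by the equivariant formality of the isotropy action of $K$ on $K'/K$, so it suffices to show that the $K$-equivariant restriction to the fibre $H^*_K(G/K;\QQ)\to H^*_K(K'/K;\QQ)$ is onto; a fortiori, it is necessary that the non-equivariant fibration $K'/K\hookrightarrow G/K\to G/K'$ be totally non-cohomologous to zero, i.e.\ that $\dim_\QQ H^*(G/K;\QQ)=\dim_\QQ H^*(G/K';\QQ)\cdot\dim_\QQ H^*(K'/K;\QQ)$. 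I would attack these statements using the structure theory of $\ZZ_2\times\ZZ_k$-symmetric spaces from \cite{Lutz}: choosing a maximal torus $T\subseteq G$ invariant under all of $\Gamma$ — which exists since $\Gamma$ is finite — one obtains compatible maximal tori $T\cap K\subseteq T\cap K'\subseteq T$ and a Kac-type combinatorial description of the $\ZZ_2\times\ZZ_k$-grading of $\lieg$, reducing the three Betti sums to invariant-theoretic quantities attached to the Weyl groups of $K$, $K'$, $G$ together with the induced actions of $\sigma$ and $\tau$, and one may then exhibit explicit (equivariant) lifts to $G/K$ of a system of algebra generators of $H^*(K'/K;\QQ)$. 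This is precisely where the hypothesis $\Gamma=\ZZ_2\times\ZZ_k$, rather than an arbitrary finitely generated abelian group, is genuinely used; when $\rank K=\rank K'=\rank G$ the dimension identity is immediate from $\dim_\QQ H^*(G/H;\QQ)=|W_G|/|W_H|$ and $(G,K)$ is in fact equivariantly formal outright by \cite{GuilleminHolmZara}, so the difficulty is concentrated in the rank-dropping case. Alternatively the argument can be organised as an induction on $k$: taking instead $K'=(G^\sigma\cap G^{\tau^2})_0$, which presents a $\ZZ_2\times\ZZ_{k/2}$-symmetric space of $G$, the fibre $K'/K$ becomes a genuine symmetric space (since then $\bar\tau$ has order at most $2$), so only \cite{Goertsches-symmetric} and the inductive hypothesis are needed, at the cost of checking the total-non-cohomologous-to-zero condition at each stage. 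Finally, formality of $G/K$ in the sense of rational homotopy theory is a formal consequence of equivariant formality of the isotropy action, exactly as recorded in \cite{AmannKollross}.
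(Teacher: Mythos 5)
Your reduction is set up correctly as far as it goes: for odd $k$ the group $\ZZ_2\times\ZZ_k$ is cyclic and \cite{Goertsches-automorphisms} applies, and for even $k$ the intermediate subgroup $K'=(G^{\sigma}\cap G^{\tau^{k/2}})_0$ does satisfy that $(G,K')$ is isotropy formal (by the $\ZZ_2\times\ZZ_2$ case) and that $(K',K)$ is isotropy formal (by the $\ZZ_\ell$ case). But the argument has a genuine gap at exactly the step you flag yourself: the surjectivity of $H^{\ast}_K(G/K;\QQ)\rightarrow H^{\ast}(K'/K;\QQ)$, i.e.\ the totally-non-cohomologous-to-zero property of the Borel-constructed fibration $K'/K\hookrightarrow (G/K)_K\rightarrow (G/K')_K$, is asserted as the remaining task and never proved. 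This is not a routine verification: isotropy formality does not propagate along towers $K\subseteq K'\subseteq G$ in general, and already the non-equivariant multiplicativity $\dim H^{\ast}(G/K)=\dim H^{\ast}(G/K')\cdot\dim H^{\ast}(K'/K)$ that your Leray--Hirsch step would force can fail for arbitrary triples of compact connected groups. So the entire content of the theorem is concentrated in the step you leave as a sketch (``a Kac-type combinatorial description \ldots explicit equivariant lifts of a system of algebra generators''), and the inductive variant at the end of your proposal has the same unproved TNCZ condition at every stage.

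For comparison, the paper never introduces the intermediate group $K'$ or any such fibration. After reducing to simple $G$ (\cref{sec-reduction}), it works throughout with a maximal torus $S$ of $K$ and Carlson's criterion (\cref{prop-formality-depends-on-torus}) that $(G,K)$ is isotropy formal if and only if $(G,S)$ is. The technical core is root-theoretic: \cref{thm-reflection} shows that $\sigma_2$ sends every root vanishing on $\lies$ to its negative, and \cref{thm-normal-form-automorphisms} converts this into a normal form $\sigma_2=c_g\circ\nu$ with $\Ad_g|_{\liet}=\prod_{\alpha\in\Omega^{+}}s_{\alpha}$ and $\nu$ a Dynkin-diagram automorphism. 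In each inner/outer configuration of $(\sigma_1,\sigma_2)$ the paper then exhibits a subgroup $H$ already known to be isotropy formal (the fixed point set of a pair of commuting involutions, or of a Dynkin-diagram automorphism, or a totally non-homologous to zero subalgebra) which shares the maximal torus $S$ with $K$, and concludes by \cref{prop-formality-depends-on-torus}. If you want to salvage your approach, you would need to supply a proof of the TNCZ statement with at least the same amount of structure theory; as written, the proposal reduces the theorem to an unproven claim of comparable difficulty.
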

	
	Although a careful inspection the proof of \cref{thm-main} also shows that $G/K$ is formal, we will exclusively focus on proving that pair $(G, K)$ is isotropy formal, by which, 
	following \cite{CarlsonFok}, we shall mean that the isotropy action of $K$ on $G/K$ is equivariantly formal. This is justified by \cite[theorem 1.4]{CarlsonFok}, according to which 
	the space $G/K$ is necessarily formal in the sense of Rational Homotopy Theory if $(G, K)$ is isotropy formal.
	
	We now briefly comment on the organization of this note and the proof of \cref{thm-main}. As is often the case when proving statements about Lie groups, 
	the general result can be obtained from the corresponding result for simple Lie groups. This is also true for \cref{thm-main}, as we shall show in 
	\cref{sec-reduction}. For the remainder of this section, we hence assume that $G$ is simple. Let then $\sigma_1$ be an 
	involution and $\sigma_2$ a finite-order automorphism on $G$ which commutes with $\sigma_1$. Choose, respectively, maximal tori $T_1$, $T_2$, and $S$ in 
	$K_1 = (G^{\sigma_1})_0$, $K_2 = (G^{\sigma_2})_0$, and $K = (G^{\sigma_1} \cap G^{\sigma_2})_0$, the subscripts indicating that we are considering the connected 
	components containing the identity element. It is a well-known fact that $Z_G(T_i)$ is a maximal torus in $G$, but $Z_G(S)$ will in general be contained in more then one 
	maximal torus of $G$. This is one of the reasons why it is possible to classify automorphisms on simple Lie algebras, but difficult to classify commuting pairs of automorphisms. 
	It is also one of the reasons that makes it difficult to show that $(G, K)$ is isotropy formal: indeed, if $Z_G(S)$ is a maximal torus on a simple Lie group $G$, then it follows 
	immediately from \cref{prop-formality-if-centralizer-is-torus} that $(G, K)$ is isotropy formal.
	
	One is thus lead to investigating the extent as to which $Z_G(S)$ fails to be a maximal torus. To this end, fix a maximal torus $T$ of $G$ containing $S$,
	which we may assume to be both invariant under $\sigma_1$ and $\sigma_2$. Let $\lieg = \liet\oplus\bigoplus_{\alpha} \lieg_{\alpha}$ be the corresponding root space 
	decomposition; here, the sum runs over all roots of $G$, and $\lieg$ and $\liet$ are the complexifications of the Lie algebras of $G$ and $T$, respectively. 
	Elementary Lie theoretic considerations then show that $Z_{\lieg}(\lies) = \liet\oplus\bigoplus_{\alpha} \lieg_{\alpha}$, where this time the sum ranges over the set 
	$\Omega$ of all roots $\alpha$ with $\alpha|_{\lies} = 0$. One of the key observations in \cite{thesis} which allowed us to show that $(G, K)$ is isotropy formal if
	also $\sigma_2$ is an involution was that $\sigma_2$ reflects each root in $\Omega$. In fact, if $\sigma_2$ is an inner involution, then $\sigma_2$ must 
	be the product $\prod_{\alpha} s_{\alpha}$ of simple roots reflections $s_{\alpha}$ as $\alpha$ ranges over the positive roots in $\Omega$.  Moreover,
	the roots in $\Omega$ are \emph{strongly orthogonal}, that is to say, if $\alpha$ and $\beta$ are contained in $\Omega$, then neither $\alpha + \beta$ nor $\alpha - \beta$ are 
	roots, and so the factors in the product $\prod_{\alpha} s_{\alpha}$ commute pairwise. 
	
	In \cref{thm-reflection} we show that this carries over to the case where $\sigma_2$ is an arbitrary automorphism of finite-order. That is, we show that $\sigma_2$ reflects all roots in 
	$\Omega$. While for pairs of involutions this is a fairly
	straightforward consequence of the decomposition of $\lieg$ into eigenspaces of $\sigma_1$ and $\sigma_2$, the proof for general $\sigma_2$ is much more involved and 
	occupies all of \cref{sec-reflection}. Generalizing \cite[proposition 3.2]{thesis} we also
	prove that if $\sigma_2$ reflects all roots in $\Omega$, then $\sigma_2$ too must be a product of simple root reflections $s_{\alpha}$ with $\alpha \in \Omega$, possibly 
	composed with an automorphism of the Dynkin diagram of $G$. We also observe that this Dynkin diagram automorphism commutes with each of the reflections $s_{\alpha}$ 
	and that the roots in $\Omega$ are again strongly orthogonal. 
	
	A striking consequence of these observations is that, on simple Lie groups with isomorphism type different from $\liealg{d}_4$, 
	$\sigma_2$ must necessarily be an involution on $T$ (or the identity map). This raises the following question: can one alter $\sigma_2$ in such a way that $\sigma_2$ 
	becomes an involution on all of $G$ while keeping $S$ as a maximal torus for the altered $K$? This is equivalent to asking whether the Weyl group element 
	$\prod_{\alpha} s_{\alpha}$, $\alpha$ a positive root in $\Omega$, can be lifted to an involutive inner automorphism of $G$. In general this is not possible, 
	but nonetheless, we observe in \cref{sec-case-same-type} that $\prod_{\alpha} s_{\alpha}$ does lift to an involution whenever $\sigma_1$ and $\sigma_2$ represent
	the same automorphism in the outer automorphism group $\Out(G)$ of $G$. This, combined with equivariant formality of isotropy actions of $\ZZ_2\times\ZZ_2$ 
	symmetric spaces, then enables us to deduce that the pair $(G, K)$ is isotropy formal if either $\sigma_1$ and $\sigma_2$ are both inner or both outer automorphisms (\cref{cor-eq-formality-in-good-cases}), or if 
	$\sigma_1$ is an outer automorphism and $\sigma_2$ is inner (\cref{thm-formality-if-involution-is-outer-other-is-inner}).
	
	The remaining case to discuss is when $\sigma_1$ is an inner automorphism, but $\sigma_2$ is not. Lacking a general argument, we explicitly
	determine the set $\Omega$ in this case, see \cref{sec-involution-inner-other-outer}. The overall goal here is to use the explicit description of $\sigma_2$ on $T$ to recover the maximal torus $S$ of $K$, as showing 
	that $(G, K)$ is isotropy formal is equivalent to showing that so is $(G, S)$. Fortunately, there are only three series of simple Lie groups on which $\sigma_2$
	can be an outer automorphism, and the possibilities for the set $\Omega$ are limited as well. This can be seen as follows. If one fixes a root $\alpha \in \Omega$, then there is a 
	notion of positivity  on the set of all roots for which $\alpha$ becomes dominant, and then the remaining roots in $\Omega - \{\pm \alpha\}$ must be contained in the 
	subdiagram of the Dynkin diagram of $G$ that is spanned by all roots which are perpendicular to $\alpha$. Except for root systems of type $\liealg{d}_4$, this subdiagram will 
	contain at most two connected components, and for the classical Lie groups one of these components will belong to the same series as $G$. Proceeding recursively one finds 
	that there is a chain of nested subdiagrams of the Dynkin diagram of $G$ in which the roots of $\Omega$ are contained, and this significantly limits the sets $\Omega$ that 
	might occur, at least up to application of a Weyl group element (this was already observed in \cite{Kostant-cascade} where such sets are called chain cascades). 	
	Further obstructions resulting from the involutivity of $\sigma_1$ provide us with a short list of cases for which we 
	verify that $(G, S)$ must be isotropy formal, and this
	finishes the proof of \cref{thm-main}.
	
	\subsection*{Acknowledgements} 
	This work was supported by the SNSF-Project 200020E\_193062 and the DFG-Priority programme SPP 2026. The author would also like to thank Philipp Reiser for sharing his
	proof of the description of subgroups of $\ZZ_2\times\ZZ_k$. 
	
	\section{Preliminaries}\label{sec-preliminaries}
	
	This section summarizes some known results about automorphisms on simple Lie algebras and equivariant cohomology of isotropy actions which we will use later on. 
	A concise overview of many aspects of equivariant cohomology can be found in \cite{Quillen-spectrum-equivariant-cohomology} or the survey \cite{GoertschesZoller}, and 
	a more comprehensive exposition is provided by the book \cite{AlldayPuppe}. Our main reference for results concerning automorphisms on Lie algebras are \cite[Chapter X]{Helgason} 
	and \cite[section 5]{GrayWolf-homogeneous-spaces-I}.

	\subsection{Automorphisms on Lie algebras}
	We will frequently have to make use of the classification of finite-order automorphisms and their fixed points sets on 
	(finite-dimensional) complex simple Lie algebras \cite[section X.5]{Helgason}, \cite[p.\,106]{GrayWolf-homogeneous-spaces-I}. Given such a 
	Lie algebra $\liealg{u}$ and a Cartan subalgebra $\lieh \subseteq \liealg{u}$, let $\Delta \subseteq \lieh^{\ast}$ be the roots and $\Pi \subseteq \Delta$ be the simple roots with 
	respect to some notion of positivity. There exist elements $X_{\alpha} \in \lieg_{\alpha}$, $Y_{\alpha} \in \lieg_{-\alpha}$ for each $\alpha \in \Pi$, called 
	canonical generators in \cite[proposition X.4.1]{Helgason} and Weyl basis in \cite[lemma 5.3]{GrayWolf-homogeneous-spaces-I}, which generate $\liealg{u}$ as an algebra. If 
	$\nu\colon\Pi\rightarrow\Pi$ is an automorphism of the Dynkin diagram of $\liealg{u}$, then an automorphism $\nu$ of $\liealg{u}$ is determined by requiring that 
	$\nu(X_{\alpha}) = X_{\nu(\alpha)}$ and $\nu(Y_{\alpha}) = Y_{\nu(\alpha)}$ holds for all simple roots $\alpha$. We say that $\nu\colon\liealg{u}\rightarrow\liealg{u}$ is an 
	automorphism of $\liealg{u}$ \emph{induced by an automorphism of the Dynkin diagram} of $\liealg{u}$. Up to conjugation, every automorphism $\mu$ on $\liealg{u}$ then
	is of the form $\mu = c\circ\nu$ for some automorphism $\nu$ induced by an automorphism of the Dynkin diagram, where $c$ is an inner automorphism which commutes with
	$\nu$ and fixes $\lieh$ pointwise, cf.\,\cite[lemma 5.3]{GrayWolf-homogeneous-spaces-I}. In particular, the rank of $\liealg{u}^{\mu}$ must be equal to the rank of
	$\liealg{u}^{\nu}$ for some automorphism $\nu$ which is induced by an automorphism of the Dynkin diagram of $\liealg{u}$. Combined with \cite[table I, p.\,505]{Helgason}
	this proves the following well-known
	
	\begin{proposition}\label{prop-rank-of-fixed-point-set-of-automorphism}
		Let $\liealg{u}$ be a simple complex Lie algebra and $\mu\colon\liealg{u}\rightarrow\liealg{u}$ an automorphism of Lie algebras induced by a $k$-th order automorphism of
		the Dynkin diagram of $\liealg{u}$, $k > 1$. Then the isomorphism types of $\liealg{u}$ and of the fixed point set $\liealg{u}_0$ of $\mu$ are contained in the 
		following table:
		\begin{center}
			\begin{tabular}{l|c|c|c|c|c}
				$\liealg{u}$				& $\liealg{a}_{2n}$ ($k = 2$)	& $\liealg{a}_{2n-1}$ ($k=2$)	& $\liealg{d}_n$ ($n \geq 4, k= 2$)	& $\liealg{d}_4$ ($k = 3$)		& $\liealg{e}_6$ ($k=2$)	\\
				\hline
				$\liealg{u}_0$	& $\liealg{b}_n$		& $\liealg{c}_n$			& $\liealg{b}_{n-1}$			& $\liealg{g}_2$		& $\liealg{f}_4$
			\end{tabular}
		\end{center}
		Moreover, the rank of the fixed point set of any non-inner automorphism on $\liealg{u}$ must be equal to the rank of one of these Lie 
		algebras $\liealg{u}_0$.
	\end{proposition}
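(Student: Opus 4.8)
The plan is to reduce everything to the classification of automorphisms of connected Dynkin diagrams and then, for each such automorphism $\nu$ of order $k>1$, to identify the isomorphism type of the fixed subalgebra $\liealg{u}^{\nu}$ by the folding procedure. First I would recall that the only connected Dynkin diagrams admitting a non-trivial automorphism are those of type $\liealg{a}_n$ ($n\geq 2$), $\liealg{d}_n$ ($n\geq 4$) and $\liealg{e}_6$, and that in each of these cases the diagram automorphism group is $\ZZ_2$ except for $\liealg{d}_4$, whose diagram automorphism group is the symmetric group on the three outer nodes and hence contains elements of order $3$. This already forces $k\in\{2,3\}$ and shows that $k=3$ can only occur for $\liealg{u}\cong\liealg{d}_4$, so the first row of the table is exhaustive once one splits type $\liealg{a}$ according to the parity of the rank.

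Next I would compute $\liealg{u}_0=\liealg{u}^{\nu}$ in each of these five cases. Since $\nu$ has finite order it is semisimple, and as it fixes $\lieh$ pointwise the subspace $\lieh^{\nu}$ is a Cartan subalgebra of $\liealg{u}_0$; its dimension equals the number of $\nu$-orbits on $\Pi$, and the simple roots of $\liealg{u}_0$ relative to $\lieh^{\nu}$ are obtained by restricting (equivalently, averaging over $\nu$-orbits) the simple roots of $\liealg{u}$ --- this is the classical folding of the Dynkin diagram, which in particular yields a connected diagram and hence a simple $\liealg{u}_0$. Counting orbits gives $\rank\liealg{u}_0=n,n,n-1,2,4$ in the five cases, and working out the resulting Cartan matrices --- or, for the classical types, using the matrix models identifying $\liealg{u}^{\nu}$ with $\liealg{so}_{2n+1}$, $\liealg{sp}_{2n}$, $\liealg{so}_{2n-1}$ inside $\liealg{sl}_{2n+1}$, $\liealg{sl}_{2n}$, $\liealg{so}_{2n}$ --- identifies $\liealg{u}_0$ with $\liealg{b}_n$, $\liealg{c}_n$, $\liealg{b}_{n-1}$, $\liealg{g}_2$ and $\liealg{f}_4$ respectively. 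The case $\liealg{u}=\liealg{a}_{2n}$ is the only delicate one: there the two simple roots making up the ``central'' $\nu$-orbit are adjacent, so the folded root system is a priori non-reduced, and one has to check directly that the fixed subalgebra is nevertheless the reduced algebra $\liealg{b}_n\cong\liealg{so}_{2n+1}$.

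For the ``Moreover'' clause I would invoke the normal form recalled above: up to conjugation any automorphism $\mu$ of $\liealg{u}$ has the form $c\circ\nu$ with $\nu$ induced by a diagram automorphism, $c$ inner, commuting with $\nu$ and fixing $\lieh$ pointwise, so that $\rank\liealg{u}^{\mu}=\rank\liealg{u}^{\nu}$. If $\mu$ is not inner then $\nu\neq\id$, hence $\nu$ is one of the diagram automorphisms above and $\rank\liealg{u}^{\nu}$ is the rank of one of $\liealg{b}_n$, $\liealg{c}_n$, $\liealg{b}_{n-1}$, $\liealg{g}_2$, $\liealg{f}_4$, as claimed. I expect the only genuine work to be the folding computations --- in particular confirming $\liealg{a}_{2n}\to\liealg{b}_n$ together with the two exceptional identifications $\liealg{d}_4\to\liealg{g}_2$ and $\liealg{e}_6\to\liealg{f}_4$ --- with everything else reducing to a finite inspection; alternatively, all of the above can simply be read off from \cite[table I, p.\,505]{Helgason} and \cite[p.\,106]{GrayWolf-homogeneous-spaces-I}.
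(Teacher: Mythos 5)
Your proposal is correct and follows essentially the same route as the paper, which likewise deduces the rank statement from the normal form $\mu = c\circ\nu$ of \cite[lemma 5.3]{GrayWolf-homogeneous-spaces-I} and reads the table of fixed-point sets off \cite[table I, p.\,505]{Helgason} rather than carrying out the folding by hand. One slip to fix: $\nu$ \emph{preserves} $\lieh$ but does not fix it pointwise (it permutes the simple roots nontrivially); your subsequent use of $\lieh^{\nu}$ as a Cartan subalgebra of $\liealg{u}_0$ is what is actually needed and is correct.
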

	
	For finite-order automorphisms on $\liealg{u}$, we can say even more. Choose a Dynkin diagram automorphism of order $k$ on $\liealg{u}$ and
	let $\nu\colon\liealg{u}\rightarrow\liealg{u}$ be the induced automorphism of Lie algebras. Let $D$ be the diagram in Table $k$ on \cite[p.\,503]{Helgason} corresponding to the 
	isomorphism type of $\liealg{u}$ and let further $\alpha_1, \ldots, \alpha_n$ be simple roots for $\liealg{u}^{\nu}$. There exists a vertex $v \in V(D)$ such that
	the diagram $D - \{v\}$ is connected and equal to the Dynkin diagram of $\liealg{u}^{\nu}$. Let then $a_1, \ldots, a_n$ be the labels of the vertices corresponding to 
	$\alpha_1, \ldots, \alpha_n$ and $a_0$ the remaining label. Every choice of non-negative 
	integers $s_0, \ldots, s_n$ with no nontrivial common factor then defines an automorphism of order $k(a_0s_0 + \ldots + a_ns_n)$ on $\liealg{u}$. 
	If $J \subseteq \{0, \ldots, n\}$ is the set of indices $j$ such $s_j = 0$, then the fixed point set of this automorphism has an $(n - |J|)$-dimensional center, and the semisimple 
	part is the subdiagram of $D$ with vertex set $J$. Up to conjugation, every finite-order automorphism on $\liealg{u}$ arises in this way \cite[theorem X.5.15]{Helgason}.
	If one carries out this procedure for automorphisms of order $2$ one obtains Tables II and III on \cite[pp.\,515]{Helgason}. For the convenience of the reader, we reproduce these tables in
	
	\begin{proposition}\label{prop-classification-of-involutions}
		Let $\mu$ be an involution on the simple complex Lie algebra $\liealg{u}$ and $\liealg{u}_0$ its fixed point set. Then one of the following cases occurs.
		\begin{enumerate}
			\item 
			$\mu$ is an inner automorphism and $\liealg{u}_0$ is semisimple. The isomorphism types of $\liealg{u}$ and $\liealg{u}_0$ are contained in the following table.
			\par\bigskip\begin{raggedleft}
				\begin{tabular}{l|c|c|c|c|c|c|c|c}
					$\liealg{u}$	& $\liealg{b}_n$ 	& $\liealg{c}_n$ 	& $\liealg{d}_n$		& $\liealg{g}_2$		& $\liealg{f}_4$
						& $\liealg{e}_6$		& $\liealg{e}_7$		& $\liealg{e}_8$		\\
						\hline
					$\liealg{u}_0$	& $\liealg{d}_p\oplus\liealg{b}_{n-p}$	& $\liealg{c}_p\oplus\liealg{c}_{n-p}$	 & $\liealg{d}_p\oplus\liealg{d}_{n-p}$
						& $\liealg{a}_1\oplus\liealg{a}_1$	& $\liealg{b}_4$	  & $\liealg{a}_1\oplus\liealg{a}_5$
						& $\liealg{a}_7$	 & $\liealg{a}_1\oplus\liealg{e}_7$	\\
									& & & & & $\liealg{a}_1\oplus\liealg{c}_3$ & & $\liealg{a}_1\oplus\liealg{d}_6$ & $\liealg{d}_8$
				\end{tabular}
			\end{raggedleft}
			\par\bigskip\noindent Here we have $2 \leq p \leq n$ (case $\liealg{b}_n$), $1 \leq p \leq \lfloor \frac{n}{2} \rfloor$ (case $\liealg{c}_n$),
			and $2 \leq p \leq \lfloor \frac{n}{2} \rfloor$ (case $\liealg{d}_n$).
			
			\item
			$\mu$ is an inner automorphism and $\liealg{u}_0$ has a one-dimensional center. The isomorphism types of $\liealg{u}$ and the semisimple part
			$[\liealg{u}_0, \liealg{u}_0]$ are given as follows.
			\par\bigskip\begin{raggedleft}
				\begin{tabular}{l|c|c|c|c|c|c|c}
					$\liealg{u}$	& $\liealg{a}_n$		& $\liealg{b}_n$ ($n > 2$)	& $\liealg{c}_n$ ($n > 1$)	& $\liealg{d}_4$	& $\liealg{d}_n$ ($n > 4$)	& $\liealg{e}_6$		& $\liealg{e}_7$		\\
					\hline
					$[\liealg{u}_0, \liealg{u}_0]$		& $\liealg{a}_p\oplus\liealg{a}_{n-p-1}$	& $\liealg{b}_{n-1}$		& $\liealg{a}_{n-1}$
						& $\liealg{a}_3$		& $\liealg{d}_{n-1}$		& $\liealg{d}_5$		& $\liealg{e}_6$ \\
													& & & & & $\liealg{a}_{n-1}$
				\end{tabular}	
			\end{raggedleft}
			\par\bigskip\noindent The possible value of $p$ are $0 \leq p \leq \lfloor\frac{n-1}{2}\rfloor$.
			
			\item 
			$\mu$ is not an inner automorphism and $\liealg{u}_0$ is semisimple. The types of $\liealg{u}$ and $\liealg{u}_0$ are 
			\par\bigskip\begin{raggedleft}
				\begin{tabular}{l|c|c|c|c}
					$\liealg{u}$		&	$\liealg{a}_{2n}$ ($n \geq 2$)	& $\liealg{a}_{2n-1}$ ($n > 2$)	& $\liealg{d}_{n+1}$ ($n > 1$)	& $\liealg{e}_6$ 	\\
					\hline
					$\liealg{u}_0$	& $\liealg{b}_n$		& $\liealg{d}_n$		& $\liealg{b}_p\oplus\liealg{b}_{n-p}$	& $\liealg{c}_4$		\\
									&					& $\liealg{c}_n$		&										& $\liealg{f}_4$
				\end{tabular}
			\end{raggedleft}
			\par\bigskip\noindent and $0 \leq p \leq \lfloor \frac{n}{2} \rfloor$.
		\end{enumerate}
	\end{proposition}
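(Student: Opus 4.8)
The statement is nothing but Tables II and III of \cite[Ch.\,X]{Helgason}, and the plan is to derive it by specializing the general description of conjugacy classes of finite-order automorphisms recalled just above to order $2$. Concretely, by \cite[theorem X.5.15]{Helgason} an automorphism of order $m$ of the simple complex Lie algebra $\liealg{u}$ is, up to conjugation, obtained from a divisor $k \mid m$, a Dynkin-diagram automorphism $\nu$ of order $k$ with associated diagram $D$ from Table $k$ of \cite[p.\,503]{Helgason} carrying labels $a_0, \dots, a_n$, and a tuple $(s_0, \dots, s_n)$ of non-negative integers with $\gcd(s_0, \dots, s_n) = 1$ subject to $m = k(a_0 s_0 + \dots + a_n s_n)$; the fixed point set then has a center of dimension $\#\{\, i : s_i \neq 0 \,\} - 1$ and semisimple part equal to the subdiagram of $D$ supported on the vertices with $s_i = 0$.

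Here $k$ divides $m = 2$, so only $k = 1$ and $k = 2$ occur; in particular no order-$3$ diagram automorphism (the triality of $\liealg{d}_4$) enters, and a non-inner involution has the form $c \circ \nu$ with $\nu$ induced by an order-$2$ diagram automorphism, which by \cref{prop-rank-of-fixed-point-set-of-automorphism} restricts $\liealg{u}$ to type $\liealg{a}_n$ ($n \geq 2$), $\liealg{d}_n$, or $\liealg{e}_6$. In the inner case $k = 1$ I would use that $D$ is the extended Dynkin diagram $\tilde{\liealg{u}}$, with $a_0 = 1$ and the remaining $a_i$ the marks of the highest root, and then solve $a_0 s_0 + \dots + a_n s_n = 2$ under the coprimality condition: this leaves exactly two types of solution, namely a single $s_j = 1$ sitting at a vertex of mark $2$ (all other $s_i = 0$), which yields a semisimple fixed point set equal to $\tilde{\liealg{u}}$ with that mark-$2$ vertex removed; or two indices $j \neq l$ with $s_j = s_l = 1$ at vertices of mark $1$, which yields a fixed point set with one-dimensional center and semisimple part $\tilde{\liealg{u}}$ with two mark-$1$ vertices removed. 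Running over the simple types and the admissible vertex deletions then reproduces parts (1) and (2), the stated ranges of $p$ accounting for the symmetries of $\tilde{\liealg{u}}$ that identify conjugate choices.

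In the non-inner case $k = 2$ the diagram $D$ is the twisted affine diagram attached to $\liealg{u}$ in Table $2$ of \cite[p.\,503]{Helgason} with the labels $a_i$ listed there, and the defining equation becomes $a_0 s_0 + \dots + a_n s_n = 1$; hence exactly one $s_j$ equals $1$, necessarily at a vertex of mark $1$, and all other $s_i$ vanish. Thus the fixed point set is always semisimple and equals the connected subdiagram $D - \{v\}$ for $v$ a mark-$1$ vertex of $D$, and enumerating these for the three admissible series yields part (3).

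The entire argument is therefore a finite and classical bookkeeping exercise: one reads off the marks on the extended and twisted affine Dynkin diagrams, decides which vertex deletions produce isomorphic subdiagrams — this is what pins down the precise bounds such as $2 \leq p \leq n$ or $1 \leq p \leq \lfloor n/2\rfloor$ — and records the sporadic coincidences among the exceptional types. I expect no genuine obstacle beyond this bookkeeping, since the content of the proposition coincides with that of the cited tables.
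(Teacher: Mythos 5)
Your proposal is correct and follows essentially the same route as the paper, which derives the proposition from the Kac classification of finite-order automorphisms (\cite[theorem X.5.15]{Helgason}) specialized to order $2$ and simply reproduces Tables II and III of \cite[pp.\,515]{Helgason}. The bookkeeping you describe --- solving $k(a_0s_0+\ldots+a_ns_n)=2$ on the affine and twisted affine diagrams and identifying conjugate vertex deletions --- is exactly the procedure the paper outlines in the paragraph preceding the statement.
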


	\subsection{Equivariant cohomology}
	Let $G$ be a Lie group and fix a contractible space $EG$ on which $G$ acts freely. To any $G$-space $(X, \rho)$, that is, to any toplogical space $X$ endowed with a fixed 
	continuous $G$-action $\rho$, we may associate the \emph{(Borel) equivariant cohomology}, which by definition is the graded group
	\begin{align*}
		H^{\ast}_{G}(X; R) := H^{\ast}(EG\times_{(G,\rho)}X; R).
	\end{align*}
	In other words, the Borel equivariant cohomology is the (singular) cohomology of the orbit space $EG\times_{(G,\rho)} X$ of the diagonal action of $G$ on $EG\times X$.
	
	Let now $R = \RR$ be the real numbers. This is the coefficient group that we will be mostly interested in and which we will thus from now on supress in the notation. Note that 
	$H_G^{\ast}(X) = H_G^{\ast}(X; \RR)$ is a ring via the cup product and that the assignment $X \mapsto H_G^{\ast}(X)$ is functorial. In particular, any map $X \rightarrow \{\ast\}$ 
	induces a morphism of rings $H_G^{\ast}(\ast) \rightarrow H_G^{\ast}(X)$ via which $H_G^{\ast}(X)$ becomes a $H_G^{\ast}(\ast)$-module. If $H_G^{\ast}(X)$ happens to be a 
	free $H_G^{\ast}(\ast)$-module, then we say that the $G$-action on $X$ is \emph{equivariantly formal}.

	There are various other equivalent ways to describe equivariantly formal actions, cf.\,for example \cite[section 7]{GoertschesZoller} or
	\cite{CarlsonFok} for the special case of isotropy actions (note that in \cite{CarlsonFok} 
	rational coefficients are used, but that this does not affect the question of equivariant formality, see \cite[corollary 2.5, proposition 2.7]{GoertschesMare-hyperpolar}). We will 
	mostly use two criteria which are valid in the context of homogeneous spaces, the first one being 
	a fairly straightforward consequence of \cite[p.\,46]{Hsiang}:
	
	\begin{proposition}\label{pop-formality-betti-numbers}
		Let $G$ be a compact, connected Lie group and $K$ a closed, connected subgroup. For any maximal torus $S \subseteq K$ we have
		\begin{align*}
			2^{\rank G - \rank K}\cdot \dim H^0\left(\tfrac{N_G(S)}{N_K(S)}\right) &\leq \dim H^{\ast}(G/K)
		\end{align*}
		with equality if and only if $(G, K)$ is isotropy formal.
	\end{proposition}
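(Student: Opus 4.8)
The plan is to express both sides of the claimed inequality in terms of the fixed point set $(G/K)^S$ of the torus $S$ acting on $G/K$ by left translation, and then to invoke the classical inequality between the total Betti number of a space and that of the fixed point set of a torus acting on it.

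I would first identify $(G/K)^S$. A coset $gK$ is fixed by $S$ exactly when $g^{-1}Sg \subseteq K$; since $\dim S = \rank K$, the torus $g^{-1}Sg$ is then a maximal torus of $K$, hence $K$-conjugate to $S$, so $g \in N_G(S)K$. Thus $(G/K)^S = N_G(S)K/K$, which as a homogeneous space is homeomorphic to $N_G(S)/N_K(S)$, because $N_G(S)\cap K = N_K(S)$. To compute the total Betti number of $Y := N_G(S)/N_K(S)$, recall that the centralizer of a torus in a compact connected Lie group is connected, so $N_G(S)^0 = Z_G(S)$; this subgroup acts transitively on the finite set of components of $Y$, so all components are homeomorphic and $\dim H^*(Y) = \dim H^0(Y)\cdot\dim H^*(C)$ for $C$ the component of the base point. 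Here $C = Z_G(S)/(Z_G(S)\cap N_K(S))$, and $Z_G(S)\cap N_K(S) = Z_K(S) = S$ since $S$ is a maximal torus of $K$. As $S$ is central in $Z_G(S)$, the quotient $Z_G(S)/S$ is a compact connected Lie group of rank $\rank Z_G(S) - \dim S = \rank G - \rank K$, the first equality because $Z_G(S)$ contains any maximal torus of $G$ through $S$; hence $\dim H^*(C) = 2^{\rank G - \rank K}$ (a compact connected Lie group of rank $r$ being rationally a product of $r$ odd-dimensional spheres). Putting this together,
\[
	\dim H^*\big((G/K)^S\big) \;=\; 2^{\rank G - \rank K}\cdot\dim H^0\big(N_G(S)/N_K(S)\big).
\]

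It then remains to combine this with \cite[p.\,46]{Hsiang}: as $G/K$ is a compact manifold, $\dim H^*((G/K)^S) \le \dim H^*(G/K)$, with equality if and only if the $S$-action on $G/K$ is equivariantly formal, and the latter is equivalent, with real coefficients, to isotropy formality of $(G,K)$. (One direction of this last equivalence follows by passing to $N_K(S)/S$-invariants together with Chevalley's theorem that $H^*(BS)$ is free over $H^*(BK) = H^*(BS)^{N_K(S)/S}$; the other from the Leray--Hirsch theorem applied to the fibre bundle with fibre $K/S$ obtained from $ES\times_S(G/K)\to EK\times_K(G/K)$, the fibre having vanishing odd cohomology.) As the surrounding discussion already indicates, there is no serious obstacle here; the only points needing care are the two group-theoretic identities $(G/K)^S = N_G(S)K/K$ and $Z_G(S)\cap N_K(S) = S$, and keeping the chain of equivariant-formality equivalences straight.
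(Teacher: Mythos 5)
Your argument is correct and is exactly the derivation the paper leaves implicit: the paper states this proposition without proof as a ``fairly straightforward consequence'' of the localization inequality on p.\,46 of Hsiang, and your route --- identifying $(G/K)^S$ with $N_G(S)/N_K(S)$, computing the total Betti number of each component as $2^{\rank G - \rank K}$ via $Z_G(S)/S$, and then invoking $\dim H^{\ast}(X^S)\leq\dim H^{\ast}(X)$ with equality precisely in the equivariantly formal case together with the equivalence of $S$- and $K$-equivariant formality --- is the intended one. One small wording slip: it is $N_G(S)$ (equivalently its component group), not the identity component $Z_G(S)=N_G(S)^0$, that acts transitively on the set of components of $N_G(S)/N_K(S)$; the $Z_G(S)$-orbits \emph{are} the components, and they are mutually homeomorphic via conjugation by elements of $N_G(S)$, so the count $\dim H^{\ast}(Y)=\dim H^0(Y)\cdot\dim H^{\ast}(C)$ is unaffected.
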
 

	The second criterion is very useful for passing between different subgroups of $G$. It allows us to deduce
	equivariant formality of a pair $(G, K)$ from that of a pair $(G, H)$, provided that $K$ and $H$ share a common maximal torus.
	
	\begin{proposition}[{\cite[theorem 1.1]{Carlson-torus}}]\label{prop-formality-depends-on-torus}
		Let $G$ be a compact, connected Lie group, $K$ a closed connected subgroup, and $S \subseteq K$ maximal torus. Then
		$(G, K)$ is isotropy formal if and only if $(G, S)$ is isotropy formal.
	\end{proposition}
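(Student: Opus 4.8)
The plan is to obtain both implications simultaneously from the Betti-number criterion of \cref{pop-formality-betti-numbers}, applied once to $(G,K)$ and once to $(G,S)$ (the latter being legitimate since $S$ is a closed connected subgroup). For the pair $(G,S)$ one has $N_S(S)=S$, so the criterion reads: $(G,S)$ is isotropy formal if and only if $2^{\rank G-\rank S}\dim H^0(N_G(S)/S)=\dim H^*(G/S)$. Since $S$ is a maximal torus of $K$ we have $\rank S=\rank K$, so both criteria carry the same prefactor $2^{\rank G-\rank K}$. Writing $w:=|N_K(S)/S|$ for the order of the Weyl group of $K$, it therefore suffices to establish the two identities
\begin{equation*}
	\dim H^*(G/S)=w\cdot\dim H^*(G/K)\qquad\text{and}\qquad \dim H^0(N_G(S)/S)=w\cdot\dim H^0(N_G(S)/N_K(S)).
\end{equation*}
Indeed, granting these, the difference between the two sides of the inequality in \cref{pop-formality-betti-numbers} for $(G,S)$ equals exactly $w$ times the corresponding difference for $(G,K)$, so the former vanishes if and only if the latter does.

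The first identity is the cohomological triviality, over $\RR$, of the flag bundle $K/S\to G/S\to G/K$, and this is the step I expect to be the main obstacle. One way to proceed: this bundle is associated to the principal $K$-bundle $K\to G\to G/K$ via the left-translation action of $K$ on $K/S$, so its structure group is the connected group $K$ and hence $\pi_1(G/K)$ acts trivially on $H^*(K/S;\RR)$. Let $G/S\to BS$ classify the principal bundle $S\to G\to G/S$; its restriction to the fibre $K/S$ classifies $S\to K\to K/S$, so the composite $H^*(BS)\to H^*(G/S)\to H^*(K/S)$ is the characteristic map. By Borel's theorem $H^*(K/S;\RR)$ is generated by its degree-two part, which is the image of the characters of $S$; hence the composite, and a fortiori $H^*(G/S)\to H^*(K/S)$, is surjective. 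Leray--Hirsch now gives $\dim H^*(G/S)=\dim H^*(G/K)\cdot\dim H^*(K/S)$, and $\dim H^*(K/S;\RR)=w$, again by Borel. (Alternatively, one may simply quote the well-known factorization $H^*(G/S;\RR)\cong H^*(G/K;\RR)\otimes H^*(K/S;\RR)$ as vector spaces.)

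The second identity is a routine component count using the standard structure theory of normalizers of tori. Since the centralizer of a torus in a compact connected Lie group is connected, $Z_G(S)$ is connected; being of finite index in $N_G(S)$ it coincides with $(N_G(S))_0$, and as $S\subseteq Z_G(S)$ this gives $\pi_0(N_G(S)/S)=N_G(S)/Z_G(S)$, a finite group. Moreover $Z_G(S)$ is normal in $N_G(S)$, so the connected subgroup $Z_G(S)$ acts on $N_G(S)/N_K(S)$ with orbits precisely the connected components, whence $\pi_0(N_G(S)/N_K(S))=(N_G(S)/Z_G(S))/\overline{N_K(S)}$, where $\overline{N_K(S)}$ denotes the image of $N_K(S)$. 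That image equals $N_K(S)/(N_K(S)\cap Z_G(S))=N_K(S)/Z_K(S)=N_K(S)/S$, of order $w$, because a maximal torus of a compact connected group is self-centralizing. Counting orbits therefore yields $\dim H^0(N_G(S)/S)=|N_G(S)/Z_G(S)|=w\cdot\dim H^0(N_G(S)/N_K(S))$, and combining the two identities with \cref{pop-formality-betti-numbers} as above completes the proof.
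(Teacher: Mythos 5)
Your argument is correct. Note, however, that the paper does not prove this statement at all: it is quoted verbatim as \cite[theorem 1.1]{Carlson-torus}, so there is no in-paper proof to compare against. What you have written is an independent derivation from the Hsiang-type Betti-number criterion (\cref{pop-formality-betti-numbers}), applied once to $(G,K)$ and once to $(G,S)$, and both of your intermediate identities check out: the factorization $\dim H^{\ast}(G/S)=|W(K,S)|\cdot\dim H^{\ast}(G/K)$ follows exactly as you say from Borel's description of $H^{\ast}(K/S;\RR)$, the surjectivity of the characteristic map, and Leray--Hirsch (your remark that the structure group $K$ is connected, so the local system is trivial, closes the only potential gap there); and the component count $\dim H^0(N_G(S)/S)=|W(K,S)|\cdot\dim H^0(N_G(S)/N_K(S))$ is correctly reduced to the connectedness of $Z_G(S)$, its normality in $N_G(S)$, and the self-centralizing property $Z_K(S)=S$ of a maximal torus. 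Since both sides of the $(G,S)$-inequality are exactly $|W(K,S)|$ times those of the $(G,K)$-inequality, equality holds in one if and only if it holds in the other, which is the claim. This is a genuinely different route from Carlson's original proof, which works directly with the equivariant cohomology rings (passing through $H^{\ast}_S(G/S)$ and Weyl-group invariants to transfer freeness between the $H^{\ast}(BK)$- and $H^{\ast}(BS)$-module structures); your approach buys a short, self-contained proof from material already stated in \cref{sec-preliminaries}, at the cost of relying on the unproved equality case of \cref{pop-formality-betti-numbers}.
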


	Recall that a fibration is totally nonhomologous to zero if the inclusion of the fiber into the total space induces a surjection on cohomology.
	For a closed connected subgroup $K$ of a compact connected Lie group $G$ and the fibration $K \hookrightarrow G \rightarrow G/K$ one
	can equivalently demand that restriction induces a surjection from the space $P_{\lieg}$ of invariant polynomials on $\lieg$ onto
	the space $P_{\liek}$ of invariant polynomials on $\liek$ \cite[theorem VI, section 11.6]{GreubHalperinVanstone}, and in this case one also
	says that $K$ is totally nonhomologous to zero in $G$. A classical result states that $G/K$ then is formal in the sense of Rational Homotopy Theory
	\cite[corollary I, section 10.19]{GreubHalperinVanstone}, and it turns out that this condition is also sufficient for $(G, K)$ to be isotropy formal.
	
	\begin{proposition}[{\cite[corollary 4.2]{Shiga}}]\label{prop-tnhz-implies-isotropy-formality}
		Let $K$ be a compact, connected subgroup of a compact, connected Lie group $G$. If $K$ is totally nonhomologous to zero in $G$, then the pair $(G, K)$ is isotropy formal.
	\end{proposition}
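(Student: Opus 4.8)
The plan is to compute the Borel construction of the isotropy action directly and to recognise it as a free module over $H^{\ast}(BK)$. The geometric starting point is the $G$-equivariant homeomorphism
\[
	G\times_K(G/K)\ \xrightarrow{\ \cong\ }\ (G/K)\times(G/K),\qquad [g,xK]\longmapsto(gK,gxK),
\]
in which $K$ acts on $G$ by right translations and on $G/K$ by the isotropy action in order to form the balanced product, $G$ acts by left translations on the $G$-factor, and $G$ acts diagonally on the target. Applying $EG\times_G(-)$ and using the standard homotopy equivalences $EG\times_G(X\times Y)\simeq(EG\times_G X)\times_{BG}(EG\times_G Y)$, $EG\times_G(G\times_K Y)\simeq EG\times_K Y\simeq EK\times_K Y$ and $EG\times_G(G/K)\simeq BK$, one obtains
\[
	EK\times_K(G/K)\ \simeq\ BK\times_{BG}BK,
\]
and a short diagram chase shows that under this equivalence the structure map $EK\times_K(G/K)\to EK\times_K\ast=BK$ corresponds to the first projection $BK\times_{BG}BK\to BK$. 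Thus the task reduces to showing that $H^{\ast}(BK\times_{BG}BK)$ is a free $H^{\ast}(BK)$-module via the first projection.

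Next I would feed in the hypothesis. By \cite[theorem VI, section 11.6]{GreubHalperinVanstone}, $K$ being totally nonhomologous to zero in $G$ is equivalent to surjectivity of the restriction map $\rho\colon P_{\lieg}\to P_{\liek}$, which under the Chern--Weil isomorphisms $H^{\ast}(BG)\cong P_{\lieg}$, $H^{\ast}(BK)\cong P_{\liek}$ is the map induced by $K\hookrightarrow G$. Since $P_{\lieg}$ and $P_{\liek}$ are polynomial algebras on $\rank G$ resp.\ $\rank K$ homogeneous (even-degree) generators, and the quotient $P_{\lieg}/\ker\rho\cong P_{\liek}$ is again a polynomial ring, the ideal $I=\ker\rho$ is a complete intersection: it is generated by a regular sequence $f_1,\dots,f_r\in P_{\lieg}^{+}$ of length $r=\rank G-\rank K$. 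Hence the Koszul complex $\bigl(P_{\lieg}\otimes\Lambda(z_1,\dots,z_r),D\bigr)$ with $Dz_i=f_i$ and $\deg z_i=\deg f_i-1$ (odd) is a relative Sullivan model for $BK\to BG$, whose fibre is $G/K$.

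The first projection $BK\times_{BG}BK\to BK$ is the pullback of $BK\to BG$ along the same map $BK\to BG$, whose model is $\rho\colon(P_{\lieg},0)\to(P_{\liek},0)$. Pulling the relative model above back along $\rho$ yields the relative model $\bigl(P_{\liek}\otimes\Lambda(z_1,\dots,z_r),D'\bigr)$ over $(P_{\liek},0)$ with $D'z_i=\rho(f_i)=0$. Consequently
\[
	H_K^{\ast}(G/K)\ \cong\ H^{\ast}(BK\times_{BG}BK)\ \cong\ P_{\liek}\otimes\Lambda(z_1,\dots,z_r)
\]
as graded modules over $P_{\liek}=H_K^{\ast}(\ast)$, which is manifestly free; hence $(G,K)$ is isotropy formal. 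Along the way one also recovers $H^{\ast}(G/K)\cong\Lambda(z_1,\dots,z_r)$, so that $\dim H^{\ast}(G/K)=2^{\rank G-\rank K}$ and $G/K$ is formal.

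I anticipate two points needing care. First, the chain of homotopy equivalences leading to $EK\times_K(G/K)\simeq BK\times_{BG}BK$ must be made honest at the level of the $H^{\ast}(BK)$-module structure, i.e.\ one must track which of the two projections induces it; this is bookkeeping rather than a genuine difficulty. Second, one relies on the commutative-algebra fact that a graded surjection of polynomial rings has a complete intersection kernel — this is exactly what forces the pulled-back differential to vanish. Equivalently one may argue via the Eilenberg--Moore spectral sequence of the pullback square, whose $E_2$-term is $\operatorname{Tor}_{P_{\lieg}}(P_{\liek},P_{\liek})\cong P_{\liek}\otimes\Lambda(z_1,\dots,z_r)$, an algebra generated over $P_{\liek}$ by classes in homological degree $-1$ and therefore collapsing at $E_2$; I would use whichever bookkeeping turns out cleaner. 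A route through \cref{pop-formality-betti-numbers}, reducing isotropy formality to the connectedness of $N_G(S)/N_K(S)$ and using $\dim H^{\ast}(G/K)=2^{\rank G-\rank K}$, seems to trade the above for an equally delicate statement about relative Weyl groups, so I would not pursue it.
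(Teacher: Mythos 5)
Your argument is correct. Note, however, that the paper does not prove this proposition at all: it is quoted verbatim from \cite[corollary 4.2]{Shiga}, so there is no in-paper proof to compare against. What you have written is essentially a self-contained reconstruction of the standard (and, in substance, Shiga's) argument: identify $EK\times_K(G/K)$ with the homotopy fibre product $BK\times_{BG}BK$ via $G\times_K(G/K)\cong(G/K)\times(G/K)$, and then exploit that a graded surjection of polynomial algebras $P_{\lieg}\twoheadrightarrow P_{\liek}$ has kernel generated by a regular sequence, so that the Koszul model pulls back to $\bigl(P_{\liek}\otimes\Lambda(z_1,\dots,z_r),0\bigr)$. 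The two points you flag are indeed the only ones needing care, and both go through: the structure map does correspond to the first projection (a direct check on representatives), and the complete-intersection property of the kernel follows from regularity of $P_{\liek}$ together with graded Nakayama. The only stylistic mismatch with the surrounding paper is that the author elsewhere prefers to detect isotropy formality through \cref{pop-formality-betti-numbers}; your route avoids that entirely and in addition recovers $H^{\ast}(G/K)\cong\Lambda(z_1,\dots,z_r)$ and the formality of $G/K$, which the citation-based treatment does not make explicit.
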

	
	Given an automorphism $\sigma\colon G\rightarrow G$ one a simple Lie group $G$ one can always find an automorphism $\tau$ which is induced by an automorphism of
	the Dynkin diagram and such that $K = G^{\sigma}$ and $H = G^{\tau}$ share a maximal torus. This implies that $(G, K)$ is isotropy formal, because of 
	\cref{prop-formality-depends-on-torus} and the following 
	
	\begin{proposition}[{\cite[section 4]{Goertsches-automorphisms}}]\label{prop-dynkin-diagram-auto-tnhz}
		Let $G$ be a compact, connected, simple Lie group and $\mu\colon G\rightarrow G$ an automorphism induced by an automorphism of the Dynkin
		diagram of $G$. Then $K = (G^{\mu})_0$ is totally nonhomologous to zero in $G$.
	\end{proposition}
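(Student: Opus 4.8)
The plan is to establish the equivalent condition, recalled above from \cite[theorem VI, section 11.6]{GreubHalperinVanstone}, that restriction induces a surjection $P_\lieg\to P_\liek$ of the algebras of invariant polynomials. An automorphism of $G$ induced by an automorphism of the Dynkin diagram is either trivial — in which case $K=G$ and there is nothing to show — or has order $2$ or $3$, the order-$3$ case forcing $G$ to be of type $\liealg{d}_4$; by \cref{prop-rank-of-fixed-point-set-of-automorphism} it therefore suffices to treat the five pairs $(\lieg,\liek=\lieg^\mu)$ occurring in the table stated there. Fixing a $\mu$-invariant maximal torus $T\subseteq G$ with Lie algebra $\liet$, a maximal torus $S\subseteq K$ with Lie algebra $\lies\subseteq\liet$, and the Weyl groups $W=W(G,T)$ and $W_K=W(K,S)$, Chevalley's restriction theorem identifies $P_\lieg$ with $\RR[\liet]^W$, identifies $P_\liek$ with $\RR[\lies]^{W_K}$, and turns the restriction map into $f\mapsto f|_{\lies}$. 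It thus remains to show that every $W_K$-invariant polynomial on $\lies$ is the restriction of a $W$-invariant polynomial on $\liet$.

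For $G$ of type $\liealg{a}_n$ or $\liealg{d}_n$ this can be checked by hand. Writing $\liet$ as $\{x\in\CC^{n+1}\colon\sum_i x_i=0\}$ or as $\CC^n$, the automorphism $\mu$ acts, in suitable coordinates, by reversing the coordinates up to a sign, respectively by a single sign change $x_n\mapsto -x_n$, so that $\lies$ is the vanishing locus of certain coordinates. Now $\RR[\liet]^W$ is generated by the power sums $p_k=\sum_i x_i^k$ when $\lieg$ is of type $\liealg{a}_n$, and by the power sums $\sum_i x_i^{2k}$ together with the product $x_1\cdots x_n$ when $\lieg$ is of type $\liealg{d}_n$. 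Restricting these to $\lies$, the odd power sums, respectively the product $x_1\cdots x_n$, vanish, while the remaining generators restrict to nonzero multiples of the power sums (of squares) in the surviving coordinates, and these generate $\RR[\lies]^{W_K}$, the group $W_K$ being of type $\liealg{b}$ or $\liealg{c}$. This disposes of these cases.

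The main obstacle is the two remaining pairs $(\liealg{e}_6,\liealg{f}_4)$ and $(\liealg{d}_4,\liealg{g}_2)$, the latter arising from triality, where an argument with power sums is no longer available; for these I would reason indirectly with the image $I\subseteq P_\liek$ of the restriction map. Since $I$ is generated by the restrictions of the positive-degree invariants of $\lieg$, which cut out the nilpotent cone, and since the nilpotent cone meets $\liet$ — hence $\lies$ — only in the origin, the positive-degree part of $I$ has no common zero besides the origin; therefore $P_\liek$ is a finite module over $I$ and $I$ has Krull dimension $\rank K$. On the other hand $I$ is generated by the restrictions of a system of homogeneous generators of $P_\lieg$, whose degrees are the classical degrees of basic invariants — $2,5,6,8,9,12$ for $\liealg{e}_6$ and $2,4,4,6$ for $\liealg{d}_4$. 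A generator of degree $d$ with $P_\liek$ vanishing in degree $d$ restricts to zero, so the degree-$5$ and degree-$9$ generators of $\liealg{e}_6$ disappear; and the degree-$4$ generators of $\liealg{d}_4$ restrict to multiples of the square of the restriction of the Killing form, hence lie in the subalgebra generated by the remaining restrictions. It follows that $I$ is generated by at most $\rank K$ homogeneous elements, whose degrees are among the basic degrees of $\liek$ ($2,6,8,12$ for $\liealg{f}_4$; $2,6$ for $\liealg{g}_2$); since $\dim I=\rank K$, these must in fact be $\rank K$ algebraically independent homogeneous elements whose degrees are exactly the basic degrees of $\liek$. A short Hilbert series argument then finishes the proof: such elements form a homogeneous system of parameters in the polynomial ring $P_\liek$, so $P_\liek$ is free over the polynomial subring they generate, and comparing Hilbert series (the degrees coincide) forces this subring to equal $P_\liek$. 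The parts of this that I expect to be most delicate are the structural inputs used above — that $\lies$ is a Cartan subalgebra of $\liek$, so that Chevalley's theorem applies to $K$ with Weyl group $W_K$, and that the nilpotent cone intersects $\liet$ only in the origin — together with the bookkeeping of the degrees of basic invariants.
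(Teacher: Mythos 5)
Your argument is correct, but it cannot be compared line by line with anything in the paper: \cref{prop-dynkin-diagram-auto-tnhz} is imported verbatim from \cite[section 4]{Goertsches-automorphisms}, and the paper supplies no proof of its own. What you have written is a self-contained proof of the cited fact, and it is essentially the standard one: reduce via \cite[theorem VI, section 11.6]{GreubHalperinVanstone} and the Chevalley restriction theorem to the surjectivity of $\RR[\liet]^{W}\rightarrow\RR[\lies]^{W_K}$, dispose of the classical pairs by restricting power sums, and handle $(\liealg{e}_6,\liealg{f}_4)$ and $(\liealg{d}_4,\liealg{g}_2)$ by the Krull-dimension and Hilbert-series bookkeeping with the basic degrees $2,5,6,8,9,12$ versus $2,6,8,12$ and $2,4,4,6$ versus $2,6$. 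All the individual steps check out: the common zero locus of the positive-degree restrictions is the origin (your nilpotent-cone argument works, though it is even simpler to note that $\CC[\lies]$ is always integral over the image since it is integral over $\CC[\liet]^{W}|_{\lies}$'s ambient invariants), so the image $I$ has Krull dimension $\rank K$ and the surviving generators form a homogeneous system of parameters with the right degrees, whence freeness of the Cohen--Macaulay ring $P_{\liek}$ over $I$ and the Hilbert-series comparison force $I=P_{\liek}$. The two inputs you rightly flag as delicate are both standard for diagram automorphisms and should be cited rather than reproved: that $\liet^{\mu}$ is a Cartan subalgebra of $\lieg^{\mu}$ (so that $\lies$ may be taken inside $\liet$) follows from \cite[lemma X.5.3]{Helgason} together with the explicit description of $\lieg^{\mu}$ in \cite[section X.5]{Helgason}, and the Chevalley isomorphism $P_{\liek}\cong\RR[\lies]^{W_K}$ needs only that $K$ is compact and connected, which holds by definition of $K=(G^{\mu})_0$. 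The one presentational gap is in the classical cases: you should verify (or cite the folding construction for) the identification of $W_K$ acting on $\lies$ with the signed permutation group in the coordinates $y_1,\ldots,y_m$ you introduce, since the power-sum computation silently uses it. With those references supplied, your proof is a complete and acceptable substitute for the citation.
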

	
	Finally, to put \cref{thm-main} into perspective, we mention the following immediate consequence of equivariant formality of isotropy actions of 
	$\ZZ_{\ell}$--symmetric spaces proved in \cite{Goertsches-automorphisms}. 
	
	\begin{proposition}
		Let $G$ be a compact, connected Lie group and $\Gamma = \ZZ_p\times \ZZ_q$, $\Gamma \subseteq \Aut(G)$,
		a subgroup with $(p, q) = 1$. Then the pair $(G, K)$ with $K = (G^{\Gamma})_0$ is isotropy formal.
	\end{proposition}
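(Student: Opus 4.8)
The plan is to observe that the coprimality hypothesis $(p,q)=1$ forces $\Gamma$ to be cyclic, so that $G/K$ is in fact an ordinary $\ZZ_{\ell}$-symmetric space, and then to invoke the known equivariant formality of isotropy actions of such spaces.

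Concretely: since $(p,q)=1$, the abstract group $\ZZ_p\times\ZZ_q$ is isomorphic to $\ZZ_{pq}$, hence cyclic; therefore the subgroup $\Gamma\subseteq\Aut(G)$ is generated by a single automorphism $\sigma$ of $G$, of some order $\ell$ dividing $pq$. As every element of $\Gamma$ is then a power of $\sigma$, one has $G^{\Gamma}=G^{\sigma}$, so $K=(G^{\Gamma})_0=(G^{\sigma})_0$ is the identity component of the fixed point set of the single finite-order automorphism $\sigma$; in other words, $G/K$ is a $\ZZ_{\ell}$-symmetric space in the sense of \cite{Lutz}.

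It then suffices to recall that the isotropy action of any $\ZZ_{\ell}$-symmetric space is equivariantly formal, which is \cite{Goertsches-automorphisms}. For completeness one can spell out the mechanism: after the reduction to simple groups of \cref{sec-reduction} (which applies to the cyclic group $\Gamma$ just as to $\ZZ_2\times\ZZ_k$), one may assume $G$ simple. One then picks an automorphism $\tau$ of $G$ that is induced by an automorphism of the Dynkin diagram of $G$ and such that $H=(G^{\tau})_0$ and $K=(G^{\sigma})_0$ share a maximal torus $S$; by \cref{prop-dynkin-diagram-auto-tnhz} the subgroup $H$ is totally nonhomologous to zero in $G$, hence $(G,H)$ is isotropy formal by \cref{prop-tnhz-implies-isotropy-formality}, and two applications of \cref{prop-formality-depends-on-torus}, passing from $(G,H)$ to $(G,S)$ and then from $(G,S)$ to $(G,K)$, yield that $(G,K)$ is isotropy formal.

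There is essentially no obstacle here: the whole argument rests on the elementary remark that a subgroup of $\ZZ_p\times\ZZ_q$ with $(p,q)=1$ is cyclic, after which the statement is a direct consequence of results already recorded. The only point deserving a line of justification is that the reduction of \cref{sec-reduction} does not use anything about the structure of $\Gamma$ beyond its being a finitely generated abelian group of automorphisms of $G$, which is indeed the case.
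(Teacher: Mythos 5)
Your proof is correct and follows exactly the paper's own argument: the coprimality hypothesis gives $\Gamma \cong \ZZ_{pq}$, so $K$ is the identity component of the fixed point set of a single finite-order automorphism, and the statement reduces to the known equivariant formality of isotropy actions of $\ZZ_{\ell}$-symmetric spaces from \cite{Goertsches-automorphisms}. The extra paragraph spelling out the mechanism behind that citation is harmless but not needed, since the cited theorem already covers arbitrary compact connected Lie groups.
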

	\begin{proof}
		This follows readily from \cite[theorem 1.1]{Goertsches-automorphisms}, because $\Gamma \cong \ZZ_{pq}$.
	\end{proof}

	\section{From arbitrary quotients to quotients of simple Lie groups}\label{sec-reduction}
	
	Our first goal will be to show that in order to prove \cref{thm-main} it suffices to consider homogeneous spaces in which the the nominator
	is a simple Lie group. Thus, assume that \cref{thm-main} is true for all $\ZZ_2\times\ZZ_k$-symmetric
	spaces $G/K$ in which $G$ is a compact connected simple Lie group. We claim that under this assumption also for every subgroup $\Gamma \subseteq \ZZ_2\times\ZZ_k$
	and for every compact connected simple Lie group $L$ on which $\Gamma$ acts via automorphisms the pair $(L, U)$, with $U$ the identity component of the fixed point set of 
	$\Gamma$, is isotropy formal. This will be a consequence of equivariant formality of isotropy actions of $\ZZ_{\ell}$-symmetric spaces 
	(\cite[theorem 1.1]{Goertsches-automorphisms}) once we prove that $\Gamma$ is either of the form $\ZZ_{\ell}$ or $\ZZ_2\times \ZZ_r$ for some $r$. 
	
	To see that $\Gamma$ has the claimed form, note that $\Gamma$ is generated by at most two elements, because every subgroup of $\ZZ\times \ZZ$ is
	generated by two elements by \cite[theorem I.7.3]{Lang}. If $\Gamma$ is contained in $\{0\}\times \ZZ_k$, then $\Gamma$ is cyclic, because every subgroup of $\ZZ_k$
	is cyclic, and this implies that we may assume
	that $\Gamma$ is generated by two elements of the form $x = (\overline{1}, \overline{a})$, $y = (0, \overline{b})$ with $a,b \in \ZZ$. Let then $g$ be a greatest common divisor 
	of $a$ and $b$. Since $\ZZ$ is a principal ideal domain, $g = ma + nb$ for integers $m$ and $n$ \cite[proposition II.5.1]{Lang}. In particular, $mx + ny = (\overline{m}, \overline{g})$ is contained in 
	$\Gamma$. Suppose first that $m$ is even. Then $a/g\cdot (\overline{m}, \overline{g}) = (0, \overline{a})$ too is an element of $\Gamma$. This means that $(\overline{1}, 0)$ is contained in 
	$\Gamma$, and then we must have $\Gamma = \ZZ_2\times\Gamma_2$ for $\Gamma_2 = \{ (0, \overline{z}) \in \Gamma \,|\, z \in \ZZ\}$. As $\Gamma_2$ is cyclic, the claim 
	follows in this case. Next, assume that $\overline{m} = \overline{1}$. If $a/g$ is even, then $a/g\cdot (\overline{1}, \overline{g}) = (0, \overline{a})$ as before. Similarly, if $b/g$ is odd, then 
	$b/g\cdot (\overline{1}, \overline{g}) = (\overline{1}, \overline{b})$ shows that
	$(\overline{1}, 0)$ is contained in $\Gamma$. Thus, we are left with considering the case that $a/g$ is odd while $b/g$ is even. Then $a/g\cdot (\overline{1}, \overline{g}) = (\overline{1}, \overline{a})$
	and $b/g\cdot (\overline{1}, \overline{g}) = (0, \overline{b})$, whence $\Gamma$ is cyclic with generator $(\overline{1}, \overline{g})$. In any case, $\Gamma$ is as claimed.	
	
	Therefore, if we assume that \cref{thm-main} is true for all homogeneous spaces of the form $G/K$ with $G$ simple, then the following 
	general reduction principle shows that \cref{thm-main} holds true for all $\ZZ_2\times\ZZ_k$-symmetric spaces $G/K$.
	
	\begin{theorem}\label{thm-reduction-principle}
		Let $\Gamma$ be a finite group having the following property: for every compact connected simple Lie group $L$ on which $\Gamma$ acts via automorphisms
		and for every subgroup $\Gamma' \subseteq \Gamma$ the pair $(L, U)$, with $U$ the identity component of $L^{\Gamma'}$, is isotropy formal. Then for 
		every compact connected Lie group $G$ and for every action of $\Gamma$ on $G$ by automorphisms the pair $(G, K)$ with $K = (G^{\Gamma})_0$ is isotropy formal.
	\end{theorem}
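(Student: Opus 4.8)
The plan is to use the structure theory of compact connected Lie groups to replace $G$ by a finite cover that splits as a torus times simply connected simple factors, with $\Gamma$ acting on the torus and permuting the simple factors, and then to decompose the resulting situation along the $\Gamma$-orbits of the simple factors. For the structural input: since $\Gamma$ acts by automorphisms it preserves the characteristic subgroups $Z(G)_0$ and $[G,G]$, and its action on the semisimple part $[G,G]$ lifts uniquely to the universal cover $G_1\times\cdots\times G_m$ of $[G,G]$ (with $G_j$ simply connected compact simple). Hence there is a $\Gamma$-equivariant finite central covering $\pi\colon\hat G\to G$ with $\hat G=Z\times G_1\times\cdots\times G_m$, $Z=Z(G)_0$, where $\Gamma$ acts on $Z$ and permutes the $G_j$ within their isomorphism classes. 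Writing $K=(G^\Gamma)_0$ and $\hat K=(\hat G^\Gamma)_0$, the map $\pi$ restricts to a finite covering $\hat K\to K$, and $\hat K=\pi^{-1}(K)_0$.

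Next I would prove that $(G,K)$ is isotropy formal if and only if $(\hat G,\hat K)$ is. By \cref{prop-formality-depends-on-torus} this reduces to comparing $(G,S)$ with $(\hat G,\hat S)$, where $S\subseteq K$ is a maximal torus and $\hat S:=\pi^{-1}(S)_0$ is a maximal torus of $\hat K$. I would then apply \cref{pop-formality-betti-numbers} to each pair, with the torus itself in the role of the maximal torus of the subgroup, so that the homogeneous space that appears is $N_\bullet(S)/S$. Three comparisons finish this step: the ranks satisfy $\rank\hat G-\dim\hat S=\rank G-\dim S$; the covering $\pi$ identifies $N_{\hat G}(\hat S)/Z_{\hat G}(\hat S)$ with $N_G(S)/Z_G(S)$ — because for a torus the preimages of the normalizer and of the centralizer are the full preimages — so the two normalizer quotients have equally many connected components; and $\hat G/\hat S\to G/S$ is a finite covering whose deck transformations are translations by elements of the connected group $\hat G$, hence act trivially on $H^{\ast}(-;\RR)$, so that $\dim H^{\ast}(\hat G/\hat S)=\dim H^{\ast}(G/S)$. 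Thus the two instances of \cref{pop-formality-betti-numbers} coincide, and the equivalence follows.

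It then remains to show $(\hat G,\hat K)$ is isotropy formal. Let $O$ run over the $\Gamma$-orbits on $\{G_1,\dots,G_m\}$ and put $G_O=\prod_{j\in O}G_j$; then $\hat G^\Gamma=Z^\Gamma\times\prod_O G_O^\Gamma$, so $\hat K=(Z^\Gamma)_0\times\prod_O(G_O^\Gamma)_0$ and $(\hat G,\hat K)=(Z,(Z^\Gamma)_0)\times\prod_O(G_O,(G_O^\Gamma)_0)$. Since Borel equivariant cohomology is multiplicative under products (and a tensor product of modules over a tensor product of rings is free precisely when both factors are), it suffices to treat each factor. The factor $(Z,(Z^\Gamma)_0)$ is isotropy formal because $Z$ is abelian, so the isotropy action is trivial and $H^{\ast}_{(Z^\Gamma)_0}(Z/(Z^\Gamma)_0)$ is free over $H^{\ast}(B(Z^\Gamma)_0)$. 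For an orbit $O$ all $G_j$ with $j\in O$ are isomorphic to a single simple group $L$; fixing $j_0\in O$ with stabilizer $\Gamma_O\subseteq\Gamma$, projection to the $j_0$-coordinate restricts to an isomorphism $(G_O^\Gamma)_0\xrightarrow{\;\sim\;}(L^{\Gamma_O})_0$, where $\Gamma_O$ acts on $L$ by the induced automorphisms. Using \cref{prop-formality-depends-on-torus} to pass to maximal tori and identifying $G_O/(G_O^\Gamma)_0$, up to isomorphism, with a product of copies of $L$ carrying (twisted) conjugation actions and a copy of $L/(L^{\Gamma_O})_0$ carrying the isotropy action, one reduces isotropy formality of $(G_O,(G_O^\Gamma)_0)$ to that of $(L,(L^{\Gamma_O})_0)$ — which holds by the hypothesis applied to the subgroup $\Gamma_O\subseteq\Gamma$ — together with the classical equivariant formality of (twisted) conjugation actions of a compact connected Lie group on itself. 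Assembling the factors shows $(\hat G,\hat K)$, and hence $(G,K)$, is isotropy formal.

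I expect the covering lemma of the second paragraph to be the main obstacle: one must keep precise control of the number of connected components of the normalizer quotients and of the real cohomology dimension of $G/S$ when passing to the finite cover, and this is where the argument is most delicate. A secondary technical point is keeping the orbit-piece reduction honest, since it relies both on the Shapiro-type description of $(G_O^\Gamma)_0$ and on equivariant formality of the (twisted) conjugation actions that appear there — the latter being exactly the ingredient that is needed whenever $\Gamma$ permutes at least two of the simple factors.
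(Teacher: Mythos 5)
Your proposal is correct and follows the same overall skeleton as the paper's proof: pass to a finite cover splitting $G$ as a torus times simply connected simple factors, discard the toral factor, decompose along the $\Gamma$-orbits of the simple factors, and identify the fixed-point subgroup of an orbit piece $G_O \cong L\times\cdots\times L$ with a diagonally embedded copy of $H = (L^{\Gamma_O})_0$, where $\Gamma_O$ is the stabilizer of one factor. The two arguments diverge only in the supporting lemmas. For the passage to the cover, the paper simply cites \cite[proposition 2.13]{GoertschesMare-hyperpolar} (isotropy formality depends only on the inclusion of Lie algebras), whereas you reprove this by hand via \cref{pop-formality-betti-numbers}; your comparisons of normalizer components and of real Betti numbers under the covering are correct, so this is merely extra work. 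For the crucial diagonal step, the paper's \cref{prop-diagonal-actions} runs the Serre spectral sequence of $(G\times\cdots\times G)/\Delta(K)\rightarrow(G\times\cdots\times G)/\Delta(G)$ against the lower bound of \cref{pop-formality-betti-numbers}, using connectedness of $Z_G(K)$ to count components of $N_{G\times G}(\Delta(K))$; you instead observe that $L^{t+1}/\Delta(H)$ is $H$-equivariantly an honest product of $t$ copies of $L$ with the conjugation action and one copy of $L/H$ with the isotropy action, and then invoke multiplicativity. This works, but two points you leave implicit each deserve a sentence: (i) equivariant formality of a product of equivariantly formal $H$-spaces is not a formal Künneth consequence here, since the acting group does not split; one should reduce to a maximal torus $S\subseteq H$ and use the fixed-point Betti-number criterion, for which products behave well; and (ii) equivariant formality of the conjugation action of $S$ on $L$ holds because the fixed point set $Z_L(S)$ is connected of rank equal to $\rank L$, so that $\dim H^{\ast}(Z_L(S)) = 2^{\rank L} = \dim H^{\ast}(L)$. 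With these two remarks supplied, your route is a clean alternative to \cref{prop-diagonal-actions}.
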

	
	Similar reduction principles already appeared in \cite{Goertsches-automorphisms} and \cite{thesis}. However, there an ad-hoc argument was used to deduce that 
	if a compact connected Lie group pair $(G, K)$ is isotropy formal, also the pair $(G\times\ldots\times G, \Delta(K))$ is isotropy formal. This is 
	generally true:
	
	\begin{proposition}\label{prop-diagonal-actions}
		Let $G$ be a compact, connected, semisimple Lie group and $K$ a closed, connected subgroup. If the isotropy
		action of $K$ on $G/K$ is equivariantly formal, then the isotropy action of $\Delta(K)$
		on $(G\times\ldots\times G)/\Delta(K)$ is equivariantly formal as well.
	\end{proposition}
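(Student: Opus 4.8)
The plan is to establish the equality case of the Betti‑number criterion \cref{pop-formality-betti-numbers} for the pair $(G^n, \Delta(K))$, where $G^n = G \times \cdots \times G$ denotes the product of $n$ copies of $G$ and $\Delta(K) \subseteq G^n$ is the diagonally embedded copy of $K$. Fix a maximal torus $S$ of $K$, so that $\Delta(S)$ is a maximal torus of $\Delta(K)$; then \cref{pop-formality-betti-numbers} is available for this torus, and applied to $(G, K)$ — which is isotropy formal by hypothesis — it reads $2^{\rank G - \rank K}\dim H^0(N_G(S)/N_K(S)) = \dim H^*(G/K)$. To evaluate the right‑hand side of \cref{pop-formality-betti-numbers} applied to $(G^n, \Delta(K))$, I would first note that the map $G^n \to G^{n-1} \times G/K$ sending $(g_1, \dots, g_n)$ to $(g_1 g_n^{-1}, \dots, g_{n-1}g_n^{-1}, g_n K)$ is invariant under the right diagonal $\Delta(K)$‑action and descends to a diffeomorphism $G^n/\Delta(K) \cong G^{n-1} \times (G/K)$. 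Together with the K\"unneth theorem and Hopf's theorem $\dim H^*(G) = 2^{\rank G}$, this gives
\begin{align*}
  \dim H^*(G^n/\Delta(K)) = 2^{(n-1)\rank G}\cdot \dim H^*(G/K).
\end{align*}

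Next I would compute the normalizer quotient appearing in \cref{pop-formality-betti-numbers} for $(G^n, \Delta(K))$. The key observation is that a tuple $(g_1, \dots, g_n)$ normalizes $\Delta(S)$ if and only if every $g_i$ normalizes $S$ \emph{and} all the $g_i$ induce the same automorphism of $S$, i.e.\ $g_i^{-1}g_j \in Z_G(S)$ for all $i$ and $j$. Writing $g_i = g_1 z_i$ with $g_1 \in N_G(S)$ and $z_i \in Z_G(S)$ (and $z_1 = e$) identifies $N_{G^n}(\Delta(S))$ with $N_G(S) \times Z_G(S)^{n-1}$, on which $N_{\Delta(K)}(\Delta(S)) = \Delta(N_K(S))$ acts by $k \cdot (g_1, z_2, \dots, z_n) = (g_1 k, k^{-1}z_2 k, \dots, k^{-1}z_n k)$. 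Hence $N_{G^n}(\Delta(S))/N_{\Delta(K)}(\Delta(S))$ is the bundle over $N_G(S)/N_K(S)$ with fibre $Z_G(S)^{n-1}$ associated to the principal $N_K(S)$‑bundle $N_G(S) \to N_G(S)/N_K(S)$. Since the centralizer of a torus in a compact connected Lie group is connected, this fibre is connected, so the projection onto $N_G(S)/N_K(S)$ induces a bijection on connected components and
\begin{align*}
  \dim H^0\!\left(\tfrac{N_{G^n}(\Delta(S))}{N_{\Delta(K)}(\Delta(S))}\right) = \dim H^0\!\left(\tfrac{N_G(S)}{N_K(S)}\right).
\end{align*}

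Putting the pieces together, the left‑hand side of \cref{pop-formality-betti-numbers} for $(G^n, \Delta(K))$ equals
\begin{align*}
  2^{n\rank G - \rank K}\dim H^0\!\left(\tfrac{N_G(S)}{N_K(S)}\right)
  &= 2^{(n-1)\rank G}\cdot 2^{\rank G - \rank K}\dim H^0\!\left(\tfrac{N_G(S)}{N_K(S)}\right) \\
  &= 2^{(n-1)\rank G}\dim H^*(G/K) = \dim H^*(G^n/\Delta(K)),
\end{align*}
so equality holds in \cref{pop-formality-betti-numbers} and the pair $(G^n, \Delta(K))$ is isotropy formal. If one prefers, one may first replace $K$ by $S$ via \cref{prop-formality-depends-on-torus} and run the same computation with $K = S$; and, as the argument shows, the semisimplicity of $G$ is not really needed beyond Hopf's theorem.

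The step I expect to be the main obstacle is the analysis of $N_{G^n}(\Delta(S))$ together with the count of its connected components modulo $N_{\Delta(K)}(\Delta(S))$: one has to check carefully that normalizing $\Delta(S)$ forces all of the $g_i$ to act on $S$ in the same way, to identify the resulting fibre‑bundle structure over $N_G(S)/N_K(S)$, and to invoke connectedness of $Z_G(S)$ in order to conclude that passing from $G$ to $G^n$ produces no new components. The rest is bookkeeping; the point is that the factor $2^{(n-1)\rank G}$ that \cref{pop-formality-betti-numbers} introduces when going from $G$ to $G^n$ is cancelled exactly by the total Betti number $\dim H^*(G^{n-1}) = 2^{(n-1)\rank G}$.
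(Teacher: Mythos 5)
Your argument is correct, and it reaches the conclusion by a genuinely different route in one substantive step. Both proofs run through the equality case of \cref{pop-formality-betti-numbers}, and both analyse the normalizer in the same way: a tuple normalizes the diagonal torus precisely when all entries lie in $N_G(S)$ and differ by elements of $Z_G(S)$, and connectedness of the centralizer of a torus guarantees that passing to the product creates no new connected components (your associated-bundle description of $N_{G^n}(\Delta(S))/\Delta(N_K(S))$ over $N_G(S)/N_K(S)$ with connected fibre $Z_G(S)^{n-1}$ is a correct way to organize this; the paper simply counts components of $N_{G\times G}(\Delta(K))$ directly). Where you diverge is in evaluating $\dim H^{\ast}(G^n/\Delta(K))$: the paper first reduces to the case that $K$ is a torus and $G$ is simply connected, and then only \emph{bounds} this dimension from above by the $E_2$-page of the Leray--Serre spectral sequence of the fibration $G/K \hookrightarrow (G\times G)/\Delta(K) \rightarrow (G\times G)/\Delta(G)$ (simple connectivity being needed to untwist the coefficients), letting the upper bound meet the lower bound from \cref{pop-formality-betti-numbers}. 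You instead observe that $(g_1,\ldots,g_n)\mapsto (g_1g_n^{-1},\ldots,g_{n-1}g_n^{-1},g_nK)$ descends to a diffeomorphism $G^n/\Delta(K)\cong G^{n-1}\times (G/K)$, so that K\"unneth and Hopf's theorem give the exact value $2^{(n-1)\rank G}\cdot\dim H^{\ast}(G/K)$ with no spectral sequence, no reduction to a toral $K$, and no passage to the universal cover; as you note, this also makes the semisimplicity hypothesis superfluous, since Hopf's theorem holds for every compact connected Lie group. Your version is, if anything, the more elementary and more general of the two.
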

	\begin{proof}
		For notational convenience, we only consider the case of two factors. By \cref{prop-formality-depends-on-torus} there will be no loss of generality if we assume that $K$ is a 
		toral subgroup, and because formality of the pair $(G, K)$ only depends on the inclusion of Lie algebras $\liek \hookrightarrow \lieg$ 
		\cite[proposition 2.13]{GoertschesMare-hyperpolar}, there is also no loss of generality if we assume $G$ to be simply-connected. Then according to 
		\cref{pop-formality-betti-numbers} we have
		\begin{align*}
			2^{\rank G - \dim K}\cdot \dim H^0(N_G(K)) &\leq \dim H^{\ast}(G/K)
		\end{align*}
		with equality if and only if $(G, K)$ is isotropy formal. Similar statements hold with $G\times G$ and $\Delta(K)$ in place of $G$ and $K$, and thus, we need to 
		compare $N_G(K)$ and $N_{G\times G}(\Delta(K))$. So suppose that $(g, h) \in G\times G$ normalizes $\Delta(K)$. This is equivalent to requiring that $g$ and $h$ 
		normalize $K$, and that $c_g(k) = c_h(k)$ holds for all $k \in K$; here $c_g$ and $c_h$ denote the conjugation maps in $G$. In particular, $g^{-1}h$ centralizes $K$, and so 
		is contained in $Z_G(K)$. It follows that 
		\begin{align*}
			N_{G\times G}(\Delta(K)) &= \left\{ (g, gz) \,|\, g \in N_G(K)\text{, }z \in Z_G(K)\right\}.
		\end{align*}
		However, since $K$ is a torus, the centralizer $Z_G(K)$ of $K$ in $G$ is connected \cite[corollary IV.4.51]{Knapp}, and therefore 
		$N_{G\times G}(\Delta(K))$ and $N_G(K)$ have the same number of connected components. Now consider 
		the fibration $(G\times G)/\Delta(K) \rightarrow (G\times G)/\Delta(G)$ with fiber $G/K$. Since 
		$G$ is simply-connected, the second page of the associated spectral sequence $(E_r^{\ast,\ast})_{r\geq 2}$ reads
		\begin{align*}
			E_2^{p,q} = H^q(G/K)\otimes H^p\bigl(\tfrac{G\times G}{\Delta(G)}\bigr),
		\end{align*}
		and if $K$ acts equivariantly formally on $G/K$, then because $(G\times G)/\Delta(G)$ is diffeomorphic to $G$ and 
		because the above spectral sequence abuts to $(G\times G)/\Delta(K)$, we have 
		\begin{align*}
			\dim H^{\ast}\bigl(\tfrac{G\times G}{\Delta(K)}\bigr) &\leq \dim H^{\ast}(G/K)\otimes H^{\ast}\bigl(\tfrac{G\times G}{\Delta(G)}\bigr) \\
			&= 2^{\rank G}\cdot \dim H^{\ast}((G/K)^K) \\
			&= 2^{\rank (G\times G) - \dim K}\cdot \dim H^0(N_{G\times G}(\Delta(K))).
		\end{align*}
		As remarked earlier, this means that $\Delta(K)$ acts equivariantly formally on $(G\times G)/\Delta(K)$.
	\end{proof}	
	
	\begin{proof}[Proof of \cref{thm-reduction-principle}]
		Let $G$ be a compact connected Lie group on which $\Gamma$ acts via automorphisms. As already observed in the proof of \cref{prop-diagonal-actions},
		by \cite[proposition 2.13]{GoertschesMare-hyperpolar} we may pass to covers of $G$, and hence there is no loss of generality if we assume that $G$ splits as a 
		direct product $G = Z\cdot\prod_{j=1}^r G_j$ with simply-connected simple factors $G_i$ and toral factor $Z$. Since $\Gamma$ acts via
		automorphisms on $G$, $\Gamma$ will permute the simple factors $\mathcal{G} = \{G_1, \ldots, G_r\}$ of $G$ and preserve $Z$. Moreover,
		it follows at once from \cref{pop-formality-betti-numbers} that for any subtorus $Z' \subseteq Z$ the pair $(Z, Z')$ is isotropy formal, and this implies that
		we may assume that $Z$ is trivial, that is, that $G$ is semisimple. 
		
		Thinking of $\Gamma$ temporarily as a subgroup of the permutation group of $\mathcal{G}$, we fix a simple factor $L \in \mathcal{G}$ and enumerate the 
		elements of the orbit $\Gamma\cdot L$ through $L$ as $\Gamma\cdot L = \{L, \gamma_1(L), \ldots, \gamma_t(L)\}$ for certain automorphisms 
		$\gamma_1, \ldots, \gamma_t \in \Gamma$. The subgroup $U = L\cdot \gamma_1(L)\cdots \gamma_t(L)$ is a $\Gamma$-invariant subspace of $G$, and the fixed point set 
		$U \cap K$ of $\Gamma$ is a subgroup of $K$. By considering the various orbits of $\Gamma$ on $\mathcal{G}$ we see that $G/K$ splits as a 
		product of homogeneous spaces of the form $U/(U\cap K)$ with $U$ as above and that $(G, K)$ is isotropy formal if and only if so are all the pairs 
		$(U, U\cap K)$ arising in this way. Hence, we may assume from the outset that $G$ is a direct product $G = L\cdot \gamma_1(L)\cdots \gamma_t(L)$ and that 
		$\mathcal{G} = \{L, \gamma_1(L), \ldots, \gamma_t(L)\}$ are the simple factors of $G$.
		
		Now set $\gamma_0 := \id_L$ and observe the we have an isomorphism of Lie groups
		\begin{align*}
			\Phi\colon L\times \ldots\times L &\rightarrow G, \\
			(g_0, \ldots, g_t) &\mapsto \prod_{j=0}^t \gamma_j.g_j.
		\end{align*}	
		To identifty the isomorphic image of the fixed point set $K$ under this map, let $\Gamma' \subseteq \Gamma$ be the set of all elements which map $L$ into itself. In other
		words, $\Gamma'$ is the isotropy subgroup at the point $L \in \mathcal{G}$ and the orbit map $\Gamma/\Gamma' \rightarrow \mathcal{G}$, 
		$\gamma\Gamma' \mapsto \gamma (L)$, becomes a bijection. Denote by $H \subseteq L$ the fixed point set of $\Gamma'$ on $L$. Then $\Delta(H) \cong K$ under the 
		isomorphism above. Indeed, for each $\gamma \in \Gamma$ and all $j$ there exists one and exactly one $\pi(j)$ such that 
		$\gamma\gamma_j (L) = \gamma_{\pi(j)} (L)$. By choice of $\Gamma'$, this means that $\gamma\gamma_j = \gamma_{\pi(j)}\gamma'$ for some element 
		$\gamma' \in \Gamma'$, and so, if $h \in H$ is given, then  
		\begin{align*}
			\gamma.(\gamma_j.h) &= (\gamma_{\pi(j)}\gamma').h = \gamma_{\pi(j)}.h.
		\end{align*}
		Using once more that $\Gamma$ acts via automorphisms on $G$, it follows that 
		\begin{align*}
			\gamma.\Phi(h, \ldots, h) &= \prod_{j=0}^t \gamma.(\gamma_j.h) = \prod_{i=0}^t \gamma_i.h = \Phi(h, \ldots, h).
		\end{align*}
		Conversly, suppose that $\Phi(g_0, \ldots, g_t)$ is fixed by $\Gamma$. In particular, each element $\gamma' \in \Gamma'$  must fix 
		$\Phi(g_0, \ldots, g_t)$. But $\gamma'$ restricts to an automorphism of the simple factor 
		$L$, which implies that $g_0 = \gamma'.g_0$. Similarly, the element 
		$(\gamma_i)^{-1}$ sends the simple factor $\gamma_i L$ onto the simple factor $L$, and this implies that $g_0 = g_i$. Putting
		these two observations together shows that $\Delta(H) \cong K$. Since by assumption $(L, H)$ is isotropy formal, also
		$(G, K)$ is isotropy formal by \cref{prop-diagonal-actions}.
	\end{proof}			
	
	\section{Normal forms}\label{sec-normal-form}
	
	Let $G$ be a compact connected Lie group and $\sigma_1$, $\sigma_2$ two commuting (Lie group) automorphisms of order $2$ and $k$, respectively. These
	two automorphisms define a $\ZZ_2\times\ZZ_k$-action on $G$ by automorphisms, and conversely, any such action arises in this way. 
	Let then $K$ be the identity component of $G^{\sigma_1} \cap G^{\sigma_2}$ and $S \subseteq K$ a maximal torus. By \cref{prop-formality-depends-on-torus}
	the pair $(G, K)$ is isotropy formal precisely when $(G, S)$ is isotropy formal, so one strategy to prove \cref{thm-main} is to obtain a description
	of $S$ in terms of computable data on $G$ and to use this description to prove that $(G, S)$ is isotropy formal. When $k = 2$, i.\,e.\,when $\sigma_2$ is an involution and 
	$G/K$ is a $\ZZ_2\times\ZZ_2$-symmetric space, this is what was done in \cite[Chapter II]{thesis}. It was observed there that it is possible to find a suitable $\sigma_2$-invariant
	maximal torus $T$ of $G$ which contains $S$ and a notion of positivity on the set of roots of $G$ in such a way that, on the complexification $\liet$ of the Lie algebra $\liet_{\RR}$ 
	of $T$, the automorphism $\sigma_2$ is the composition of $\prod_{\alpha\in\Omega^{+}} s_{\alpha}$ 
	with an automorphism of $G$ which permutes the positive roots and which commutes with each root reflection $s_{\alpha}\colon\liet\rightarrow\liet$, that is, the 
	reflection along the hyperplane $\ker \alpha$, $\alpha \in \Omega^{+}$. The set $\Omega^{+}$ consists of all positive roots which restrict to zero on the complexification $\liealg{s}$ of the Lie algebra of $S$, and this set 
	$\Omega^{+}$ can be determined explicitly in terms of the root system of $G$. In this way a maximal torus $S$ of $K$ can be recovered from the root system
	of $G$.
	
	The purpose of the next two sections is to show that similar results hold for arbitrary $\ZZ_2\times\ZZ_k$-symmetric spaces. Starting from a maximal torus 
	$S$ of $K$ we will construct a maximal torus $T$ of $G$ which is invariant under $\sigma_1$ and $\sigma_2$. We say that the pair $\sigma_1$, $\sigma_2$ 
	has property $\star$ if with respect to this maximal torus $T$ every root that vanishes on $S$ is reflected by $\sigma_2$ (\cref{def-property-reflective}),
	and we will show in \cref{thm-normal-form-automorphisms} below that in this case $\sigma_2|_T$ necessarily is a product of root reflections, possibly composed with an 
	automorphism of the Dynkin diagram of $G$. In the next section (cf. \cref{thm-reflection}) we will then show that on simple Lie groups $G$, every pair of automorphisms 
	$\sigma_1$, $\sigma_2$ defining a $\ZZ_2\times\ZZ_k$-action on $G$ actually satisfies property $\star$.
	
	In what follows, we will often have to refer to roots and root systems. These concepts are most naturally defined for complex Lie algebras, so for notational convenience we
	reserve undecorated fraktur letters to denote the complexification of the Lie algebra of the corresponding Lie group. If we want to refer to the underlying
	real Lie algebra, we add a subscript. For example, if $G$ is a Lie group, then $\lieg_{\RR}$ is the Lie algebra of $G$ and $\lieg = (\lieg_{\RR})^{\CC}$ is its complexification.  	
	
	\begin{proposition}\label{prop-centralizers-of-subtori}
		Let $T$ be a maximal torus of a compact, connected Lie group $G$ and $S \subseteq T$ a subtorus. Let $\Delta \subseteq \liet^{\ast}$ be the roots of $\lieg$ with
		respect to $\liet$ and $\Omega \subseteq \Delta$ the set of roots vanishing on $\lies$. Then, if we denote for each root $\alpha$ its root space by $\lieg_{\alpha}$, the 
		centralizer $Z_{\lieg}(\lies)$ of $\lies$ in $\lieg$ is
		\begin{align*}
			Z_{\lieg}(\lies) = \liet\oplus\bigoplus_{\alpha\in\Omega}\lieg_{\alpha}.
		\end{align*}
	\end{proposition}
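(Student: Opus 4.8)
The plan is to read off the result directly from the root space decomposition
\begin{align*}
	\lieg = \liet\oplus\bigoplus_{\alpha\in\Delta}\lieg_{\alpha},
\end{align*}
using that this is precisely the decomposition of $\lieg$ into simultaneous eigenspaces for the adjoint action of the abelian subalgebra $\liet$, hence also of its subalgebra $\lies$. Concretely, for $H\in\lies\subseteq\liet$ the endomorphism $\ad(H)$ preserves each summand, vanishing on $\liet$ and acting on $\lieg_{\alpha}$ as multiplication by the scalar $\alpha(H)$. From this the inclusion ``$\supseteq$'' is immediate: $\liet$ centralizes $\lies$ because $\lies\subseteq\liet$ and $\liet$ is abelian, and for $\alpha\in\Omega$ the space $\lieg_{\alpha}$ is centralized by $\lies$ since $\alpha(H)=0$ for all $H\in\lies$ by definition of $\Omega$.

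For the reverse inclusion I would write an arbitrary $Z\in Z_{\lieg}(\lies)$ as $Z = H_0 + \sum_{\alpha\in\Delta}X_{\alpha}$ with $H_0\in\liet$ and $X_{\alpha}\in\lieg_{\alpha}$, and evaluate $0 = [H, Z] = \sum_{\alpha}\alpha(H)X_{\alpha}$ for every $H\in\lies$. Because $\liet$ and the root spaces $\lieg_{\alpha}$ are linearly independent subspaces of $\lieg$, this forces $\alpha(H)X_{\alpha}=0$ for each root $\alpha$ and each $H\in\lies$; hence $X_{\alpha}=0$ whenever $\alpha\notin\Omega$, so $Z$ lies in $\liet\oplus\bigoplus_{\alpha\in\Omega}\lieg_{\alpha}$, as claimed.

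I do not expect any genuine obstacle here: the statement is an elementary consequence of the weight space decomposition under $\ad(\lies)$. The only points that warrant a word of care are that all Lie algebras occurring are complexifications (so that the root space decomposition and the eigenvalue description of $\ad(H)$ are available), and the passage between ``$Z$ commutes with all of $\lies$'' and the pointwise conditions $\alpha(H)=0$, which is exactly what makes the vanishing condition $\alpha|_{\lies}=0$ defining $\Omega$ the right one.
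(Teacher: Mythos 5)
Your argument is correct and is exactly the standard one; the paper itself states this proposition without proof, treating it as an elementary consequence of the root space decomposition, which is precisely what you have written out. No gaps: the key step, that $\alpha(H)X_{\alpha}=0$ for all $H\in\lies$ forces $X_{\alpha}=0$ when $\alpha\notin\Omega$, is handled correctly.
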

	
	Now let $G$ be a compact, connected Lie group and $\sigma_1$, $\sigma_2$ two automorphisms of $G$. We suppose that $\sigma_1$ is an involution, that
	$\sigma_2$ is of finite-order, and that $\sigma_1$ and $\sigma_2$ commute. Denote by $K$ the connected component of $G^{\sigma_1} \cap G^{\sigma_2}$
	containing the identity element and choose a maximal torus $S \subseteq K$. We also fix an $\Ad_G$-invariant negative-definite inner product $\langle\cdot,\cdot\rangle$ on 
	$\lieg_{\RR}$ which is invariant under $\sigma_1$ and $\sigma_2$, and we extend $\langle\cdot,\cdot\rangle$ $\CC$-linearly to $\lieg$. Such a product always exists, 
	as we may average over the finite subgroup of $\Aut(G)$ generated by $\sigma_1$ and $\sigma_2$. Let further $K_i = (G^{\sigma_i})_0$ be the identity component 
	of the fixed point set of $\sigma_i$. Note that because $\sigma_1$ and $\sigma_2$ commute, 
	$\sigma_2$ restricts to an automorphism $K_1 \rightarrow K_1$. As is well-known, $T_1 = Z_{K_1}(S)$ thus is a maximal torus of $K_1$ (see 
	e.g. \cite[lemma X.5.3]{Helgason}), and by the same reasoning $T= Z_G(T_1)$ is a maximal torus in $G$. Both $T_1$ and $T$ are invariant under
	$\sigma_1$ as well as $\sigma_2$. We write $\Delta$ for the $\liet$-roots of $\lieg$, $\Omega$ for the set of roots vanishing on $\lies$.
	
	\begin{proposition}\label{prop-formality-if-centralizer-is-torus}
		Given a compact simple Lie group $G$  and two commuting automorphisms $\sigma_1$ and $\sigma_2$ on $G$, let $K$ be the identity component of
		$G^{\sigma_1} \cap G^{\sigma_2}$. Suppose that $Z_G(S)$ is a maximal torus in $G$. Then $(G, K)$ is isotropy formal.
	\end{proposition}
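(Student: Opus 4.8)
The plan is to use \cref{prop-formality-depends-on-torus} to replace $K$ by its maximal torus $S$, and then to identify $S$ with a maximal torus of the fixed point set of an automorphism of $G$ that is induced by an automorphism of the Dynkin diagram; for such pairs isotropy formality is already guaranteed by \cref{prop-dynkin-diagram-auto-tnhz} together with \cref{prop-tnhz-implies-isotropy-formality}, and a second application of \cref{prop-formality-depends-on-torus} then yields the claim.

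To set this up, I would first observe that $T = Z_G(T_1)$ is contained in $Z_G(S)$ because $S \subseteq T_1$, so the hypothesis that $Z_G(S)$ be a maximal torus forces $Z_G(S) = T$; by \cref{prop-centralizers-of-subtori} this means that no root of $\lieg$ with respect to $\liet$ vanishes on $\lies$, and hence $\lies_{\RR}$ contains a regular element $H_0$. Since $S$ is fixed pointwise by $\sigma_1$ and $\sigma_2$, both automorphisms fix $H_0$, and because $H_0$ is regular each $\sigma_i|_{\liet}$ must then preserve the Weyl chamber containing $H_0$. Choosing the notion of positivity for which $H_0$ is dominant, I conclude that $\sigma_1|_{\liet}$ and $\sigma_2|_{\liet}$ permute the simple roots, hence coincide with the restrictions to $\liet$ of automorphisms $\nu_1$, $\nu_2$ of $G$ that are induced by automorphisms of the Dynkin diagram. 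Moreover, since $S$ is fixed pointwise by both $\sigma_i$ one has $\lies \subseteq \liet^{\sigma_1}\cap\liet^{\sigma_2}$, and tracing through the construction of $T_1$ and $T$ (so that $T_1 = (T^{\sigma_1})_0$ and $S = (T_1^{\sigma_2})_0$) yields the reverse inclusion; thus $\lies = \liet^{\sigma_1}\cap\liet^{\sigma_2} = \liet^{\nu_1}\cap\liet^{\nu_2}$ is the fixed subspace of the subgroup $\Gamma_0 := \langle\nu_1,\nu_2\rangle$ of the Dynkin diagram automorphism group of $G$.

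The crucial step is then to show that $\liet^{\Gamma_0} = \liet^{\nu}$ for a \emph{single} diagram automorphism $\nu \in \Gamma_0$. If $G$ is not of type $\liealg{d}_4$, its diagram automorphism group is cyclic, so $\Gamma_0$ is cyclic and $\nu$ may be taken to be any generator (including $\nu = \id$ if $\Gamma_0$ is trivial). If $G$ is of type $\liealg{d}_4$, the only noncyclic possibility is $\Gamma_0 \cong S_3$, and here one checks directly on the reflection representation that $\liet^{S_3}$ is two-dimensional and agrees with $\liet^{\nu}$ for any element $\nu$ of $\Gamma_0$ of order $3$; such a $\nu$ is realised by a word in $\nu_1, \nu_2$ and hence by an automorphism of $G$ induced by an automorphism of the Dynkin diagram. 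I expect this to be the only genuinely delicate point, precisely because $\liealg{d}_4$ is the unique simple type whose Dynkin diagram has a noncyclic automorphism group; the rest is bookkeeping.

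To conclude, with $\nu$ chosen as above the torus $T$ is $\nu$-invariant, so $(T^{\nu})_0$ is a maximal torus of $H := (G^{\nu})_0$, and its Lie algebra is $\liet^{\nu} = \lies$; hence $S = (T^{\nu})_0$ is a maximal torus of $H$. By \cref{prop-dynkin-diagram-auto-tnhz}, $H$ is totally nonhomologous to zero in $G$, so $(G, H)$ is isotropy formal by \cref{prop-tnhz-implies-isotropy-formality}. Applying \cref{prop-formality-depends-on-torus} to $H$ then shows that $(G, S)$ is isotropy formal, and applying it once more to $K$ shows that $(G, K)$ is isotropy formal, as desired.
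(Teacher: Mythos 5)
Your argument is correct, but after the common opening it diverges from the paper's proof. Both begin identically: from $Z_G(S)=T$ one extracts a regular element of $\lies$, hence a notion of positivity preserved by $\sigma_1$ and $\sigma_2$, and each $\sigma_i$ restricted to $\liet$ agrees with a diagram-induced automorphism $\nu_i$ (the paper's $\tau_i$, with $\sigma_i=c_{t_i}\circ\tau_i$ and $t_i\in T$). The paper then runs a case analysis ($\tau_1=\tau_2$, or else one of the $\tau_i$ is trivial --- the only possibilities when $\sigma_1$ is an involution) and in each case identifies $S$ with $T_1$ or $T_2$, i.e.\ with a maximal torus of $K_1$ or $K_2$, so that the conclusion follows from the known equivariant formality of isotropy actions of single-automorphism fixed point sets together with \cref{prop-formality-depends-on-torus}. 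You instead identify $\lies$ with $\liet^{\nu}$ for a single diagram automorphism $\nu$ generating $\langle\nu_1,\nu_2\rangle$ and conclude via \cref{prop-dynkin-diagram-auto-tnhz} and \cref{prop-tnhz-implies-isotropy-formality}. This buys uniformity (no case distinction, and it even covers commuting pairs in which neither automorphism is an involution, where the paper's dichotomy can fail) and it needs only the TNHZ statement for diagram automorphisms rather than the full strength of \cite[theorem 1.1]{Goertsches-automorphisms}; the paper's route, by contrast, stays entirely with the given automorphisms $\sigma_1,\sigma_2$ and their fixed point sets.

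Two small caveats. First, your $\liealg{d}_4$/$S_3$ case is vacuous: since $\sigma_1$ and $\sigma_2$ commute, so do the permutations $\nu_1|_{\Pi}$ and $\nu_2|_{\Pi}$, so $\Gamma_0$ is always cyclic; your direct verification for $S_3$ is correct but unnecessary. Second, the justification ``$T$ is $\nu$-invariant, so $(T^{\nu})_0$ is a maximal torus of $H$'' is too quick as stated: invariance of a maximal torus under an automorphism does not in general imply that $(T^{\nu})_0$ is maximal in $(G^{\nu})_0$ (conjugation by a Weyl group representative in $SU(2)$ already gives a counterexample, with $(T^{\nu})_0$ trivial but $(G^{\nu})_0$ a circle). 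What you actually need --- and what is true --- is the standard fact that for an automorphism induced by a Dynkin diagram automorphism with respect to canonical generators adapted to $T$ and $\Delta^{+}$, the subspace $\liet^{\nu}$ is a Cartan subalgebra of $\lieg^{\nu}$; this is part of the structure theory recalled in \cref{sec-preliminaries}, and it applies to your $\nu$ because a product of such automorphisms, taken with respect to the same canonical generators, is again of this form. With that reference supplied, the proof is complete.
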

	\begin{proof}
		In this case $T = Z_G(S)$ by \cref{prop-centralizers-of-subtori}. Also note that $S$ contains regular elements for $T$. Pick one such element $X \in \lies$ and define a notion of positivity on the set $\Delta$ of $\liet$-roots by
		declaring a root $\alpha \in \Delta$ to be positive if $\alpha(\I X) > 0$. Denote by $\Pi$ the resulting set of simple roots. Then both $\sigma_1$ and $\sigma_2$ permute $\Pi$
		and we may write $\sigma_i = c_i\circ \tau_i$ where $t_i \in N_G(T)$, and where $\tau_i$ is either the identity or the outer automorphism induced by the automorphism of the
		 Dynkin diagram of $\lieg$ corresponding to $\sigma_i|_{\Pi}\colon\Pi\rightarrow\Pi$ (cf.\,\cref{sec-preliminaries}). In fact, $\sigma_i\circ (\tau_i)^{-1}$ permutes 
		 $\Pi$, and since no Weyl group element fixes a Weyl chamber (\cite[theorem 10.3]{Humphreys-lie-algebras}), it follows that $t_i \in T$. Now we distinguish two cases.
		 
		 If $\tau_1 = \tau_2$, then $S$ is also a maximal torus for $K_1$. Indeed, $\sigma_i|_T = \tau_i|_T$, because $t_i \in T$, and thus $\sigma_1|_T = \sigma_2|_T$. Moreover, 
		 we have $T^{\sigma_1} = T_1$ and $(T_1)^{\sigma_2} = S$ by construction, and so in particular $\sigma_2|_{T_1} = \id$. This means that $S = T_1$ is a maximal torus of 
		 the finite-order automorphism $\sigma_1$, and since the pair $(G, K_1)$ is isotropy formal, so is the pair $(G, K)$ by \cref{prop-formality-depends-on-torus}. Next, consider
		 the case $\tau_1 \neq \tau_2$. We claim that either $\tau_1 = \id$ or $\tau_2 = \id$. In fact, if $\lieg$ is not of type $\liealg{d}_4$, then there is at most one non-trivial 
		 automorphism $\Pi\rightarrow\Pi$, see \cite[theorem X.3.29]{Helgason}; and if $\lieg$ is of type $\liealg{d}_4$, then $\Out(G)$ is isomorphic to the permutation group on 
		 three letters and we note that $\tau_1|_{\Pi}$ and $\tau_2|_{\Pi}$ must commute as well, which only is possible if one $\tau_1|_{\Pi}$ or $\tau_2|_{\Pi}$ is the identity 
		 permutation. Thus, one of $\sigma_1$ and $\sigma_2$ is an inner automorphism. Suppose that $\sigma_2$ is inner. Then $\sigma_2$ fixes $T_1$ pointwise, so again 
		 $S = T_1$ and $(G, K)$ is isotropy formal. On the other hand, if $\sigma_1$ is inner, then $T_1 = T$ and $S = T^{\sigma_2}$. But $T_2 = Z_{K_2}(S)$ is a maximal torus for 
		 $K_2$ and $T_2 \subseteq T$. Thus $T_2 = S$, $K$ and $K_2$ share a maximal torus, and $(G, K)$ is isotropy formal. 
    \end{proof}
	
	For the next 
	\namecref{prop-strongly-orthogonal}, which is a generalization of \cite[proposition II.2.2]{thesis},  we also fix a disjoint decomposition $\Omega = \Omega^{+} \cup (-\Omega^{+})$.
	In later sections, when a notion of positivity is fixed on the set of all roots, $\Omega^{+}$ will usually just consist of all positive roots that are contained in $\Omega$, but
	for now $\Omega^{+}$ can be arbitrary. Given $\mu \in \Aut(G)$, we furthermore set $\mu(\alpha) := \alpha\circ\mu^{-1}$ for all roots $\alpha$. With this definition, 
	and assuming that $\mu$ is also an isometry for the bilinear form $\langle\cdot,\cdot\rangle$, we have $\mu(H_{\alpha}) = H_{\mu(\alpha)}$, where $H_{\alpha} \in \liet$ is the vector 
	which is dual to $\alpha$ with respect to $\langle\cdot,\cdot\rangle$.

	\begin{proposition}\label{prop-strongly-orthogonal}
		With notation as above, the following hold
		\begin{enumerate}
			\item 
			$\lieg_{\alpha} \subseteq \liep$ for all $\alpha \in \Omega$, where $\liep \subseteq \lieg$ is the $(-1)$-eigenspace of $\sigma_1$;
			
			\item 
			if $\alpha \in \Omega$, then $\sigma_1\alpha = \alpha$;
			
			\item 
			any two distinct roots of $\Omega$ are strongly orthogonal;
			
			\item 
			we have $\liet = \bigcap_{\alpha\in\Omega^{+}}\ker(\alpha)\oplus\bigoplus_{\alpha\in\Omega^{+}}\CC H_{\alpha}$.		\end{enumerate}
	\end{proposition}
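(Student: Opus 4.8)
The plan is to deduce all four statements from one structural fact about the reductive subalgebra $\liealg{m} := Z_{\lieg}(\lies)$. First I would observe that $\sigma_1$ preserves $\liealg{m}$ and $\liet$ (because $\lies$ and $\liet_1$ are pointwise fixed by $\sigma_1$, being contained in $\liek_1$), so $\sigma_1$ permutes the roots $\Omega$ of $\liealg{m}$ relative to the Cartan subalgebra $\liet$, acting on root spaces by $\sigma_1\lieg_{\alpha} = \lieg_{\sigma_1\alpha}$. The essential input is the identification of the fixed subalgebra,
\[
\liealg{m}^{\sigma_1} = Z_{\lieg}(\lies) \cap \liek_1 = Z_{\liek_1}(\lies) = \liet_1,
\]
the last equality being exactly the defining property $T_1 = Z_{K_1}(S)$, complexified (here one uses that $S$ is connected). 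Combined with \cref{prop-centralizers-of-subtori}, which gives $\liealg{m} = \liet\oplus\bigoplus_{\alpha\in\Omega}\lieg_{\alpha}$, this says $\liealg{m}^{\sigma_1}\subseteq\liet$: no nonzero $\sigma_1$-fixed element of $\liealg{m}$ has a root-space component. Note that $\sigma_2$ only enters here through the definition of $S$.

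To prove (1) and (2), fix $\alpha\in\Omega$ and $0\neq X\in\lieg_{\alpha}$. Either $\sigma_1\alpha = \alpha$, so that $\sigma_1 X = \pm X$; or $\sigma_1\alpha = \beta$ with $\beta\in\Omega\setminus\{\alpha\}$, so that $0\neq\sigma_1 X\in\lieg_{\beta}$. If $\sigma_1 X = X$ then $X\in\liealg{m}^{\sigma_1}\subseteq\liet$, which is absurd; and if $\sigma_1\alpha = \beta\neq\alpha$, then $X+\sigma_1 X$ is a nonzero $\sigma_1$-fixed element lying in $\lieg_{\alpha}\oplus\lieg_{\beta}$, hence not in $\liet$, again absurd. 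The only surviving possibility is $\sigma_1\alpha = \alpha$ with $\sigma_1 X = -X$, i.e.\ $\lieg_{\alpha}\subseteq\liep$, which is (1); that $\sigma_1\alpha = \alpha$ then gives (2) (this already came out of the case analysis, and can alternatively be re-derived by applying $\sigma_1$ to $[H,X] = \alpha(H)X$).

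Statement (3) is a quick consequence of (1). Given distinct $\alpha,\beta\in\Omega$, the set $\Omega$ is stable under negation, so $-\beta\in\Omega$ and, by (1), each of $\lieg_{\alpha},\lieg_{\beta},\lieg_{-\beta}$ lies in $\liep$; hence $[\lieg_{\alpha},\lieg_{\pm\beta}]\subseteq[\liep,\liep]\subseteq\liek_1$. Now if $\alpha+\beta$ were a root it would again vanish on $\lies$, hence belong to $\Omega$, so $\lieg_{\alpha+\beta} = [\lieg_{\alpha},\lieg_{\beta}]$ would be a nonzero subspace of $\liek_1\cap\liep = 0$; the same argument rules out $\alpha-\beta$ being a root, where $\alpha-\beta\neq 0$ since $\alpha\neq\beta$ (with the trivial remark that $2\alpha$ is never a root, covering the case $\beta=-\alpha$). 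Thus neither $\alpha+\beta$ nor $\alpha-\beta$ is a root.

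Finally, for (4): strong orthogonality of the roots in $\Omega^{+}$ forces ordinary orthogonality, so the vectors $H_{\alpha}$ ($\alpha\in\Omega^{+}$) are pairwise orthogonal for $\langle\cdot,\cdot\rangle$ and, having nonzero norm, linearly independent; they span a subspace $V\subseteq\liet$ on which $\langle\cdot,\cdot\rangle$ is nondegenerate, whence $\liet = V\oplus V^{\perp}$. Since $H\in V^{\perp}$ if and only if $\alpha(H) = \langle H_{\alpha},H\rangle = 0$ for all $\alpha\in\Omega^{+}$, one has $V^{\perp} = \bigcap_{\alpha\in\Omega^{+}}\ker(\alpha)$, which is the asserted decomposition. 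The one step demanding genuine care is (1)/(2) — correctly pinning down $\liealg{m}^{\sigma_1} = \liet_1$ and tracking the $\sigma_1$-eigenvalues on the root spaces of $\liealg{m}$; everything else is formal, and the whole argument runs parallel to \cite[proposition II.2.2]{thesis}.
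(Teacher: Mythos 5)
Your proof is correct and follows essentially the same route as the paper: everything rests on the identity $\liek_1\cap Z_{\lieg}(\lies)=Z_{\liek_1}(\lies)=\liet_1$, which forces the root spaces $\lieg_{\alpha}$, $\alpha\in\Omega$, into $\liep$, and items (3) and (4) then follow exactly as in the paper's argument. The only cosmetic difference is that you extract $\sigma_1\alpha=\alpha$ directly from your case analysis on $\liealg{m}^{\sigma_1}$, whereas the paper deduces it afterwards from $H_{\alpha}=[E_{\alpha},E_{-\alpha}]\in[\liep,\liep]\subseteq\liek_1$.
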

	\begin{proof}
		\hfill\begin{enumerate}
		\item
		Let $V := \lieg_{\alpha} + \lieg_{\sigma_1\alpha}$. Then $V \subseteq Z_{\lieg}(\lies)$, because $\sigma_1$ fixes $\lies$ and $\alpha$ vanishes on $\lies$. Hence 
		\begin{align*}
			(V \cap \liek_1) \subseteq (\liek_1 \cap Z_{\lieg}(\lies)) = Z_{\liek_1}(\lies) = \liet_1.
		\end{align*}
		Therefore, $V \cap \liek_1 = 0$, and since $V$ is $\sigma_1$-invariant, $V \subseteq \liep$.
		
		\item 
		Choose $E_{\alpha} \in \lieg_{\alpha}$, $E_{-\alpha} \in \lieg_{-\alpha}$ with $H_{\alpha} = [E_{\alpha}, E_{-\alpha}]$. By the previous item, and because 
		$[\liep, \liep] \subseteq \liek_1$, we have $H_{\alpha} \in \liek_1$. Thus $H_{\alpha}$ and hence $\alpha$ is fixed by $\sigma_1$.
		
		\item 
		Suppose $\alpha, \beta \in \Omega$ are such that $\alpha + \beta$ is a root. Let $E_{\alpha} \in \lieg_{\alpha}$ and $E_{\beta} \in \lieg_{\beta}$ be nonzero vectors. Again 
		we have, by the first item, that $X = [E_{\alpha}, E_{\beta}] \in \liek_1$. On the other hand, since $G$ is compact, $X$ is also a non-zero eigenvector for $\alpha + \beta$, 
		and $\alpha + \beta$ still vanishes on $\lies$, that is, $\alpha + \beta \in \Omega$. But then also $X \in \liep$ must hold by the first item, which is impossible.
		
		\item 
		Since the elements of $\Omega$ are strongly orthogonal, the vectors $H_{\alpha}$ with $\alpha \in \Omega^{+}$ are mutually perpendicular. In particular, they are linearly
		 independent. Moreover, since $\ad$ is skew-symmetric with respect to $\langle\cdot,\cdot\rangle$, we see that the orthogonal complement of 
		 $\bigoplus_{\alpha\in\Omega^{+}}\CC H_{\alpha}$ is given by $\bigcap_{\alpha\in\Omega^{+}}\ker\alpha$. \qedhere
		\end{enumerate}
	\end{proof}
				
	\begin{proposition}\label{prop-some-power-of-vanishing-root-is-reflected}
		Given $\alpha \in \Omega$, let $\ell \geq 1$ be the largest integer such that the roots $\alpha$, $\sigma(\alpha)$, $\ldots$, $\sigma^{\ell-1}(\alpha)$ are linearly independent
		over $\RR$ (and hence $\CC$). Then $(\sigma_2)^{\ell}(\alpha) = - \alpha$. In particular, no vector in $\bigoplus_{\alpha\in\Omega^{+}} \CC H_{\alpha}$ is
		fixed by $\sigma_2$.
	\end{proposition}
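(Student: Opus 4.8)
The plan is to analyze the $\sigma_2$-orbit of $\alpha$ through the Euclidean geometry of the dual vectors $H_\beta$, and to recognize that the conclusion $(\sigma_2)^{\ell}(\alpha)=-\alpha$ is equivalent to the absence of a nonzero $\sigma_2$-fixed vector in $\bigoplus_{\beta\in\Omega^+}\CC H_\beta$; I would therefore prove that last statement (which is also the ``in particular'' clause) and deduce the rest from it.

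First I would set up the orbit. Since $S\subseteq K\subseteq G^{\sigma_2}$, the automorphism $\sigma_2$ fixes $\lies$ pointwise, hence preserves $\Omega$, so all of $\alpha,\sigma_2(\alpha),\ldots,(\sigma_2)^{\ell}(\alpha)$ lie in $\Omega$. For $0\le i<j\le\ell-1$ the roots $(\sigma_2)^{i}(\alpha)$ and $(\sigma_2)^{j}(\alpha)$ are distinct and are not negatives of one another, for otherwise the linear independence in the hypothesis would fail; by \cref{prop-strongly-orthogonal}(3) they are then strongly orthogonal, hence orthogonal, and since $\sigma_2$ is an isometry they all have the common length $|\alpha|$. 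Thus $\{\alpha,\sigma_2(\alpha),\ldots,(\sigma_2)^{\ell-1}(\alpha)\}$ is an orthogonal basis of $W:=\operatorname{span}_{\RR}\{\alpha,\ldots,(\sigma_2)^{\ell-1}(\alpha)\}$, and by maximality of $\ell$ we have $(\sigma_2)^{\ell}(\alpha)\in W$. Applying the isometry $(\sigma_2)^{-j}$ gives $\langle(\sigma_2)^{\ell}(\alpha),(\sigma_2)^{j}(\alpha)\rangle=\langle(\sigma_2)^{\ell-j}(\alpha),\alpha\rangle=0$ for $1\le j\le\ell-1$ (again because two distinct, non-opposite roots of $\Omega$ are strongly orthogonal), so $(\sigma_2)^{\ell}(\alpha)$ is orthogonal to every basis vector of $W$ except $\alpha$; together with $|(\sigma_2)^{\ell}(\alpha)|=|\alpha|$ this forces $(\sigma_2)^{\ell}(\alpha)=\pm\alpha$.

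It remains to exclude the plus sign, and here I would invoke the key lemma that $\bigoplus_{\beta\in\Omega^+}\CC H_\beta$ contains no nonzero $\sigma_2$-fixed vector. Indeed, were $(\sigma_2)^{\ell}(\alpha)=\alpha$, then $v:=\sum_{j=0}^{\ell-1}H_{(\sigma_2)^{j}(\alpha)}=\sum_{j=0}^{\ell-1}(\sigma_2)^{j}(H_\alpha)$ would be fixed by $\sigma_2$, nonzero (its summands are mutually orthogonal and nonzero), and contained in $\bigoplus_{\beta\in\Omega^+}\CC H_\beta$, a contradiction. To prove the lemma I would note that $\bigoplus_{\beta\in\Omega^+}\CC H_\beta$ is $\sigma_2$-stable, as $\sigma_2$ permutes the $H_\beta$ with $\beta\in\Omega$ up to sign, and that it is contained in $\liet_1$, because $H_\beta\in[\liep,\liep]\subseteq\liek_1$ for $\beta\in\Omega$ by \cref{prop-strongly-orthogonal}(1) and $\liet_1=\liek_1\cap\liet$ (as in the proof of \cref{prop-strongly-orthogonal}). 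On the other hand $(T_1^{\sigma_2})_0=S$: the torus $S$ is maximal in $K$ and lies in $T_1^{\sigma_2}$, while $(T_1^{\sigma_2})_0$ is a connected subgroup of $K_1^{\sigma_2}\subseteq G^{\sigma_1}\cap G^{\sigma_2}$, hence a subtorus of $K$ containing $S$. Passing to Lie algebras and complexifying gives $(\liet_1)^{\sigma_2}=\lies$, so a $\sigma_2$-fixed $w\in\bigoplus_{\beta\in\Omega^+}\CC H_\beta$ lies in $\lies$; then $\beta(w)=0$ for all $\beta\in\Omega$, i.e.\ $w\in\bigcap_{\beta\in\Omega^+}\ker\beta$, and by \cref{prop-strongly-orthogonal}(4) this subspace meets $\bigoplus_{\beta\in\Omega^+}\CC H_\beta$ only in $0$, whence $w=0$. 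This proves the lemma, hence both $(\sigma_2)^{\ell}(\alpha)=-\alpha$ and the ``in particular'' assertion.

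I expect the main obstacle to be the careful identification $(\liet_1)^{\sigma_2}=\lies$ — checking that $(T_1^{\sigma_2})_0$ is exactly $S$ rather than merely a torus sandwiched between $S$ and $T_1$, and that taking $\sigma_2$-fixed points commutes with complexification — together with the standard but load-bearing step that strong orthogonality of roots yields genuine orthogonality of the associated vectors $H_\beta$, which is precisely what makes the isometry argument for $(\sigma_2)^{\ell}(\alpha)=\pm\alpha$ go through.
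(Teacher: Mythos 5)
Your proof is correct, and its overall architecture coincides with the paper's: both first reduce $(\sigma_2)^{\ell}(\alpha)$ to $\pm\alpha$ using that $\sigma_2$ is an isometry permuting the mutually orthogonal roots of $\Omega$, and both rule out the sign $+1$ by showing that $\bigoplus_{\beta\in\Omega^{+}}\CC H_{\beta}$ contains no nonzero $\sigma_2$-fixed vector. The two sub-arguments are executed differently, though. For the reduction to $\pm\alpha$, the paper expands $(\sigma_2)^{\ell}(H_{\alpha})$ in the orthogonal basis $v_0,\ldots,v_{\ell-1}$, reads off $\det(\sigma_2|_V)=\pm c_0$ from the companion-type matrix of $\sigma_2|_V$, and combines this with $\sum_t\lvert c_t\rvert^2=1$ to kill all coefficients but $c_0$; your direct computation $\langle(\sigma_2)^{\ell}(\alpha),(\sigma_2)^{j}(\alpha)\rangle=\langle(\sigma_2)^{\ell-j}(\alpha),\alpha\rangle=0$ reaches the same conclusion with less machinery, and you correctly note that \enquote{strongly orthogonal implies orthogonal} needs the two roots to be distinct and non-opposite, which you extract from the linear independence hypothesis. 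For the fixed-vector lemma, the paper averages over the cyclic group generated by $\sigma_2$ to project $H_{\alpha}$ into $\liek_1\cap\liek_2\cap\liet=\lies$ and then pairs the result against $\lies$ using the invariant metric and negative-definiteness, whereas you identify $(\liet_1)^{\sigma_2}$ with $\lies$ via the maximality of $S$ in $K$ and then invoke the direct-sum decomposition of \cref{prop-strongly-orthogonal}(4); these are two routes to the same underlying fact, namely that a $\sigma_2$-fixed vector in $\bigoplus_{\beta\in\Omega^{+}}\CC H_{\beta}$ would have to lie in $\lies$ while being annihilated by every root of $\Omega$. Both versions of both steps are sound, so the proposal stands as a complete, slightly more elementary alternative to the printed proof.
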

	\begin{proof}
		We  first show that $\liek_2$ intersects $\bigoplus_{\alpha\in\Omega^{+}}\CC H_{\alpha}$ trivially. To this end, note that
		by \cref{prop-strongly-orthogonal} any vector $H_{\alpha}$ with $\alpha \in \Omega^{+}$ is fixed by $\sigma_1$, so $H_{\alpha} \in \liek_1$. Therefore, the projection
		\begin{align*}
			q\colon\lieg &\rightarrow \liek_2, \\
			X &\mapsto \frac{1}{m+1}\sum_{j=0}^m(\sigma_2)^j(X),
		\end{align*}
		where $(m+1)$ is the order of $\sigma_2$, sends $H_{\alpha}$ to $q(H_{\alpha}) \in \liek_1 \cap \liek_2 \cap \liet = \lies$. But for any $Y \in \liealg{s}$ we have 
		\begin{align*}
			\langle Y, q(H_{\alpha})\rangle &= \langle Y, H_{\alpha}\rangle = \alpha(Y) = 0, 
		\end{align*}
		because $\langle\cdot,\cdot\rangle$ is $\sigma_2$-invariant, $\lies$ is fixed pointwise by $\sigma_2$, and $\alpha$ vanishes on $\lies$. Since $\langle\cdot,\cdot\rangle$ is negative-definite on $\lieg_{\RR}$, we see that $q(H_{\alpha}) = 0$. As $q|_{\liek_2} = \id$, it follows that no nontrivial vector is fixed in $\bigoplus_{\alpha\in\Omega^{+}}\CC H_{\alpha}$ by $\sigma_2$.    
		
		On the other hand, because each root $\alpha$ takes purely imaginary values on $\liet_{\RR}$ and $\langle\cdot,\cdot\rangle$ is real valued on $\lieg_{\RR}$, it follows 
		that $H_{\alpha}$ is contained in $\I\lieg_{\RR}$. Thus, $\langle\cdot,\cdot\rangle$ is positive definite on the real span $\bigoplus_{\beta\in\Delta^{+}} \RR H_{\beta}$.
		Now fix a root $\alpha \in \Omega^{+}$. Since $\sigma_2$ is an automorphism of $\lieg_{\RR}$, it hence restricts to an isometry of the real Euclidean vector space 
		\begin{align*}
			V &:= \operatorname{span}_{\RR}\left\{ (\sigma_2)^j(H_{\alpha}) \,|\, j \in \ZZ_{\geq 0} \right\}.
		\end{align*}
		Set $v_i := (\sigma_2)^i(H_{\alpha})$ and recall that $\ell$ is the maximal integer such that $v_0, \ldots, v_{\ell - 1}$ are linearly independent and hence form a basis of $V$.
		In fact, each vector $v_i$ equals $H_{\beta}$ for some root $\beta \in \Omega$, because the set $\Omega$ is $\sigma_2$ invariant, and since $v_i \neq \pm v_j$,
		the vectors $v_0, \ldots, v_{\ell - 1}$ must be even mutually perpendicular by \cref{prop-strongly-orthogonal}. Thus, if we express $(\sigma_2)^{\ell}(H_{\alpha})$ as 
		$(\sigma_2)^{\ell}(H_{\alpha}) = c_0v_0 + \ldots + c_{\ell-1}v_{\ell-1}$ for real numbers $c_0, \ldots, c_{\ell-1}$, then we may compute 
		\begin{align*}
			\langle H_{\alpha}, H_{\alpha}\rangle &= \langle (\sigma_2)^{\ell}(H_{\alpha}), (\sigma_2)^{\ell}(H_{\alpha})\rangle = \sum_{t=0}^{\ell-1} |c_t|^2\cdot \langle H_{\alpha}, H_{\alpha}\rangle.
		\end{align*}
		This gives $\sum_{t=0}^{\ell-1}|c_t|^2 = 1$. In terms of the basis $(v_i)_{i=0,\ldots, \ell-1}$ the endomorphism $\sigma_2|_V$ has matrix
		\begin{align*}
			\begin{pmatrix}
		 		0 & \ldots  & \ldots    & 0         & c_0   \\
		 		1 & \ddots  &           & \vdots    & c_1   \\
		 		0 & 1       & \ddots    & \vdots    & \vdots \\
		 		\vdots & \ddots  & \ddots    & 0         & c_{\ell-2} \\
				0  & \ldots      & 0          & 1 & c_{\ell-1}
			\end{pmatrix},
		\end{align*}
		which shows that $\det(\sigma_2|_V) = \pm c_0$. But since $\sigma_2|_V$ also is an isometry of an Euclidean space, we must have $c_0 = \pm 1$ and thus we deduce from 
		the equation $|c_0|^2 + \ldots + |c_{\ell-1}|^2 = 1$ that $c_1, \ldots, c_{\ell-1}$ must all vanish identically. We claim that actually $c_0 = -1$. Indeed, if $c_0 = 1$, then
		$\sigma_2(v_{\ell-1}) = v_0$ and $v_0 + \ldots + v_{\ell-1}$ would be a non-zero fixed point for $\sigma_2$ in $\bigoplus_{\alpha\in\Omega^{+}} \CC H_{\alpha}$, which, as we 
		already observed earlier, is impossible. Therefore, $c_0 = -1$ and $(\sigma_2)^{\ell}(\alpha) = -\alpha$.
	\end{proof}	
	
	The proof of \cite[proposition II.3.2]{thesis} carries over verbatimely to show:
	
	\begin{corollary}\label{cor-estimate-on-rank}
		We have $\dim\lies \leq \rank\liek_1 - |\Omega^{+}|$, with equality if $\sigma_2$ is inner.
	\end{corollary}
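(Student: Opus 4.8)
The plan is to translate the statement into a linear-algebra assertion about the joint action of $\sigma_1$ and $\sigma_2$ on the Cartan subalgebra $\liet$. The first step is to record the two reductions $\liet_1 = \liet^{\sigma_1}$ and $\lies = \liet_1^{\sigma_2}$. For the first one checks $T_1 = (T^{\sigma_1})_0$: the torus $T_1$ lies in $T = Z_G(T_1)$ and is fixed pointwise by $\sigma_1$, while conversely $(T^{\sigma_1})_0$ is a connected abelian subgroup of $(G^{\sigma_1})_0 = K_1$ contained in $T$, hence centralising $S$, so it sits in $Z_{K_1}(S) = T_1$. The second is the same argument for $\sigma_2$ acting on $T_1$ (this action makes sense, since $\sigma_2$ commutes with $\sigma_1$ and fixes $S$ pointwise, hence preserves $K_1$ and $T_1 = Z_{K_1}(S)$), using that $S$ is a maximal torus of $K$ and $(T_1^{\sigma_2})_0$ a torus in $K$ containing it. In particular $\dim\lies = \dim\liet_1^{\sigma_2}$ and $\rank\liek_1 = \dim\liet_1$, so the claimed bound becomes $\dim\liet_1^{\sigma_2} + |\Omega^{+}| \leq \dim\liet_1$.

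Granting this, the inequality is immediate from the two preceding \namecref{prop-strongly-orthogonal}s: by \cref{prop-strongly-orthogonal} each $H_{\alpha}$ with $\alpha\in\Omega^{+}$ is fixed by $\sigma_1$ and these vectors are linearly independent, so $\bigoplus_{\alpha\in\Omega^{+}}\CC H_{\alpha}$ is an $|\Omega^{+}|$-dimensional subspace of $\liet^{\sigma_1} = \liet_1$; and by \cref{prop-some-power-of-vanishing-root-is-reflected} it meets $\liet_1^{\sigma_2}$ trivially. Adding dimensions gives exactly the asserted estimate.

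For the equality when $\sigma_2$ is inner, the idea is to pass to the centraliser $L := Z_G(S)$. Writing $\sigma_2 = c_g$ with $g \in G$ and using that $\sigma_2$ fixes $S$ pointwise forces $g \in L$. Now $L$ is connected, with complexified Lie algebra $Z_{\lieg}(\lies) = \liet\oplus\bigoplus_{\alpha\in\Omega}\lieg_{\alpha}$ by \cref{prop-centralizers-of-subtori}; since the roots in $\Omega$ are strongly orthogonal by \cref{prop-strongly-orthogonal}, this algebra splits as $\liealg{z}\oplus\bigoplus_{\alpha\in\Omega^{+}}\liealg{sl}_2(\CC)$ with centre $\liealg{z} = \bigcap_{\alpha\in\Omega^{+}}\ker\alpha$. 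As $g$ lies in $L$, the inner automorphism $c_g$ is trivial on the centre of $L$, hence $\sigma_2$ acts trivially on $\liealg{z}$. On the other hand $\liet^{\sigma_1}$ decomposes $\sigma_1$-invariantly as $\liealg{z}^{\sigma_1}\oplus\bigoplus_{\alpha\in\Omega^{+}}\CC H_{\alpha}$ (by \cref{prop-strongly-orthogonal}(4) and the fact that each $H_{\alpha}$ is $\sigma_1$-fixed), while $\liealg{z}^{\sigma_1}\subseteq\liet_1^{\sigma_2}$. Counting dimensions then yields $\dim\liet_1^{\sigma_2}\geq\dim\liealg{z}^{\sigma_1} = \dim\liet_1 - |\Omega^{+}|$, which together with the inequality already established forces equality.

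The step I expect to be delicate is the equality case: one has to know precisely which group elements represent $\sigma_2$ and to exploit that fixing $S$ pins $g$ into $Z_G(S)$, so that $\sigma_2$ becomes an inner automorphism of $Z_G(S)$ and is therefore trivial on its centre. The reduction of $Z_{\lieg}(\lies)$ to a torus plus a sum of $\liealg{sl}_2$-blocks, which rests on strong orthogonality of $\Omega$, is what makes the identification of that centre — and hence this final dimension count — transparent; everything else is bookkeeping with fixed-point subspaces.
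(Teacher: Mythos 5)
Your proposal is correct and follows essentially the same route as the paper: the inequality comes from the decomposition $\liet_1 = \bigl(\bigcap_{\alpha\in\Omega^{+}}\ker\alpha\cap\liet_1\bigr)\oplus\bigoplus_{\alpha\in\Omega^{+}}\CC H_{\alpha}$ together with the fact that the second summand contains no $\sigma_2$-fixed vectors, and the equality case uses that an inner $\sigma_2$ fixing $S$ is conjugation by an element of the connected group $Z_G(S)$ and hence acts trivially on $\bigcap_{\alpha\in\Omega^{+}}\ker\alpha$. The paper phrases this last step by writing $n=\exp(X)$ with $X\in Z_{\lieg}(\lies)$ and computing $[Y,X]=0$ for $Y$ in that intersection, which is the same observation as your appeal to triviality of inner automorphisms on the centre.
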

	\begin{proof}
		By \cref{prop-strongly-orthogonal}, $\sigma_1$ fixes $V := \bigoplus_{\alpha\in\Omega^{+}} \CC H_{\alpha}$ pointwise, while no vector in $V$ is fixed by $\sigma_2$. 
		Setting $L := \bigcap_{\alpha\in\Omega^{+}} \ker \alpha$, we also note that the same \namecref{prop-strongly-orthogonal} gives the $\sigma_2$-invariant
		decomposition $\liet_1 = (L \cap \liet_1)\oplus V$
		which, as $\lies = (\liet_1)^{\sigma_2}$, then yields $\dim\lies \leq \rank\liek_1 - |\Omega^{+}|$. 
		
		If $\sigma_2$ is an inner automorphism, say with $\sigma_2 = c_n$ for 
		some element $n \in G$, then because $\sigma_2$ fixes $S$ pointwise, we see that $n \in Z_G(S)$. Since centralizers of tori are connected \cite[corollary IV.4.51]{Knapp},
		we may write $n = \exp(X)$ for some element $X \in Z_{\lieg}(\lies)$, and by \cref{prop-centralizers-of-subtori} we can decompose $X$ as
		$X = X_0 + X_{\Omega}$ for certain elements $X_0 \in \liet$ and $X_{\Omega} \in \bigoplus_{\alpha\in\Omega} \lieg_{\alpha}$. In particular, if
		$Y \in L$, then $[Y, X] = 0$. Thus $\Ad_n$ fixes $L \subseteq \liet$ pointwise. 
		As already remarked, no vector in $V$ is fixed by $\sigma_2$, so the fixed point set of $\sigma_2$ on $\liet$ is
		precisely $L$. Then $\lies = L \cap \liet_1$.
	\end{proof}	
		
	\begin{definition}\label{def-property-reflective}
		With the notation as in the paragraph preceeding \cref{prop-strongly-orthogonal}, we say that $\sigma_1$, $\sigma_2$ satisfy property $\star$ if $\sigma_2(\alpha) = -\alpha$ 
		holds for all $\alpha \in \Omega$.
	\end{definition}
	
	\begin{theorem}\label{thm-normal-form-automorphisms}
		Let $\Delta^{+}$ be a choice of positive roots on $\liet$ and $\Omega^{+} = \Omega \cap \Delta^{+}$. If $\sigma_1$ and $\sigma_2$ satisfy property $\star$, then there exists 
		$w \in N_G(T)$ with the following properties.
		\begin{enumerate}
		\item 
		$w\sigma_2w^{-1} = c_g\circ\nu$, where $\nu$ is an automorphism of $G$ with $\nu(\Delta^{+}) = \Delta^{+}$,
		
		\item 
		each root $\alpha \in \tilde{\Omega}$, where $\tilde{\Omega} := w(\Omega)$, is fixed by $\nu$, and 
		
		\item 
		$\Ad_g|_{\liet} = \prod_{\alpha\in\tilde{\Omega}^{+}} s_{\alpha}$.
		\end{enumerate}
	\end{theorem}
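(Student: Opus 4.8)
The plan is to first put $\sigma_2|_{\liet}$ into the desired normal form and only afterwards lift the result to $G$. Throughout, $\sigma_2|_{\liet}$ denotes the induced linear automorphism of $\liet$; it preserves $\I\liet_{\RR}$, acts there as an isometry, and permutes $\Delta$. Because $\sigma_1,\sigma_2$ satisfy property $\star$, \cref{prop-strongly-orthogonal} gives that $\sigma_2|_{\liet}$ is $-\id$ on $V := \bigoplus_{\alpha\in\Omega^{+}}\CC H_{\alpha}$ and preserves its orthogonal complement $L := \bigcap_{\alpha\in\Omega^{+}}\ker\alpha$, which contains $\lies$. Since the roots of $\Omega$ are strongly orthogonal, the reflections $s_{\alpha}$ with $\alpha\in\Omega^{+}$ commute, so $s_{\Omega} := \prod_{\alpha\in\Omega^{+}}s_{\alpha}$ is well defined; it is $-\id$ on $V$ and fixes $L$ pointwise. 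I would then study $\theta := s_{\Omega}\circ(\sigma_2|_{\liet})$, again an isometry of $\I\liet_{\RR}$ permuting $\Delta$.

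The heart of the proof is the claim that $\theta$ fixes a regular element of $\I\liet_{\RR}$. On the one hand $\theta$ fixes the subspace $W_{0} := \bigl(\bigoplus_{\alpha\in\Omega^{+}}\RR H_{\alpha}\bigr)\oplus\I\lies$ pointwise: it restricts to $\id$ on $\bigoplus_{\alpha\in\Omega^{+}}\RR H_{\alpha}$, being a composition of two maps that are $-\id$ there, and to $\id$ on $\I\lies$, since $s_{\Omega}=\id$ on $L\supseteq\lies$ while $\sigma_2|_{\liet}=\id$ on $\lies$ because $\sigma_2$ fixes $S$ pointwise; and the sum is direct since $\lies\subseteq L=V^{\perp}$. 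On the other hand $W_{0}$ contains a regular element: a subspace contained in $\bigcup_{\beta\in\Delta}\ker\beta$ would lie in a single $\ker\beta$, whereas for every $\beta\in\Delta$ one has $W_{0}\not\subseteq\ker\beta$ --- indeed $\beta(H_{\beta})=\langle H_{\beta},H_{\beta}\rangle\neq 0$ if $\beta\in\Omega$, and $\beta$ does not vanish on $\lies$, by definition of $\Omega$, if $\beta\notin\Omega$. Fixing a regular $X\in W_{0}$, the isometry $\theta$ preserves the positive system $\Delta^{+}_{X}:=\{\beta:\beta(X)>0\}$; choosing $\bar w\in W$ with $\bar w(\Delta^{+}_{X})=\Delta^{+}$, the conjugate $\bar w\theta\bar w^{-1}$ preserves $\Delta^{+}$, hence permutes $\Pi$ and is a diagram automorphism $\gamma_{0}$ of $\I\liet_{\RR}$. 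Letting $w\in N_{G}(T)$ represent $\bar w$ and $\tilde{\Omega}:=w(\Omega)$, conjugation by $w$ carries $s_{\Omega}$ to $\prod_{\alpha\in\tilde{\Omega}^{+}}s_{\alpha}$, so one obtains on $\I\liet_{\RR}$ the identity $(w\sigma_2 w^{-1})|_{\liet}=\bigl(\prod_{\alpha\in\tilde{\Omega}^{+}}s_{\alpha}\bigr)\circ\gamma_{0}$.

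To lift this, I would take $\nu\colon G\rightarrow G$ to be an automorphism induced by the Dynkin diagram automorphism realizing $\gamma_{0}$, so that $\nu$ preserves $T$ and $\Delta^{+}$ and $\nu|_{\liet}=\gamma_{0}$; this is property (1). Then $\mu:=(w\sigma_2 w^{-1})\circ\nu^{-1}$ preserves $T$ and restricts to $\prod_{\alpha\in\tilde{\Omega}^{+}}s_{\alpha}\in W$ on $\liet$; as $\mu$, $\sigma_2$ and $\nu$ share the same image in $\Out(G)$---the coset of $\gamma_{0}$, which is unchanged by multiplying $\sigma_2|_{\liet}$ by the Weyl element $s_{\Omega}$ or conjugating by $\bar w$---the automorphism $\mu$ is inner by the structure theory of automorphisms recalled in \cref{sec-preliminaries}. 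Hence $\mu=c_{g}$ for some $g\in N_{G}(T)$ with $\Ad_{g}|_{\liet}=\prod_{\alpha\in\tilde{\Omega}^{+}}s_{\alpha}$, which is property (3). For property (2), from $\sigma_2(\alpha)=-\alpha$ for $\alpha\in\Omega$ and $\tilde{\Omega}=w(\Omega)$ one gets $(w\sigma_2 w^{-1})(\alpha)=-\alpha$ for all $\alpha\in\tilde{\Omega}$, whence $\nu(\alpha)=\gamma_{0}(\alpha)=\bigl(\prod_{\beta\in\tilde{\Omega}^{+}}s_{\beta}\bigr)\bigl((w\sigma_2 w^{-1})(\alpha)\bigr)=\bigl(\prod_{\beta\in\tilde{\Omega}^{+}}s_{\beta}\bigr)(-\alpha)=\alpha$, using that the pairwise orthogonal reflections $s_{\beta}$, $\beta\in\tilde{\Omega}^{+}$, send such an $\alpha$ to $-\alpha$.

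The step I expect to cause the most trouble is the lifting in the last paragraph: passing from the combinatorial map $\gamma_{0}$ on $\I\liet_{\RR}$ to a genuine automorphism $\nu$ of $G$, and confirming that the leftover $\mu$ is inner as an automorphism of $G$ rather than merely on $\liet$, forces one to invoke the structure theory of automorphisms with some care, and it is cleanest to reduce to $G$ semisimple --- and, if one wishes, simply connected --- at the outset, so that there is no ambiguity in what the Dynkin diagram of $G$ should be. By contrast, the production of the Weyl group element $w$, which for pairs of involutions in \cite{thesis} was the crux of the matter, here comes almost for free once one observes that $s_{\Omega}\circ(\sigma_2|_{\liet})$ fixes the regular element $X\in W_{0}$.
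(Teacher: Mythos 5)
Your argument is correct, but it follows a genuinely different route from the paper's. The paper proves \cref{thm-normal-form-automorphisms} by induction on $\dim G$: it picks a root $\delta \in \Omega$, conjugates it to a dominant root, uses the uniqueness of dominant roots and Chevalley's Lemma to split $s_{\delta}$ off from $\sigma_2|_{\liet}$, passes to the compact subgroup $G'$ built from the roots orthogonal to $\delta$, and assembles the normal form from the inductive one. You replace all of this by the single observation that $\theta = \bigl(\prod_{\alpha\in\Omega^{+}} s_{\alpha}\bigr)\circ\sigma_2|_{\liet}$ fixes pointwise the subspace $\bigoplus_{\alpha\in\Omega^{+}}\RR H_{\alpha}\oplus\I\lies_{\RR}$, which contains a $\Delta$-regular element --- your case split on $\beta \in \Omega$ versus $\beta \notin \Omega$ is exactly the right verification --- so that $\theta$ stabilizes a Weyl chamber and becomes positivity-preserving after a single Weyl conjugation. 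This is shorter and arguably more conceptual; what the paper's induction buys is that the same recursive descent through the subalgebras $\lieg'$ resurfaces later in the explicit "chain cascade" description of $\Omega$, whereas your argument treats $\Omega$ all at once. Two small repairs. First, that $\sigma_2|_{\liet} = -\id$ on $V$ follows from property $\star$ together with $\sigma_2(H_{\alpha}) = H_{\sigma_2(\alpha)}$, not from \cref{prop-strongly-orthogonal} itself, which supplies only the orthogonal decomposition $\liet = L\oplus V$ and the commutativity of the reflections. Second, the lifting step you worry about can be avoided entirely: choose any $g \in N_G(T)$ with $\Ad_g|_{\liet} = \prod_{\alpha\in\tilde{\Omega}^{+}} s_{\alpha}$ and simply \emph{define} $\nu := c_{g^{-1}}\circ (w\sigma_2 w^{-1})$; then $\nu|_{\liet} = \gamma_0$ preserves $\Delta^{+}$ and fixes $\tilde{\Omega}$ pointwise, which is all the theorem asserts (it does not require $\nu$ to be induced by a diagram automorphism), and the question of how a diagram automorphism should act on the centre of a non-semisimple $G$ never arises, so no reduction to semisimple $G$ is needed.
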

	\begin{proof}
		The proof is by induction on the dimension of $G$. If $G$ is a compact, connected Lie group of dimension $1$,
		then $G \cong S^1$ as Lie groups. In this case, there are no roots on $G$ and the statement of the theorem is vacuous. This establishes the induction base.
		Suppose now that the statement of the theorem is true for all compact, connected Lie groups of dimension at most $r$ and let $G$ be a compact, connected Lie group of dimension $r + 1$. We distinguish the cases $\Omega = \emptyset$ and $\Omega \neq \emptyset$. Suppose first that $\Omega$ is empty. This means that no $\lieg$-root vanishes on $\lies$, and hence $\lies$ contains regular elements for the $\liet$-roots. We fix one such regular element and denote the resulting notion of positivity by $R^{+}$. Since $G$ is compact, there exists a Weyl group element $w$ such that $w(R^{+}) = \Delta^{+}$, and then $w\sigma_2 w^{-1}$ preserves $\Delta^{+}$. Thus, the induction step follows in this case.
		Now assume that $\Omega$ is nonempty and fix one root $\delta \in \Omega$. Since the orbit $W(G, T)\cdot \delta$ of the Weyl group $W(G, T)$ contains a dominant root by \cite[corollary 2.68]{Knapp}, we may assume, conjugating $\sigma_2$ by such a Weyl group element if necessary, that $\delta$ is dominant, that is, that $\langle\delta,\alpha\rangle \geq 0$ holds for all $\alpha \in \Delta^{+}$. Now express $\sigma_2$ as $\sigma_2 = c_x\circ\rho$, where $x \in N_G(T)$ and where $\rho$ is an automorphism of $G$ with $\rho(\Delta^{+}) = \Delta^{+}$. 
		
		\begin{claim}\label{thm-normal-form-automorphisms-claim-dynkin-auto-fixes-dominant-root}
		$\rho(\delta) = \delta$. 
		\end{claim}
		\begin{proofofclaim}
			Indeed, by assumption we have $-\delta = \sigma_2(\delta)$, and hence also
			\begin{align*}
				-\Ad_{x^{-1}}(\delta) &= \rho(\delta);
			\end{align*}
			now the set of $\liet$-roots $\Delta$ decomposes as a disjoint union of mutually perpendicular, irreducible components $\Delta = \Delta_1 \cup \ldots \cup \Delta_m$. Since $\Ad_x$ is a Weyl group element, it maps each such irreducible component into itself, and hence $\delta$ and $-\Ad_{x^{-1}}(\delta)$ are contained in the same irreducible component. Moreover, since $\rho$ just permutes the simple roots and because $\rho$ is an isometry with respect to $\langle\cdot,\cdot\rangle$ (because $\sigma_2$ and $\Ad_x$ are), we see that $\rho(\delta)$ is a dominant root which is contained in the same irreducible component as $\delta$. However, in an irreducible root system there exist at most two dominant roots, namely the highest long and, if there are two root lengths, the highest short root, and since $\rho$ preserves lengths, we conclude that $\rho(\delta) = \delta$.
		\end{proofofclaim}
		
		This means that $\Ad_x(\delta) = -\delta$, so $s_{\delta}\circ(\Ad_x|_{\liet})$ is a Weyl group element preserving $\delta$. By Chevalley's Lemma (cf. \cite[proposition 2.72]{Knapp}), $\Ad_x|_{\liet}$ then is the product of $s_{\delta}$ and certain root reflections $s_{\beta}$ with $\langle\beta,\delta\rangle = 0$. Let $\Delta' \subseteq \Delta$ be the set of all roots $\beta$ with $\langle\beta,\delta\rangle = 0$. Since $\sigma_1$ and $\sigma_2$ map $\delta$ to $\pm\delta$ by \cref{prop-strongly-orthogonal}, we conclude that $\sigma_1$ and $\sigma_2$ leave $\Delta'$ invariant. Hence, they also leave invariant the subalgebra
		\begin{align*}
		\lieg' &= \liet'\oplus\bigoplus_{\alpha\in\Delta'}\lieg_{\alpha}\text{, where }\liet = \CC H_{\delta}\oplus\liet'\text{ and }\liet' = \ker \delta.
		\end{align*}
		Note that, because $\lieg$ is the complexification of $\lieg_{\RR}$, also $\lieg'$ is the complexification of the real subalgebra $\lieg'_{\RR} := \lieg' \cap \lieg_{\RR}$. Let $G' \subseteq G$ be the connected Lie subgroup of $G$ with Lie algebra $\lieg'_{\RR}$. 
		
		\begin{claim}\label{thm-normal-form-automorphisms-claim-subgroup-is-compact}
		\textit{$G'$ is compact.}
		\end{claim}
		\begin{proofofclaim}
			Indeed, the subalgebra $\lieg'' = \liet''\oplus\bigoplus_{\alpha\in\Delta'}\lieg_{\alpha}$ with $\liet'' = \sum_{\alpha\in\Delta'} \CC H_{\alpha}$ is semisimple, hence if we let $G'' \subseteq G$ be the connected subgroup corresponding to $\lieg''$, then $G''$ is compact. We also note that the connected Lie subgroup $T' \subseteq T$ with Lie algebra $\liet' \cap \lieg_{\RR}$ is a torus, because it is the fixed point set of $s_{\delta}$ on $\liet \cap \lieg_{\RR}$ and because $s_{\delta}$ is the restriction of some inner automorphism of $\lieg$ to $\liet$. Hence, $T'\cdot G'' \subseteq G$ is a compact, connected Lie group containing $G'$. But $T'\cdot G'' \subseteq G'$, and so it follows that $T'\cdot G'' = G'$.
		\end{proofofclaim}
		
		Write $\sigma_i'$ for the automorphism $G'\rightarrow G'$ obtained by restricting $\sigma_i$ to $G'$ and let $K_i'$ be the identity component of the fixed point set of 
		$\sigma_i'$.
		
		\begin{claim}\label{thm-normal-form-automorphisms-claim-maximal-torus}
		$S$ is a maximal torus of $K_1'\cap K_2'$. The set of $\liet'$-roots vanishing on $\lies$ is precisely given by the set of all elements of the form $\alpha|_{\liet'}$ with $\alpha \in \Omega - \{\pm\delta\}$.
		\end{claim}
		\begin{proofofclaim}
			Because $\delta \in \Omega$, we know that $\lies \subseteq \ker\delta$, and therefore $\lies \subseteq \liek_1' \cap \liek_2'$. Since $\liek_1' \cap \liek_2'$ is a subalgebra of $\liek_1\cap\liek_2$, and since by assumption $\lies$ is a maximal torus of the latter algebra, $\lies$ also is a maximal torus of the former algebra. Now note that, by construction, 
			the $\liet'$-roots are precisely the restrictions of roots in $\Delta'$ to $\liet'$. Hence, if a root $\beta$ in $\Delta'$ vanishes on $\lies$, then $\beta = \alpha|_{\liet'}$ for some $\liet$-root $\alpha$ and $\alpha$ vanishes on $\lies$. That is, $\alpha \in \Omega$. Since $\delta \not\in \Delta'$, we must have $\alpha \in \Omega-\{\pm\delta\}$.
		\end{proofofclaim}
		
		Let $T' \subseteq T$ be the subtorus corresponding to $\liet'$ and $T_1' = Z_{K_1'}(S)$. We have $T_1' \subseteq Z_{K_1}(S) \cap G'$ and so $T_1' \subseteq T \cap G'$. 
		It is immediate that $T' = T \cap G'$, whence $T_1' \subseteq T'$ and $T'$ is a maximal torus of $G'$ containing $T_1'$. As $Z_{G'}(T_1')$ is the unique 
		maximal torus in $G'$ containing $T_1'$, we thus see that $T' = Z_{G'}(T_1')$. Therefore, if we endow the maximal torus $T'$ of the compact connected Lie group $G'$ with the notion of positivity $(\Delta' \cap \Delta^{+})|_{\liet'}$, then the induction hypothesis applies to $\sigma_1'$ and $\sigma_2'$. We hence find a Weyl group element $w \in N_{G'}(T')$ such that $w\sigma_2'w^{-1} = \Ad_h\circ\mu$, such that $h \in G'$ corresponds to the product of the root reflections $s_{\beta}$ with $\beta \in w(\Omega')$, and such that $\mu$ permutes the positive roots on $\liet'$ and fixes the elements of $w(\Omega')$. Note that since the Weyl group of $\lieg'$ is generated by the root reflections $s_{\beta}$ with $\beta \in \Delta'$, we have $N_{G'}(T') \subseteq N_G(T)$ and $w(\delta) = \delta$. Hence we can express $w\sigma_2w^{-1}$ as $w\sigma_2w^{-1} = \Ad_g\circ \nu$ for some automorphism $\nu$ with $\nu(\Delta^{+}) = \Delta^{+}$ and some element $g \in N_G(T)$, and moreover $(\Ad_g\circ\nu)(\delta) = -\delta$. The same proof as in \cref{thm-normal-form-automorphisms-claim-dynkin-auto-fixes-dominant-root} now shows that $\nu(\delta) = \delta$ and $\Ad_g(\delta) = -\delta$, because $w\sigma_2w^{-1}$ is an isometry. In particular, $\Ad_g$ and $\nu$ preserve $\lieg'$, and so 
		\begin{align*}
		c_g|_{G'}\circ \nu|_{G'} &= w\sigma_2'w^{-1} = c_h\circ\mu;
		\end{align*}
		but both $\nu|_{G'}$ and $\mu$ preserve the notion of positivity on $\liet'$, and hence so must do $c_{h^{-1}}\circ c_g|_{G'}$. On the other hand, using Chevalley's Lemma 
		again it follows that
		$g = g_1 g_2$ where $g_2 \in N_G(T)$ corresponds to $s_{\delta}$ and $g_1 \in G'$ corresponds to a certain product of root reflections $s_{\beta}$ with $\beta \in \Delta'$.
		Since $s_{\delta}$ fixes $\liet'$ pointwise and no non-trivial Weyl group element fixes a Weyl chamber, we see that $g_1T' = hT'$ and that $\nu|_{\liet'} = \mu|_{\liet'}$. In particular, 
		\begin{align*}
		\Ad_g|_{\liet'} &= \Ad_h|_{\liet'} = \prod_{\alpha\in w(\Omega') \cap \Delta^{+}} s_{\alpha}
		\end{align*}
		But $\Ad_g(\delta) = -\delta$ and $w(\delta) = \delta$, so $w(\Omega) = w(\Omega') \cup \{\pm\delta\}$ and 
		hence $\Ad_g|_{\liet} = \prod_{\alpha} s_{\alpha}$, where $\alpha$ now ranges over the elements of $w(\Omega) \cap \Delta^{+}$. Since $\nu$ fixes $\delta$ and $\mu$ fixes $w(\Omega')$, we conclude that $\nu$ fixes the elements of $w(\Omega)$, and thus the induction claim is proved.
	\end{proof}
	
	We will have use for yet another generalization of a proposition that was proven for $\ZZ_2\times\ZZ_2$-symmetric spaces in \cite{thesis}, cf.\,\cite[proposition II.3.5]{thesis}.
	\begin{corollary}\label{cor-inner-automorphisms-normal-form-for-roots-vanishing-on-maximal-torus}
		Suppose that $\sigma_1$ is an inner automorphism and that $\sigma_2|_{\liet}$ is an involution. There exists a notion of positivity $\Delta^{+}$ 
		such that the conclusions of \cref{thm-normal-form-automorphisms} hold with $w = \id$. Moreover, if $\beta$ is a root with $\nu(\beta) = \beta$, then the integer
		\begin{align*}
			\sum_{\alpha\in\Omega^{+}} \frac{2\langle \alpha, \beta\rangle}{\langle\alpha,\alpha\rangle}
		\end{align*}
		is even. If $\sigma_2$ is an inner automorphism as well, then this integer is even for all roots $\beta$.
	\end{corollary}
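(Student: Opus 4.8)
The plan is to turn the parity statement into a statement about a single character of the maximal torus $T$. First I would note that the hypotheses already force property $\star$, so that \cref{thm-normal-form-automorphisms} is available: since $\sigma_2|_{\liet}$ is an involution, $\sigma_2^2$ fixes $\liet$ pointwise, hence for each $\alpha\in\Omega$ the integer $\ell$ of \cref{prop-some-power-of-vanishing-root-is-reflected} cannot be $2$ (that would force $\alpha=\sigma_2^2(\alpha)=-\alpha$), so $\ell=1$ and $\sigma_2(\alpha)=-\alpha$. Applying \cref{thm-normal-form-automorphisms} to an auxiliary positive system $\Delta_0^{+}$ yields $w_0\in N_G(T)$, and I would then replace $\Delta_0^{+}$ by $\Delta^{+}:=w_0^{-1}(\Delta_0^{+})$. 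Transporting conclusions (1)--(3) of that theorem through $w_0^{-1}$, and using $w_0^{-1}s_\alpha w_0=s_{w_0^{-1}(\alpha)}$ together with $w_0^{-1}\bigl(w_0(\Omega)\cap\Delta_0^{+}\bigr)=\Omega\cap\Delta^{+}=\Omega^{+}$, shows that for $\Delta^{+}$ the conclusions hold with $w=\id$. Thus from now on $\sigma_2=c_g\circ\nu$ with $\nu(\Delta^{+})=\Delta^{+}$, with $\nu$ fixing each $\alpha\in\Omega$, and with $\Ad_g|_{\liet}=\prod_{\alpha\in\Omega^{+}}s_\alpha$.

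Next I would pin down $\sigma_1$. Since $\sigma_1$ is inner, $\rank K_1=\rank G$; as $T_1=Z_{K_1}(S)$ is a maximal torus of $K_1$ contained in the maximal torus $T=Z_G(T_1)$ of $G$, this forces $T_1=T$, so $\sigma_1$ fixes $\liet$ pointwise and hence $\sigma_1=c_h$ for some $h\in Z_G(T)=T$. Every root $\beta$ then satisfies $\sigma_1(E_\beta)=\beta(h)E_\beta$ on $\lieg_\beta$; since $\sigma_1^2=\id$ the element $h^2$ is central, and roots are trivial on the centre, so $\beta(h)=\pm1$. Consequently $\chi\mapsto\chi(h)$ defines a homomorphism $\epsilon$ from the root lattice $Q$ of $G$ to $\ZZ/2\ZZ$ with $(-1)^{\epsilon(\chi)}=\chi(h)$, and by \cref{prop-strongly-orthogonal}(1) one has $\lieg_\alpha\subseteq\liep$, i.e.\ $\epsilon(\alpha)=1$, for every $\alpha\in\Omega$.

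Now fix a root $\beta$ with $\nu(\beta)=\beta$. As $\nu$ fixes each $\alpha\in\Omega$ it commutes with each $s_\alpha$, so $\sigma_2$ maps $\beta$ to $\bigl(\prod_{\alpha\in\Omega^{+}}s_\alpha\bigr)(\beta)$; by strong orthogonality of $\Omega$ (\cref{prop-strongly-orthogonal}(3)) the cross terms vanish and
\begin{align*}
	\beta-\sigma_2(\beta) &= \sum_{\alpha\in\Omega^{+}}\frac{2\langle\beta,\alpha\rangle}{\langle\alpha,\alpha\rangle}\,\alpha\in Q.
\end{align*}
On the other hand $\sigma_2$ commutes with $\sigma_1$, hence preserves its eigenspaces $\liek_1$ and $\liep$, so $\lieg_{\sigma_2(\beta)}=\sigma_2(\lieg_\beta)$ lies in the same one as $\lieg_\beta$; that is, $\epsilon(\sigma_2(\beta))=\epsilon(\beta)$. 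Applying $\epsilon$ to the displayed identity, using $\epsilon(\alpha)=1$, now gives $\sum_{\alpha\in\Omega^{+}}\tfrac{2\langle\beta,\alpha\rangle}{\langle\alpha,\alpha\rangle}\equiv0\pmod2$, which is the assertion. Finally, if $\sigma_2$ is inner then $\nu=c_{g^{-1}}\circ\sigma_2$ is inner as well; an inner automorphism of $G$ that preserves $T$ and $\Delta^{+}$ acts trivially on $\liet$ (no nontrivial Weyl group element fixes a Weyl chamber), so $\nu(\beta)=\beta$ for every root $\beta$ and the argument above applies to all of them.

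I do not expect any single step to present a serious obstacle; the proof is a generalization of the one for $\ZZ_2\times\ZZ_2$-symmetric spaces. The two places that need genuine care are the convention-tracking required to transport the normal form of \cref{thm-normal-form-automorphisms} through $w_0$, and the point that $\epsilon$ is well defined on all of $Q$ (and not merely additive along root strings) — which is exactly where inner-ness of $\sigma_1$, via $h\in T$ and $h^2\in Z(G)$, is used.
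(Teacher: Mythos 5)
Your proposal is correct, and its first half (establishing property $\star$ from the involutivity of $\sigma_2|_{\liet}$ via \cref{prop-some-power-of-vanishing-root-is-reflected}, then transporting the normal form of \cref{thm-normal-form-automorphisms} through $w_0$ to make $w = \id$) coincides with the paper's argument. Where you genuinely diverge is in the parity statement. The paper writes $\sigma_1 = c_t$ with $t\in T$, splits $t = t_{+}t_{-}$ according to the eigenspace decomposition $\liet = L^{+}\oplus(L^{-}\oplus V)$ of $\sigma_2|_{\liet}$, deduces from the commutativity of $\sigma_1$ and $\sigma_2$ that $c_{t_{-}}$ is an involution, and then works in exponential coordinates: writing $t_{-} = \exp(X + \I\pi\sum_{\alpha}\frac{m_{\alpha}}{\langle\alpha,\alpha\rangle}H_{\alpha})$, it extracts that each $m_{\alpha}$ is odd from $\Ad_{t_{-}}|_{\lieg_{\alpha}} = -\id$ and then the parity of $\sum_{\alpha}m_{\alpha}\frac{2\langle\alpha,\beta\rangle}{\langle\alpha,\alpha\rangle} - \I\beta(X)$ from $(\Ad_{t_{-}})^2 = \id$. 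You instead package the same input --- $\sigma_1 = c_h$ with $h\in T$, $h^2$ central, $\lieg_{\alpha}\subseteq\liep$ for $\alpha\in\Omega$ --- into a homomorphism $\epsilon\colon Q\rightarrow\ZZ/2\ZZ$ on the root lattice and apply it to the identity $\beta - \sigma_2(\beta) = \sum_{\alpha\in\Omega^{+}}\frac{2\langle\beta,\alpha\rangle}{\langle\alpha,\alpha\rangle}\alpha$, using that $\sigma_2$ preserves the eigenspaces of $\sigma_1$ to get $\epsilon(\sigma_2(\beta)) = \epsilon(\beta)$. This exploits the commutativity of $\sigma_1$ and $\sigma_2$ in a different (and cleaner) way, avoids the decomposition $t = t_{+}t_{-}$ and the explicit coefficients $m_{\alpha}$ altogether, and makes transparent exactly where inner-ness of $\sigma_1$ enters, namely in $\epsilon$ being defined on all of $Q$. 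The only care needed is the one you already flag: $\beta(h)$ must be read as the value of the global character $e^{\beta}$ at $h$, and the identity for $\beta - \sigma_2(\beta)$ requires $\nu(\beta) = \beta$ and the strong orthogonality of $\Omega$ from \cref{prop-strongly-orthogonal}, both of which you invoke correctly.
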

	\begin{proof}
		Start with an arbitrary notion of positivity $R^{+}$ on $\Delta$. Because $\sigma_2|_{\liet}$ is an involution, $\sigma_1$ and $\sigma_2$ satisfy property $\star$.
		Hence, by \cref{thm-normal-form-automorphisms} there exists a Weyl group element $w$ such that $w\sigma_2w^{-1} =  c_k\circ\mu$ and such that $\Ad_k$, $\mu$ are
		as in \cref{thm-normal-form-automorphisms}. Then $\sigma_2 = c_g\circ \nu$, where $g = w^{-1}(k)$ and $\nu = w^{-1}\mu w$. Note that $\Ad_g|_{\liet}$ represents 
		$\prod_{\alpha\in\Omega^{+}} s_{\alpha}$ and that $\mu$ fixes each element in $\Omega^{+}$. Moreover, $\mu$ permutes the roots in $\Delta^{+} := w^{-1}(R^{+})$,
		and so the conclucions of \cref{thm-normal-form-automorphisms} hold for the notion of positivity $\Delta^{+}$.
	
		Use \cref{prop-strongly-orthogonal} to decompose $\liet$ as $\liet = L\oplus V$ with $V = \bigoplus_{\alpha\in\Omega^{+}}\CC H_{\alpha}$ and 
		$L = \bigcap_{\alpha\in\Omega^{+}} \ker\alpha$. We know that $\nu$ fixes each element in $\Omega^{+}$. Thus, this decomposition is a decomposition
		into $\Ad_g$- and $\nu$-invariant subspaces. In fact, $L$ is precisely the $1$-eigenspace and $V$ is precisely the $(-1)$-eigenspace of $\Ad_g|_{\liet}$,
		and we may further decompose $L$ as $L = L^{+}\oplus L^{-}$, where $L^{+}$ and $L^{-}$ are, respectively, the $1$- and $(-1)$-eigenspaces of 
		$\nu$ on $L$.
		
		Because $\sigma_1$ is an inner automorphism fixing $T$ pointwise, we must have $\sigma_1 = c_t$ with $t \in T$. Decompose now $t$ as
		$t = t_{+}t_{-}$, where $t_{+}$ is contained in the image of $L^{+}$ under the exponential map and $t_{-}$ is contained in the image of $L^{-}\oplus V$.
		Then $\sigma_2(t_{-}) = (t_{-})^{-1}$ and $\sigma_2(t_{+}) = t_{+}$. Since $\sigma_2$ and $\sigma_1$ commute, this yields
		\begin{align*}
			c_{t_{+}}\circ c_{t_{-}} &= \sigma_1 = \sigma_2\circ\sigma_1\circ(\sigma_2)^{-1} = c_{t_{+}}\circ (c_{t_{-}})^{-1},
		\end{align*}
		and so $c_{t_{-}}$ must be an involution. Let $\beta \in \Omega$ be arbitrary. By \cref{prop-strongly-orthogonal} $\sigma_1$ acts as $-\id$ on $\lieg_{\beta}$ for all
		$\beta \in \Omega$, while by construction $\Ad_{t_{+}}$ is the identity on each such root space. Therefore, $\Ad_{t_{-}}|_{\lieg_{\beta}} = -\id$ must hold as well.
		Hence, if we write $t_{-}$ as $t_{-} = \exp(X + \I\pi\sum_{\alpha\in\Omega^{+}} \frac{m_{\alpha}}{\langle\alpha,\alpha\rangle}H_{\alpha})$ for 
		certain real coefficients $m_{\alpha}$ and an element $X \in L^{-}$, and if we recall that the elements of $\Omega$ are mutually perpendicular by 
		\cref{prop-strongly-orthogonal}, then we see that 
		\begin{align*}
			-\id &= \Ad_{t_{-}}|_{\lieg_{\beta}} = \mathrm{e}^{\I\pi m_{\beta}}.
		\end{align*} 
		This shows that $m_{\beta}$ must be an odd number. Similarly, writing out the equation $\id = (\Ad_{t_{-}})^2|_{\lieg_{\beta}}$, where $\beta$ now is an
		arbitrary root, we conclude that 
		\begin{align*}	
			\sum_{\alpha\in\Omega^{+}} m_{\alpha}\frac{2\langle\alpha,\beta\rangle}{\langle\alpha,\alpha\rangle} - \I\beta(X)
		\end{align*}
		must be an even number. If $\beta$ is a root with $\nu(\beta) = \beta$, then $\beta(X) = 0$, because $X \in L^{-}$, and since $\frac{2\langle\alpha,\beta\rangle}{\langle\alpha,\alpha\rangle}$ is always an integer and $m_{\alpha}$ is odd, the assertion follows.
	\end{proof}

	\section{Property $\star$ on simple Lie groups}\label{sec-reflection}
	
	As in the previous section, we let $G$ be a compact, connected Lie group and we assume that we are given an involutive automorphism $\sigma_1$ as well as a finite-order order 
	automorphism $\sigma_2$ on $G$ which commutes with $\sigma_1$. The identity component of $G^{\sigma_1} \cap G^{\sigma_2}$ is denoted $K$, the identity component of 
	$G^{\sigma_1}$ is denoted $K_1$, and $S \subseteq K$ is an arbitrary maximal torus. Recall that $T_1 = Z_{K_1}(S)$ is a maximal torus in $K_1$ and that $T = Z_G(T_1)$
	is a maximal torus in $G$. We let $\Delta$ be the $G$-roots with respect to $\liet$ and let $\Omega$ be the roots restricting to zero on $\lies$. Within this section, we
	also fix an auxiliary notion of positivity $\Delta^{+}$ on $\Delta$. With this notation, the goal of this section is to prove the following generalization of \cite[proposition II.2.2]{thesis}.
	
	\begin{theorem}\label{thm-reflection}
		If $G$ is simple, then for any root $\alpha \in \Omega$ we have $\sigma_2(\alpha) = -\alpha$.
	\end{theorem}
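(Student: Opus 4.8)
My plan is to deduce the statement from \cref{prop-some-power-of-vanishing-root-is-reflected}. Since that proposition already provides $(\sigma_2)^{\ell}(\alpha) = -\alpha$, where $\ell \geq 1$ is the length of the $\sigma_2$-chain through $\alpha$, it suffices to prove $\ell = 1$ for every $\alpha \in \Omega$, i.e.\ that $\sigma_2(\alpha)$ lies on the line $\CC\alpha$. I would reformulate this using the $\sigma_1$- and $\sigma_2$-invariant reductive subalgebra $\liealg{l} := Z_{\lieg}(\lies) = \liet\oplus\bigoplus_{\alpha\in\Omega}\lieg_{\alpha}$ of \cref{prop-centralizers-of-subtori}: as the roots in $\Omega$ are strongly orthogonal (\cref{prop-strongly-orthogonal}), $[\liealg{l},\liealg{l}]$ is the direct sum of the pairwise commuting subalgebras $\liealg{sl}_2^{(\alpha)} := \lieg_{-\alpha}\oplus\CC H_{\alpha}\oplus\lieg_{\alpha}$, $\alpha\in\Omega^{+}$, which $\sigma_2$ permutes, and the identity $\liek_1\cap\liealg{l} = Z_{\liek_1}(\lies) = \liet_1$ shows that $[\liealg{l},\liealg{l}]^{\sigma_1} = \bigoplus_{\alpha\in\Omega^{+}}\CC H_{\alpha}$ is a maximal torus of $[\liealg{l},\liealg{l}]$ carrying no nonzero $\sigma_2$-fixed vector. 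If $\sigma_2$ fixes a summand $\liealg{sl}_2^{(\alpha)}$ setwise it must act on $\CC H_{\alpha}$ by $-1$, which is precisely $\sigma_2(\alpha) = -\alpha$; and an orbit of summands of length $m\geq 2$ on whose $H_{\alpha}$'s the induced sign is trivial produces a nonzero $\sigma_2$-fixed vector in $\bigoplus_{\alpha\in\Omega^{+}}\CC H_{\alpha}$, which \cref{prop-some-power-of-vanishing-root-is-reflected} forbids. So the whole theorem reduces to excluding $\sigma_2$-orbits of length $m\geq 2$ (necessarily of induced sign $-1$) on the set of summands $\liealg{sl}_2^{(\alpha)}$.

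To rule these out I would induct on $\dim G$, mimicking the proof of \cref{thm-normal-form-automorphisms}. If $\Omega = \emptyset$ there is nothing to show. Otherwise I pick $\delta\in\Omega$ and a notion of positivity for which $\delta$ is dominant, hence the highest long or highest short root of the connected Dynkin diagram of $G$. \emph{Granting that $\sigma_2(\delta) = -\delta$}, both $\sigma_1$ and $\sigma_2$ preserve the set $\Delta'$ of roots perpendicular to $\delta$ and the associated reductive subalgebra $\lieg'$; by the arguments of \cref{thm-normal-form-automorphisms-claim-subgroup-is-compact,thm-normal-form-automorphisms-claim-maximal-torus}, $\lieg'$ is the complexified Lie algebra of a compact connected subgroup $G'\subseteq G$ on which $S$ is still a maximal torus of the intersection of the fixed-point sets of $\sigma_1|_{G'}$ and $\sigma_2|_{G'}$, and whose associated set of vanishing roots is exactly $\Omega - \{\pm\delta\}$. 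The induction hypothesis, applied to (the simple factors of) $\lieg'$, then gives $\sigma_2(\alpha) = -\alpha$ for all remaining roots, so the orbit of $\delta$ — and, $\delta$ being arbitrary, every orbit — has length one.

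The main obstacle, and the reason \cref{sec-reflection} is long, is that the step $\sigma_2(\delta) = -\delta$ for the chosen dominant $\delta$ has to be established \emph{before} the induction can begin, so it cannot come from the inductive hypothesis. Here I expect to combine the information $(\sigma_2)^{\ell}(\delta) = -\delta \in W(G,T)\cdot\delta$ from \cref{prop-some-power-of-vanishing-root-is-reflected} with the rigidity imposed by $\sigma_1$ being an involution commuting with $\sigma_2$ — so that $\sigma_1$ fixes $\delta$ and acts as $-\id$ on $\lieg_{\pm\delta}$ — and with the classification of finite-order automorphisms and of the admissible ranks of their fixed-point sets (\cref{prop-rank-of-fixed-point-set-of-automorphism,prop-classification-of-involutions}) in order to discard the configurations with $\sigma_2(\delta)\neq\pm\delta$. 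The genuinely exceptional case is $G$ of type $\liealg{d}_4$, where $\sigma_2|_T$ may have order $3$ or $6$ and $\delta^{\perp}$ is $\liealg{a}_1\oplus\liealg{a}_1\oplus\liealg{a}_1$, so that $\sigma_2$ could a priori cyclically permute three $\liealg{sl}_2$-summands; that this does not occur has to be verified by hand. Finally, the induction requires the bookkeeping that $\lieg'$ is almost never simple — for the classical types $\delta^{\perp}$ contains a component of the same series of smaller rank, possibly together with an $\liealg{a}_1$-factor, and for the remaining types it is one of $\liealg{a}_1$, $\liealg{c}_3$, $\liealg{a}_5$, $\liealg{d}_6$, $\liealg{e}_7$ or $\liealg{a}_1^{\oplus 3}$ — so one has to check that $\sigma_2$ cannot interchange the (non-isomorphic, outside type $\liealg{d}_4$) simple factors of $\lieg'$ and that $S$ restricts to a suitable maximal torus on each of them, so that the inductive hypothesis genuinely applies factor by factor.
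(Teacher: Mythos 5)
Your reduction is sound as far as it goes: by \cref{prop-some-power-of-vanishing-root-is-reflected} it suffices to show that $\sigma_2(\alpha) \in \CC\alpha$ for each $\alpha \in \Omega$, equivalently that $\sigma_2$ has no orbit of length $\geq 2$ on the $\liealg{sl}_2$-summands of $[Z_{\lieg}(\lies), Z_{\lieg}(\lies)]$, and your observation that a longer orbit would have to carry the sign $-1$ on the $H_\alpha$'s is correct. But everything after that is a promissory note rather than a proof. The entire content of the theorem is concentrated in the step you label ``the main obstacle'' --- establishing $\sigma_2(\delta) = -\delta$ for the chosen root $\delta$ --- and there you only write that you ``expect to combine'' \cref{prop-some-power-of-vanishing-root-is-reflected}, the involutivity of $\sigma_1$, and the classification results ``in order to discard the configurations with $\sigma_2(\delta) \neq \pm\delta$.'' No such combination is exhibited. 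This is precisely what occupies most of \cref{sec-reflection}: \cref{lem-order-at-most-four-on-vanishing-roots} analyzes how $\sigma_2$ acts on the simple ideals of $[\liek_1,\liek_1]$ and shows, via \cref{lem-maximal-torus}, rank counts from \cref{cor-estimate-on-rank}, and the classification of involutions, that always $\sigma_2(\alpha) = -\alpha$ or $(\sigma_2)^2(\alpha) = -\alpha$, the latter only when $[\liek_1,\liek_1]$ is a sum of two simple ideals interchanged by $\sigma_2$ on which $(\sigma_2)^2$ is outer; and \cref{lem-roots-reflected-on-a-series,lem-roots-reflected-on-d-series} then exclude that residual configuration in types $\liealg{a}_n$ and $\liealg{d}_n$ (the only types where it can arise) by comparing $|\Omega^{+}|$ and $|\Gamma^{+}|$ against the classification of strongly orthogonal root sets and by delicate Weyl-group arguments using \cref{lemma-weyl-group-element-resembling-root-reflection}. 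None of these arguments, nor substitutes for them, appear in your proposal.

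There is also a structural weakness in the induction you set up. The statement is only true for \emph{simple} $G$ --- it genuinely fails when $\sigma_2$ permutes isomorphic simple factors, which is exactly the mechanism behind case (2) of \cref{lem-order-at-most-four-on-vanishing-roots} --- so you cannot apply ``the induction hypothesis'' to the reductive, generally non-simple algebra $\lieg' = \delta^{\perp}$ without first proving that $\sigma_2$ preserves each simple factor of $\lieg'$ and that the tori $S$, $T_1$, $T$ and the set of vanishing roots restrict compatibly to each factor. You flag this as ``bookkeeping,'' but in type $\liealg{d}_4$, where $\delta^{\perp} \cong \liealg{a}_1^{\oplus 3}$ and $\sigma_2$ may represent an element of order $3$ in $\Out(G)$, ruling out a permutation of the factors is not bookkeeping at all --- it is the hardest part of \cref{lem-roots-reflected-on-d-series} (see \cref{lem-roots-reflected-on-d-series-claim-4}). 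Note also that the paper does not prove \cref{thm-reflection} by induction on $\dim G$ at all; the induction lives in \cref{thm-normal-form-automorphisms}, which assumes property $\star$ as a hypothesis, whereas \cref{thm-reflection} is proved by the global analysis of $\liek_1$ described above. Your plan could in principle be completed, but the missing step is the theorem.
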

	
	If one assumes that $\sigma_2|_{\liet}$ is an involution, then this is a straightforward consequence of \cref{prop-some-power-of-vanishing-root-is-reflected}. The
	general case, however, is more involved and will occupy the remainder of this section. Curiously enough, on simple Lie groups $G$ whose isomorphism type is not
	$\liealg{d}_4$, \cref{thm-reflection} and \cref{thm-normal-form-automorphisms} \emph{a fortiori} show that $\sigma_2|_{\liet}$ is an involution.

	\begin{lemma}\label{lem-order-on-roots}
		Let $\alpha \in \Omega$ and let $j \geq 1$ be the smallest integer such that $(\sigma_2)^j(\alpha) = \alpha$. Then the integer $\ell \geq 1$ figuring in
		\cref{prop-some-power-of-vanishing-root-is-reflected} satisfies $j = 2\ell$. 
	\end{lemma}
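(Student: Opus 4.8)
The plan is to leverage the relation $(\sigma_2)^{\ell}(\alpha) = -\alpha$ supplied by \cref{prop-some-power-of-vanishing-root-is-reflected}, together with the minimality of $j$ and the linear independence property that defines $\ell$. The first step is to apply $(\sigma_2)^{\ell}$ once more to $(\sigma_2)^{\ell}(\alpha) = -\alpha$, obtaining $(\sigma_2)^{2\ell}(\alpha) = \alpha$. Since $j$ is, by definition, the smallest positive integer with $(\sigma_2)^{j}(\alpha) = \alpha$, this forces $j \mid 2\ell$; in particular $j \leq 2\ell$.

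The second step is to bound $j$ from below by $\ell + 1$. Because $j$ is the exact period of $\alpha$ under $\sigma_2$, the roots $\alpha, \sigma_2(\alpha), \ldots, (\sigma_2)^{j-1}(\alpha)$ are pairwise distinct, and every later iterate coincides with one of these. Hence, if we had $j < \ell$, then the list $\alpha, \sigma_2(\alpha), \ldots, (\sigma_2)^{\ell-1}(\alpha)$ would contain $(\sigma_2)^{j}(\alpha) = \alpha$ a second time, contradicting the linear independence of these $\ell$ vectors. And if $j = \ell$, then $(\sigma_2)^{\ell}(\alpha) = \alpha$, which together with $(\sigma_2)^{\ell}(\alpha) = -\alpha$ would give $2\alpha = 0$, impossible for a root. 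Therefore $j \geq \ell + 1$.

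The final step is purely arithmetic: writing $2\ell = j m$ with $m \in \NN$, the inequality $j > \ell$ yields $m < 2$, hence $m = 1$ and $j = 2\ell$. I do not expect any genuine obstacle here; the only point requiring care is the observation that $j$ being the minimal period makes the first $j$ iterates of $\alpha$ distinct, which is exactly what allows the linear independence hypothesis defining $\ell$ to be brought to bear in the lower bound $j \geq \ell + 1$.
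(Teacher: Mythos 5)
Your proof is correct and rests on the same two pillars as the paper's: the identity $(\sigma_2)^{2\ell}(\alpha) = \alpha$ bounding $j$ from above, and the linear independence of $\alpha, \sigma_2(\alpha), \ldots, (\sigma_2)^{\ell-1}(\alpha)$ forcing $j > \ell$. Your use of the divisibility $j \mid 2\ell$ in place of the paper's two division-with-remainder arguments is a minor streamlining, not a different method.
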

	\begin{proof}
		Indeed, we have $(\sigma_2)^{2\ell}(\alpha) = \alpha$, whence $2\ell \geq j$. 
		Next, note that for any non-negative integer $r$ with $(\sigma_2)^r(\alpha) = -\alpha$ we must have $r \geq \ell$, for otherwise 
		$\alpha, \sigma_2(\alpha), \ldots, (\sigma_2)^{\ell - 1}(\alpha)$ would not be linearly independent. In particular, we must have $\ell < j$,
		as otherwise we could express $\ell$ as $\ell = mj + r$ with integers $m, r \geq 0$ and $r < \ell$, which would imply that also $(\sigma_2)^r(\alpha) = -\alpha$. 
		A similar computation shows that $2\ell = j$. In fact, if we write $j = m\ell + r$ for certain integers $m, r \geq 0$ with $r < \ell$, and assume that $2\ell > j$ for a contradiction, then 
		$m = 1$ and we find that
		\begin{align*}
			\alpha &= (\sigma_2)^j(\alpha) = (\sigma_2)^r\bigl((\sigma_2)^{\ell}(\alpha)\bigr) = -(\sigma_2)^r(\alpha).
		\end{align*}
		But this means that $(\sigma_2)^r(\alpha) = -\alpha$, which again is impossible.
	\end{proof}
	
	\begin{lemma}\label{lem-maximal-torus}
		Let $j \geq 0$ and put $\tau := (\sigma_2)^j$. Then $\liet_1$ contains a maximal torus of $\liek_1 \cap \lieg^{\tau}$.
	\end{lemma}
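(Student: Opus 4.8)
The plan is to produce the required subalgebra of $\liet_1$ concretely, as the complexified Lie algebra of a maximal torus $A$ of $(K_1^{\tau})_0$ that one arranges to lie inside $T_1$. To begin with, since $\sigma_1$ and $\sigma_2$ commute, so do $\sigma_1$ and $\tau = (\sigma_2)^j$; hence $\tau$ preserves $G^{\sigma_1}$ and therefore restricts to a finite-order automorphism of $K_1 = (G^{\sigma_1})_0$. Moreover $S \subseteq K \subseteq G^{\sigma_2}$, so $S$ is fixed pointwise by $\sigma_2$ and hence by $\tau$; combined with $S \subseteq K_1$ this shows that $S$ is a torus contained in $(K_1^{\tau})_0$ (indeed $S \subseteq K = (K_1^{\sigma_2})_0 \subseteq (K_1^{\tau})_0$). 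I would then fix a maximal torus $A$ of $(K_1^{\tau})_0$ with $S \subseteq A$; this is the one place where a choice has to be made deliberately.

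The key input is exactly the standard fact already invoked in the preceding sections to see that $T_1 = Z_{K_1}(S)$ is a maximal torus of $K_1$, namely \cite[lemma X.5.3]{Helgason}: if $\theta$ is a finite-order automorphism of a compact connected Lie group $H$ and $B$ is a maximal torus of $(H^{\theta})_0$, then $Z_H(B)$ is a maximal torus of $H$. Applying this with $H = K_1$, $\theta = \tau$ and $B = A$ shows that $Z_{K_1}(A)$ is a maximal torus of $K_1$. Since $S \subseteq A$, we have $Z_{K_1}(A) \subseteq Z_{K_1}(S) = T_1$; as $T_1$ is a torus and $Z_{K_1}(A)$ is a \emph{maximal} torus, this inclusion is forced to be an equality, so $Z_{K_1}(A) = T_1$ and in particular $A \subseteq T_1$. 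Taking Lie algebras and complexifying, the Cartan subalgebra $\operatorname{Lie}(A)^{\CC}$ of $\liek_1 \cap \lieg^{\tau}$ is contained in $\liet_1$, which is the assertion. (The argument in fact identifies this maximal torus: $A$ is $\tau$-fixed and maximal in $(K_1^{\tau})_0$ while $A \subseteq T_1$, so $A = (T_1^{\tau})_0$, and correspondingly $\liet_1 \cap \lieg^{\tau}$ is itself a maximal torus of $\liek_1 \cap \lieg^{\tau}$.)

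I do not anticipate a genuine obstacle. The only subtlety is to choose $A$ through $S$, since otherwise the centralizer $Z_{K_1}(A)$ could not be related to $T_1$; apart from that the proof is just the two elementary observations that $\tau$ preserves $K_1$ and fixes $S$ pointwise, followed by one application of \cite[lemma X.5.3]{Helgason} and the trivial remark that a maximal torus contained in a torus must coincide with it. In particular, no computation with roots is needed here, in contrast to the subsequent lemmas of this section.
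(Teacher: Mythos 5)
Your proof is correct and follows essentially the same route as the paper: the paper also picks a maximal torus $\lieh$ of $\liek_1\cap\lieg^{\tau}$ containing $\lies$, applies the same standard fact (that the centralizer in $\liek_1$ of such a torus is a maximal torus of $\liek_1$, i.e.\ \cite[lemma X.5.3]{Helgason}), and concludes $\lieh\subseteq\liet_1$ from the fact that $\liet_1=Z_{\liek_1}(\lies)$ is the unique maximal torus of $\liek_1$ containing $\lies$. The only cosmetic difference is that you phrase the argument at the level of the groups $A\subseteq T_1$ while the paper works directly with the (real and complexified) Lie algebras.
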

	\begin{proof}
		Because every point which is fixed by $\sigma_2$ also is fixed by $\tau$, we have $\lies \subseteq \liek_1 \cap \lieg^{\tau}$. Therefore, there
		exists a maximal torus $\lieh_{\RR}$ of $\liek_1 \cap (\lieg_{\RR})^{\tau}$ containing $\lies_{\RR}$. On the other hand, $\tau$ restricts to a finite-order automorphism 
		$\tau|_{\liek_1}\colon \liek_1 \rightarrow \liek_1$ of $\liek_1$, and so, as already observed earlier, $Z_{\liek_1}(\lieh)$ must be a maximal torus of $\liek_1$. But 
		$\lies \subseteq Z_{\liek_1}(\lieh)$ and $\liet_1$ is the unique maximal torus of $\liek_1$ containing $\lies$, whence we must have $Z_{\liek_1}(\lieh) = \liet_1$, and 
		in particular $\lieh \subseteq \liet_1$.
	\end{proof}
	
	\begin{lemma}\label{lem-order-at-most-four-on-vanishing-roots}
		Denote by $\liealg{q} = [\liek_1, \liek_1]$ the semisimple part of $\liek_1$ and assume that $\Omega$ is non-empty. Then, on a simple Lie group $G$, exactly one of the following cases occurs.
		\begin{enumerate}
			\item
			$\liealg{q}$ contains a $\sigma_2$-invariant simple ideal. In this case, $\sigma_2(\alpha) = -\alpha$ for all $\alpha \in \Omega$.
			
			\item 
			$\liealg{q} = \lieh_1\oplus\lieh_2$ is a sum of simple ideals $\lieh_1$ and $\lieh_2$ which are mapped isomorphically onto each other by $\sigma_2$. Furthermore, 
			we have $\sigma_2(\alpha) = -\alpha$ or $(\sigma_2)^2(\alpha) = -\alpha$ for all $\alpha \in \Omega$.
		\end{enumerate}
	\end{lemma}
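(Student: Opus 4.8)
The plan is to study how $\sigma_2$ permutes the simple ideals of $\liealg{q}$ and to read off the constraint that the roots of $\Omega$ impose on this permutation. Since $\sigma_2$ commutes with $\sigma_1$ it preserves $\liek_1$, hence also $\liealg{q}=[\liek_1,\liek_1]$, and therefore permutes the (finitely many) simple ideals of $\liealg{q}$. The first ingredient I would isolate is a structural fact coming from the classification of involutions: for a simple $G$ and an involution $\sigma_1$, the algebra $\liealg{q}$ is a sum of \emph{at most two} simple ideals. This is read off from \cref{prop-classification-of-involutions} (using \cref{prop-rank-of-fixed-point-set-of-automorphism} in the outer case); the only place where the count is larger is when $G$ has type $\liealg{d}_4$ and $\liek_1\cong\liealg{so}(4)\oplus\liealg{so}(4)$, which I would treat by hand. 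Consequently $\sigma_2$ either leaves some simple ideal of $\liealg{q}$ invariant --- this happens in particular if $\liealg{q}$ is simple, and also if $\liealg{q}=\lieh_1\oplus\lieh_2$ and $\sigma_2$ fixes both summands, since fixing one of two ideals forces fixing the other --- or else $\liealg{q}=\lieh_1\oplus\lieh_2$ with two isomorphic simple ideals that $\sigma_2$ interchanges. These alternatives are mutually exclusive, and they give the dichotomy (1)/(2).

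Next I would connect the permutation to $\Omega$ through coroots. Fix $\alpha\in\Omega$ and let $\ell=\ell_\alpha\geq 1$ be the integer of \cref{prop-some-power-of-vanishing-root-is-reflected}, so that $\alpha,\sigma_2(\alpha),\dots,(\sigma_2)^{\ell-1}(\alpha)$ are linearly independent, $(\sigma_2)^\ell(\alpha)=-\alpha$, and, by \cref{lem-order-on-roots}, $\sigma_2$ has order $2\ell$ on $\alpha$. Writing $\alpha_i:=(\sigma_2)^i(\alpha)$ for $0\leq i<\ell$, the set $\Omega$ being $\sigma_2$-invariant shows $\alpha_i\in\Omega$; no two of them are equal or opposite (else $\ell$ would not be maximal), so by \cref{prop-strongly-orthogonal} they are pairwise strongly orthogonal. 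Hence the triples $(E_{\alpha_i},H_{\alpha_i},E_{-\alpha_i})$ spanning $\liealg{sl}_{\alpha_i}:=\lieg_{\alpha_i}\oplus\CC H_{\alpha_i}\oplus\lieg_{-\alpha_i}$ commute pairwise, their Cartan lines $\CC H_{\alpha_i}=\liealg{sl}_{\alpha_i}\cap\liek_1\subseteq\liet_1$ are mutually orthogonal, and $\sigma_2$ sends $H_{\alpha_i}$ to $H_{\alpha_{i+1}}$ for $i<\ell-1$ and $H_{\alpha_{\ell-1}}$ to $-H_{\alpha_0}$. The decisive point is that each $H_{\alpha_i}$ has nonzero component in exactly one simple ideal of $\liealg{q}$, and that distinct $i$ produce distinct ideals: here one exploits that $\lieg_{\pm\alpha_i}\subseteq\liep$ (\cref{prop-strongly-orthogonal}) while $[\liep,\liep]=\liek_1$ because $G$ is simple, together with the commuting/strong orthogonality of the $\liealg{sl}_{\alpha_i}$, to see that these triples sit in pairwise different simple factors of $\liealg{q}$; \cref{lem-maximal-torus} is available for the accompanying rank bookkeeping. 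Since $\sigma_2$ respects this assignment of roots to ideals, $\liealg{q}$ has at least $\ell$ simple ideals and $\sigma_2$ acts on those $\ell$ ideals as a single $\ell$-cycle.

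Combining the two ingredients finishes the proof. If $\liealg{q}$ has a $\sigma_2$-invariant simple ideal then, having at most two simple ideals and $\sigma_2$ therefore fixing all of them, no orbit of roots can have length $\ell\geq 2$ (a length-$\ell$ cycle of ideals would require $\ell$ distinct ideals that are genuinely permuted); hence $\sigma_2(\alpha)=-\alpha$ for every $\alpha\in\Omega$, which is case (1). In the remaining case $\liealg{q}=\lieh_1\oplus\lieh_2$ with $\sigma_2$ swapping the two simple ideals, so every orbit of roots has length $\ell\leq 2$, and therefore $\sigma_2(\alpha)=-\alpha$ or $(\sigma_2)^2(\alpha)=-\alpha$ for every $\alpha\in\Omega$, which is case (2).

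I expect the main obstacle to be the claim that the coroots $H_{\alpha_i}$ detect pairwise-distinct simple ideals of $\liealg{q}$ --- equivalently, that $\sigma_2$ cannot carry a root of $\Omega$ to another root of $\Omega$ within a single simple factor of $\liealg{q}$ --- and, entangled with it, the case where $G$ has type $\liealg{d}_4$ and $\liek_1\cong\liealg{so}(4)\oplus\liealg{so}(4)$, where the bound ``at most two simple ideals'' fails and one must argue directly from the explicit form of the symmetric pair and its commutation with $\sigma_1$ (noting that when $\sigma_2|_{\liet}$ is itself an involution the conclusion $\sigma_2(\alpha)=-\alpha$ is in any case immediate from \cref{prop-some-power-of-vanishing-root-is-reflected}).
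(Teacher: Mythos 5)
Your overall architecture (establish the dichotomy on ideals, then bound the length of the $\sigma_2$-orbit of a root $\alpha\in\Omega$ by the length of the corresponding cycle of simple ideals) rests on a step that is both unproven and, in fact, false: the claim that each coroot $H_{\alpha_i}$, $\alpha_i=(\sigma_2)^i(\alpha)$, has nonzero component in exactly one simple ideal of $\liealg{q}$ and that distinct $i$ detect distinct ideals. Neither half holds. For the first half: $H_{\alpha}$ lies in $\liet_1$, but its orthogonal projections to the center and to the various simple ideals of $\liek_1$ can all be nonzero simultaneously --- already for $\lieg$ of type $\liealg{c}_2$ with $\liek_1\cong\liealg{a}_1\oplus\liealg{a}_1$ the coroot of a short root has nonzero component in both ideals --- and the paper's own proof is organized around precisely this possibility, writing $H_{\alpha}=H_0+H+H'$ and running a case analysis on which components vanish. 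The fact that $\lieg_{\pm\alpha}\subseteq\liep$ and $[\liep,\liep]\subseteq\liek_1$ only tells you $H_{\alpha}\in\liet_1$; it says nothing about which ideal(s) of $\liealg{q}$ it meets. For the second half: two strongly orthogonal roots of $\Omega$ have orthogonal coroots, but a simple ideal of rank $\geq 2$ contains plenty of pairwise orthogonal coroots, so orthogonality does not force distinct ideals. (Separately, the structural claim that $\liealg{q}$ has at most two simple ideals except for $\liealg{d}_4$ is also inaccurate --- e.g.\ $\liealg{d}_2\oplus\liealg{b}_{n-2}\cong\liealg{a}_1\oplus\liealg{a}_1\oplus\liealg{b}_{n-2}$ inside $\liealg{b}_n$ --- though that part is repairable.)

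The damage is not cosmetic, because the hardest configuration the lemma must exclude is invisible to your ideal-counting. In case (2) the dangerous scenario is $\lieg\cong\liealg{d}_8$ with $\liealg{q}=\lieh_1\oplus\lieh_2$, $\lieh_i\cong\liealg{d}_4$, $\sigma_2$ swapping the two ideals and $\mu=(\sigma_2)^2$ acting on each $\lieh_i$ by an order-$3$ outer (triality) automorphism: there $\alpha$, $\mu(\alpha)$, $\mu^2(\alpha)$ would be three distinct roots of $\Omega$ whose coroots all live in the same pair of ideals, so the orbit of ideals has length $2$ while the orbit of the root has length $6$. Your argument concludes $\ell\leq 2$ here and so silently assumes this case away. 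The paper rules it out by an honest count: using \cref{prop-rank-of-fixed-point-set-of-automorphism} and \cref{cor-estimate-on-rank} one gets $|\Omega^{+}|\leq\rank\lieg-\dim\lies=6$, while the roots vanishing on the larger torus $\liealg{u}=\liet_1\cap\lieg^{\mu}$ give $|\Gamma^{+}|=4$ and \cref{lem-order-on-roots} forces at least three further linearly independent elements of $\Omega^{+}\setminus\Gamma^{+}$, so $|\Omega^{+}|\geq 7$, a contradiction. A parallel issue arises in case (1): when $H_{\alpha}$ has nonzero components in both invariant ideals (and possibly the center), one must compare the orders of $\sigma_2$ on the separate components, as the paper does via \cref{lem-maximal-torus} and the order-$\leq 3$ bound on $\Out$ of a simple algebra; your proposal has no substitute for either argument, so the proof has a genuine gap at its central step.
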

	\begin{proof}
		Fix $\alpha \in \Omega^{+}$. We know from \cref{prop-strongly-orthogonal} that $\alpha$ is fixed by $\sigma_1$, so $H_{\alpha}$ is contained in the maximal torus
		$\liet_1$ of $\liek_1$. As $K_1$ is a compact Lie group, $\liek_1$ splits as $\liek_1 = Z\oplus \liealg{q}$, where $Z$ is the (possibly trivial) center of $\liek_1$,
		so we may decompose $H_{\alpha}$ accordingly as $H_{\alpha} = H_0 + H_s$, where $H_0 \in Z$ and $H_s \in \liealg{q}$.
		
		\begin{claim}\label{lem-maximal-torus-claim-1}
			If $\lieh \subseteq \liealg{q}$ is a $\mu$-invariant simple ideal, $\mu  = (\sigma_2)^j$, and $\mu|_{\lieh}$ is an inner automorphism, then $\mu$ fixes 
			$\liet_1 \cap \lieh$ pointwise.
		\end{claim}
		\begin{proofofclaim}
			By assumption, $\rank \lieh^{\mu} = \rank \lieh$. Since $\liet_1 \cap \lieh$ contains a maximal torus of $\lieh^{\mu}$  by \cref{lem-maximal-torus}
			we see that $\mu$ must be the identity on $\liet_1 \cap \lieh$.
		\end{proofofclaim}
		
		\begin{claim}\label{lem-maximal-torus-claim-2}
			If $H_0$ is non-zero, then $\sigma_2(H_0) = -H_0$. In particular, $(\sigma_2)^{\ell}(\alpha) = -\alpha$ only if $\ell$ is odd.
		\end{claim}
		\begin{proofofclaim}
			Indeed, by the classification of fixed point sets of involutions (\cref{prop-classification-of-involutions}) the center $\liek_1$ can be at most one-dimensional, and since 
			$\sigma_2\colon\liek_1\rightarrow\liek_1$ is a finite-order automorphism induced by the $\RR$-linear map 
			$\sigma_2\colon\lieg_{\RR}\rightarrow\lieg_{\RR}$, we conclude that $H_0$ must necessarily be an eigenvector of $\sigma_2$ to the eigenvalue $1$ or $-1$. But
			we know from \cref{prop-some-power-of-vanishing-root-is-reflected} that $(\sigma_2)^{\ell}(\alpha) = -\alpha$ for some $\ell \geq 1$, and so $H_0$ must be an
			eigenvector to the eigenvalue $-1$.
		\end{proofofclaim}
		
		\begin{claim}
			If there exists a simple ideal $\lieh \subseteq \liealg{q}$ which is $\sigma_2$-invariant, then $\sigma_2(\alpha) = -\alpha$.
		\end{claim} 
		\begin{proofofclaim}
			First note that either $(\sigma_2)^2|_{\lieh}$ or $(\sigma_2)^3|_{\lieh}$ must fix $\liet_1 \cap \lieh$ pointwise. In fact, if $\mu\colon L \rightarrow L$ is a Lie group 
			automorphism on a compact simple Lie group $L$, then either $\mu^2$ or $\mu^3$ is an inner automorphism, since the outer automorphism group
			$\Out(L) = \Aut(L)/\mathrm{Int}(L)$ is isomorphic to the automorphism group of the Dynkin diagram of $\liealg{l}$, which only admits automorphisms of
			order at most $3$ \cite[theorem X.3.29]{Helgason}.		
			Applying this observation to $\sigma_2|_{\lieh}$, we see that $(\sigma_2)^j|_{\lieh}$ must be an inner automorphism for $j = 2$ or $j = 3$, so $(\sigma_2)^j$ fixes
			$\liet_1 \cap \lieh$ pointwise by \cref{lem-maximal-torus-claim-1}. Moreover, if $\liealg{q}$ is not simple, then by the 
			\cref{prop-classification-of-involutions}, we know that $\liealg{q}$ is 
			the sum of two simple ideals, whence the other simple ideal $\lieh'$ of $ \liealg{q}$ must be $\sigma_2$-invariant as well. The same reasoning then shows that 
			$(\sigma_2)^2$ or $(\sigma_2)^3$ must be the identity on $\liet_1 \cap \lieh'$. Now recall that $H_{\alpha} = H_0 + H_s$ with $H_s \in \liealg{q}$ an $H_0$ an
			element in the center of $\liek_1$. We decompose $H_s$ further as $H_s = H + H'$, where $H \in \lieh$, 
			$H' \in \lieh'$, and denote by $j \geq 1$ the minimal number with $(\sigma_2)^j(\alpha) = \alpha$. We know from \cref{lem-order-on-roots} that $j = 2\ell$ for some 
			$\ell \geq 1$ with $(\sigma_2)^{\ell}(\alpha) = -\alpha$, so to prove our claim it suffices to show that $j \leq 3$. In order to do this, we distinguish two cases.
			
			First, suppose that $H_0$ is non-zero. According to \cref{lem-maximal-torus-claim-2} the integer $\ell$ then must be odd. On the other hand, since $(\sigma_2)^6$
			is the identity on $\liet_1\cap\lieh$ and $\liet_1\cap\lieh'$, it follows that $j \leq 6$, and hence we either have $j = 2$ or $j = 6$. However, if we were to assume that $j = 6$, then
			one of $H$ or $H'$ must be nonzero, because $\sigma_2$ is of order $2$ on $\CC\cdot H_0$. Also $(\sigma_2)^3(\alpha) = -\alpha$. Hence, if
			$H$ is nonzero, then $\sigma_2$ must be of order at most $2$ on $\liet_1 \cap \lieh$, and similarly, if $H'$ is nonzero, then $\sigma_2$ must be of order at most $2$
			on $\liet_1 \cap \lieh'$. But then $j \leq 2$, a contradiction.
			
			Next, assume that $H_0 = 0$. If one of $H$ or $H'$ is zero, then $j \leq 3$ is immediate from the fact that $\sigma_2$ is of order at most $3$ on $\liet_1 \cap \lieh$ and
			$\liet_1 \cap \lieh'$. Otherwise, let $r, s \in\{2, 3\}$ be the minimal integers such that $(\sigma_2)^r(H) = H$ and $(\sigma_2)^s(H') = H'$,
			and note that $j = \mathrm{lcm}(r, s)$. Hence, if $r = s$, then it again follows that $j \leq 3$. Suppose then that $r$ and $s$ are different, for a contradiction. Since
			$r, s \in \{2, 3\}$, this is only possible if $j = 6$. But then we would have $(\sigma_2)^3(\alpha) = -\alpha$, contradicting the assumptions $(\sigma_2)^r(H) = H$ and 
			$(\sigma_2)^s(H') = H'$.
		\end{proofofclaim}
		
		\begin{claim}
			If $\liealg{q}$ contains no $\sigma_2$-invariant simple ideal, then $\sigma_2(\alpha) = -\alpha$ or $(\sigma_2)^2(\alpha) = -\alpha$.
		\end{claim}
		\begin{proofofclaim}[\qedhere]
			Appealing once more to \cref{prop-classification-of-involutions}, we find that $\liealg{q} = \lieh_1\oplus \lieh_2$ must be a sum of two simple ideals 
			such that $\sigma_2(\lieh_1) = \lieh_2$. This means that $\lieh_1$ is an invariant simple ideal of $\mu = (\sigma_2)^2$, which implies that either $\mu|_{\lieh_1}$, 
			$\mu^2|_{\lieh_1}$, or $\mu^3|_{\lieh_1}$ is an inner automorphism. Because $\sigma_2\colon \lieh_1 \rightarrow \lieh_2$ is an isomorphism of Lie algebras, the 
			orders of $\sigma_2$ on $\liet_1 \cap \lieh_1$ and $\liet_1 \cap \lieh_2$ must agree, and so we conclude, using \cref{lem-maximal-torus-claim-1}, that $\mu$ is of order at most $3$ on 
			$\liet_1 \cap \liealg{q}$. It still remains to show that $(\sigma_2)^4(\alpha) = \alpha$, for then $\sigma_2(\alpha) = -\alpha$ or $(\sigma_2)^2(\alpha) = -\alpha$ has to hold by 
			\cref{lem-order-on-roots}. Assume that this was not the case for a contradiction. That is, assume that $\mu^2(\alpha) \neq \alpha$ and in particular $\mu(\alpha) \neq -\alpha$
			holds. This implies that $\mu^2|_{\lieh_1}$ cannot be an inner automorphism. 
			Indeed, $\lieh_1$ is a $\mu$-invariant simple ideal, so if $\mu^2|_{\lieh_1}$ was an inner automorphism, then $\mu^2$ would fix $\liet_1 \cap \lieh_1$ pointwise by
			\cref{lem-maximal-torus-claim-1}. But then $\mu^2$ would fix $\liet_1 \cap \lieh_2$ pointwise as well, and since according to \cref{lem-maximal-torus-claim-2} $\mu$ always fixes $H_0$, $\mu$ would also have to fix $\alpha$, which we have assumed to 
			be not the case. Therefore, $\mu|_{\lieh_1}$
			must be an element of order $3$ in $\Out(\lieh_1)$, which is only possible if the isomorphism type of $\lieh_1$ is $\liealg{d}_4$. Similarly, we see that 
			$\lieh_2$ must be of type $\liealg{d}_4$. Now it follows from \cref{prop-classification-of-involutions} that $\lieg$ is of type $\liealg{d}_8$, that $\sigma_1$ is 
			an inner automorphism of $\lieg$, and that $\liek_1 = \liealg{q}$ is semisimple. In particular, any element in $\Out(\lieg)$ is of order $2$, whence 
			$\mu = (\sigma_2)^2$ must be an inner automorphism of $\lieg$. 
			
			With these observations we can now compute $|\Omega^{+}|$ in two different ways. First, note that the map $(\lieh_1)^{\mu} \rightarrow (\liek_1)^{\sigma_2}$, 
			$X \mapsto X + \sigma_2(X)$, is an isomorphism, and hence $\rank\, (\lieh_1)^{\mu} = \dim\lies$. Because $\mu|_{\lieh_1}$ is an outer automorphism of order $3$
			on a Lie algebra of isomorphism type $\liealg{d}_4$, it follows that $(\lieh_1)^{\mu}$ is a Lie algebra of rank $2$ (\cref{prop-rank-of-fixed-point-set-of-automorphism}), and thus $\dim\lies = 2$. Because 
			$|\Omega^{+}| \leq \rank \lieg - \dim \lies$ by \cref{cor-estimate-on-rank}, this implies that $\Omega^{+}$ can contain at most $6$ elements. On the other 
			hand, by \cref{lem-maximal-torus} there exists a maximal torus $\liealg{u} \subseteq \liet_1$ of $\liek_1 \cap \lieg^{\mu}$, and if we let $\Gamma$ be the $\lieg$-roots 
			vanishing on $\liealg{u}$, then we may apply \cref{cor-estimate-on-rank} to the inner automorphisms $\sigma_1$, $\mu$ to conclude that 
			$|\Gamma^{+}| = \rank \lieg - \dim \liealg{u}$. Since $\dim\liealg{u} = \rank\, (\lieh_1)^{\mu} + \rank\, (\lieh_2)^{\mu}$, we see that $\dim\liealg{u} = 4$ and so 
			$|\Gamma^{+}| = 4$. Also, because $\lies \subseteq \liealg{u}$, we have $\Gamma^{+} \subseteq \Omega^{+}$. However, $\alpha$ cannot be contained in 
			$\Gamma^{+}$. Indeed, if $\alpha$ was to vanish on $\liealg{u}$, then we could apply \cref{lem-order-on-roots} and conclude that the smallest number $j$ with 
			$\mu^j(\alpha) = \alpha$ is even. As $\mu|_{\liet_1}$ is of order $3$, this would yield $j = 2$, a contradiction. Therefore, $\alpha \in \Omega^{+} - \Gamma^{+}$. Applying 
			\cref{lem-order-on-roots} again, we see that $(\sigma_2)^3(\alpha) = -\alpha$ and that $6$ is the smallest integer $j$ with $(\sigma_2)^j(\alpha) = \alpha$. But by 
			\cref{lem-order-on-roots} the roots $\alpha$, $\sigma_2(\alpha)$, and $(\sigma_2)^2(\alpha)$ then must be linearly independent, and since $\Gamma$ is a $\sigma_2$
			invariant subset, this implies that $\Omega^{+} - \Gamma^{+}$ contains at least $3$ elements. Hence $|\Omega^{+}| \geq 7$, giving the desired contradiction.
		\end{proofofclaim}
	\end{proof}
	
	\begin{lemma}\label{lem-roots-reflected-on-a-series}
		If $\lieg$ is of type $\liealg{a}_n$, $n \geq 1$, then $\sigma_2(\alpha) = -\alpha$ for all $\alpha \in \Omega$.
	\end{lemma}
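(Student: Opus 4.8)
The plan is to reduce via \cref{lem-order-at-most-four-on-vanishing-roots} and then exploit the very concrete structure of type $\liealg{a}_n$. If $\Omega$ is empty there is nothing to prove, so assume $\Omega\neq\emptyset$ and apply \cref{lem-order-at-most-four-on-vanishing-roots}. In case~(1) the conclusion is immediate, so the whole content lies in case~(2), where $\liealg{q}=[\liek_1,\liek_1]=\lieh_1\oplus\lieh_2$ is a sum of simple ideals interchanged by $\sigma_2$ and, for a fixed $\alpha\in\Omega$, one knows only that the integer $\ell$ of \cref{prop-some-power-of-vanishing-root-is-reflected} equals $1$ or $2$. So I must rule out $\ell=2$. (The degenerate possibility $\lieg=\liealg{a}_1$, in which $\liealg{q}$ could be zero, is dispatched at once: there $\dim\liet=1$, so $\ell$ --- being the number of linearly independent vectors $\alpha,\sigma_2(\alpha),\ldots,(\sigma_2)^{\ell-1}(\alpha)$ --- must be $1$; hence I may assume $n\ge 2$.)

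First I would use \cref{prop-classification-of-involutions} to pin down case~(2): a non-inner involution of a Lie algebra of type $\liealg{a}_n$ has a \emph{simple} fixed point algebra, so $\sigma_1$ must be inner, forcing $n=2m-1$, $\lieh_1\cong\lieh_2\cong\liealg{a}_{m-1}$, and a one-dimensional center $Z$ of $\liek_1$. Now fix $\alpha\in\Omega$; since $H_\alpha\in\liet_1$ by \cref{prop-strongly-orthogonal}, I may write $H_\alpha=H_0+H_s$ with $H_0\in Z$ and $H_s\in\liealg{q}$. Assume, towards a contradiction, that $\ell=2$, i.e.\ $(\sigma_2)^2(\alpha)=-\alpha$. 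Because $\sigma_1$ and $\sigma_2$ commute, $\sigma_2$ preserves $\liek_1$ and hence its center $Z$, and --- exactly as noted in the proof of \cref{lem-order-at-most-four-on-vanishing-roots} --- being a finite-order automorphism induced by an $\RR$-linear map it acts on the one-dimensional space $Z$ by $\pm\id$, so $(\sigma_2)^2$ acts as $\id$ on $Z$. Comparing $Z$-components in $(\sigma_2)^2(H_\alpha)=H_{(\sigma_2)^2(\alpha)}=H_{-\alpha}=-H_\alpha$ then gives $H_0=-H_0$, hence $H_0=0$ and $H_\alpha\in\liealg{q}$.

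Against this I would play the matrix model. After conjugating the pair $(\sigma_1,\sigma_2)$ and choosing the maximal torus suitably --- which affects neither the statement nor the structure just described --- take $\lieg=\liealg{sl}(2m,\CC)$, $\liet$ the traceless diagonal matrices, and $\sigma_1=\Ad_g$ with $g=\operatorname{diag}(I_m,-I_m)$, so that $\liek_1=\liealg{s}(\liealg{gl}(m,\CC)\oplus\liealg{gl}(m,\CC))$, $\liealg{q}=\liealg{sl}(m,\CC)\oplus\liealg{sl}(m,\CC)$, and $Z=\CC\cdot g$. Writing $\alpha=e_i-e_j$, \cref{prop-strongly-orthogonal} says $\lieg_\alpha=\CC E_{ij}$ lies in the $(-1)$-eigenspace $\liep$ of $\sigma_1$; but $\Ad_g$ acts on $E_{ij}$ by $g_{ii}g_{jj}^{-1}$, so exactly one of $i,j$ exceeds $m$. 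Then $H_\alpha$ is a nonzero multiple of $E_{ii}-E_{jj}$, whose component along $Z$ with respect to the trace form is a nonzero multiple of $(g_{ii}-g_{jj})\,g$, and $g_{ii}-g_{jj}=\pm 2\neq 0$; hence $H_0\neq 0$, contradicting the previous paragraph. Therefore $\ell=1$, i.e.\ $\sigma_2(\alpha)=-\alpha$, for every $\alpha\in\Omega$.

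The one step I expect to require care is the reduction at the start of the second paragraph: one must make sure that landing in case~(2) of \cref{lem-order-at-most-four-on-vanishing-roots} on a group of type $\liealg{a}_n$ genuinely forces $\sigma_1$ to be inner with a one-dimensional center --- this is exactly where \cref{prop-classification-of-involutions} and the hypothesis on the isomorphism type enter --- and that the low-rank case $\liealg{a}_1$ is not silently overlooked. Everything after that is a short computation with the coroots $E_{ii}-E_{jj}$ of $\liealg{sl}(2m,\CC)$.
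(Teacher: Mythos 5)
Your argument is correct, and it reaches the conclusion by a genuinely different route from the paper's. Both proofs open the same way: reduce to case (2) of \cref{lem-order-at-most-four-on-vanishing-roots}, use \cref{prop-classification-of-involutions} to force $\sigma_1$ to be inner with $\liek_1$ having a one-dimensional center $Z$ and semisimple part $\liealg{a}_{m-1}\oplus\liealg{a}_{m-1}$, and note that $(\sigma_2)^2(\alpha)=-\alpha$ would force the $Z$-component $H_0$ of $H_\alpha$ to vanish --- this last step is exactly the second claim inside the paper's proof of \cref{lem-order-at-most-four-on-vanishing-roots}, which you re-derive. From there the paper takes a much longer path: it applies \cref{cor-estimate-on-rank} and \cref{cor-inner-automorphisms-normal-form-for-roots-vanishing-on-maximal-torus} to the pair $(\sigma_1,\mu)$ with $\mu=(\sigma_2)^2$, invokes the classification of strongly orthogonal root sets in type $\liealg{a}$ from the thesis to get $|\Gamma^{+}|=k+1$, and extracts a parity contradiction from $\rank\,(\liek_1)^{\mu}=2\rank\,(\lieh_1)^{\mu}+1$. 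You instead observe that $H_0=0$ simply never happens: in the model $\lieg=\liealg{sl}(2m,\CC)$ with $\sigma_1=\Ad_{\operatorname{diag}(I_m,-I_m)}$, part (1) of \cref{prop-strongly-orthogonal} places each $\lieg_{\alpha}$, $\alpha\in\Omega$, in an off-diagonal block, so $H_\alpha$ is proportional to $E_{ii}-E_{jj}$ with exactly one of $i,j$ exceeding $m$, and its projection to $Z=\CC\cdot\operatorname{diag}(I_m,-I_m)$ is visibly nonzero (the projection is indeed the orthogonal one, since $Z\perp[\liek_1,\liek_1]$ for any invariant form). This is shorter and self-contained, avoiding the external classification of cascades; what it costs is the passage to an explicit matrix normal form, which is legitimate because $\sigma_1=c_x$ with $x\in T$ (as $T=Z_G(T_1)$ is fixed pointwise by $\sigma_1$) and the required conjugations lie in $N_G(T)$, hence transport $\Omega$ compatibly. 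One small caveat: your blanket assertion that a non-inner involution of $\liealg{a}_n$ has simple fixed point algebra fails for $\liealg{a}_3\cong\liealg{d}_3$, where the fixed point set can be $\liealg{so}(4)\cong\liealg{a}_1\oplus\liealg{a}_1$; the paper's proof elides the same point, and the case is harmless here because $\liealg{a}_1$ admits no outer automorphisms, so by the first claim in the proof of \cref{lem-order-at-most-four-on-vanishing-roots} the automorphism $(\sigma_2)^2$ would fix $\liet_1$, hence $\alpha$, pointwise, contradicting $(\sigma_2)^2(\alpha)=-\alpha$. With that remark added, your proof stands.
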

	\begin{proof}
		Assume that there exists a root $\alpha \in \Omega$ with $\sigma_2(\alpha) \neq -\alpha$ for a contradiction. By \cref{lem-order-at-most-four-on-vanishing-roots}
		this is only possible if $\mu(\alpha) = -\alpha$, where $\mu = (\sigma_2)^2$, and if furthermore $\liealg{q} := [\liek_1, \liek_1]$ decomposes into the sum of two
		simple ideals $\lieh_1$, $\lieh_2$ which are isomorphically mapped onto each other by $\sigma_2$. We invoke the classification of involutions on simple Lie algebras
		(\cref{prop-classification-of-involutions}) and deduce
		that $\sigma_1$ is an inner automorphism, that $\liek_1 = Z\oplus\liealg{q}$ has a one-dimensional center $Z$, and that $\lieh_1$ and $\lieh_2$ are of type $\liealg{a}_k$ for 
		some number $k \geq 1$. In particular, $n = 2k + 1$.
		
		Now as $\Out(G)$ is cyclic of order $2$, $\mu$ must be an inner automorphism. Hence, if we denote by $\Gamma$ the $\lieg$-roots vanishing on the maximal 
		torus $\liealg{u} = \liet^{\mu}$ of $(\liek_1)^{\mu}$, then $\dim \liealg{u} = n - |\Gamma^{+}|$ by \cref{cor-estimate-on-rank}. We also know
		from \cref{lem-order-at-most-four-on-vanishing-roots} that $\sigma_1$, $\mu$ satisfy property $\star$, as $\Gamma \subseteq \Omega$. \Cref{thm-normal-form-automorphisms} then shows that
		$\mu|_{\liet}$ is an involution, and so it follows from \cref{cor-inner-automorphisms-normal-form-for-roots-vanishing-on-maximal-torus} that the integer
		\begin{align*}
			\sum_{\alpha\in\Gamma^{+}} \frac{2\langle\alpha,\beta\rangle}{\langle\alpha,\alpha\rangle}
		\end{align*}
		is even for all $\lieg$-roots $\beta$. Then the classification in \cite[example II.4.11]{thesis} applies and shows that $|\Gamma^{+}| = k + 1$. 
		Consequently, $\dim\liealg{u} = k$ and $\rank\,(\liek_1)^{\mu} = k$. If $\lieh_i$ was of type $\liealg{a}_k$ with $k = 2r$ even, then 
		\begin{align*}
			2r &= k = \rank\,(\liek_1)^{\mu} = 2\cdot \rank\,({\lieh_1})^{\mu} + 1,
		\end{align*} 
		as $\mu$ fixes $Z$ pointwise. But we cannot have $k = 2r - 1$ either. Indeed, we know that $\mu|_{\lieh_i}$ cannot be an inner automorphism, for otherwise $\mu$ would fix
		$\liet \cap \lieh_i$ pointwise by \cref{lem-maximal-torus} and therefore all of $\liet$. This means that $(\lieh_i)^{\mu}$ is of rank $r$, because $\lieh_i$ is of type $\liealg{a}_{2r-1}$ (see \cref{prop-rank-of-fixed-point-set-of-automorphism}), and
		\begin{align*}
			2r - 1 &= k = \rank\,(\liek_1)^{\mu} = 2\cdot \rank\,({\lieh_1})^{\mu} + 1 = 2r + 1,
		\end{align*}
		which again yields a contradiction.
	\end{proof}
	
	As is well-known, in an abstract (reduced) root system, the reflection $s_{\alpha}$ along a simple root $\alpha$ permutes all the positive roots except for $\alpha$.
	
	\begin{lemma}\label{lemma-weyl-group-element-resembling-root-reflection}
		Let $(V, \langle\cdot,\cdot\rangle, \Delta)$ be an abstract root system, $\Delta^{+}$ a choice of positive roots, and 
		$\Phi$ the corresponding set of simple roots. Fix $\alpha \in \Phi$. If a Weyl group element satisfies $w(\alpha) = -\gamma$ and 
		$w(\beta) \in \Delta^{+}$ for all $\beta \in \Phi - \{\alpha\}$ and some positive root $\gamma$, then necessarily $\gamma = \alpha$ and $w = s_{\alpha}$.
	\end{lemma}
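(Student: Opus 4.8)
The plan is to reduce the statement to the elementary fact that the identity is the only element of a Weyl group mapping every simple root into $\Delta^{+}$, by first compensating for the single ``bad'' root $\alpha$ through composition with $s_{\gamma}$. First I would record the standard facts to be used: (i) if a Weyl group element $v$ sends $\Phi$ into $\Delta^{+}$, then $v=\id$ — otherwise, a reduced expression for $v$ ending in a simple reflection $s_{\delta}$ would give $v(\delta)\in-\Delta^{+}$; (ii) the reflection $s_{\gamma}$ in a \emph{simple} root $\gamma$ permutes $\Delta^{+}\setminus\{\gamma\}$ and negates $\gamma$; and (iii) $w(\alpha)=-\gamma$ is equivalent to $w^{-1}(\gamma)=-\alpha$.

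The main step is then to put $v:=s_{\gamma}\circ w$ and to verify that $v$ sends $\Phi$ into $\Delta^{+}$. On $\alpha$ one computes $v(\alpha)=s_{\gamma}(w(\alpha))=s_{\gamma}(-\gamma)=\gamma\in\Delta^{+}$. For a simple root $\beta\neq\alpha$ one has $w(\beta)\in\Delta^{+}$ by hypothesis, and moreover $w(\beta)\neq\gamma$, since otherwise $\beta=w^{-1}(\gamma)=-\alpha$ by (iii), which is impossible because $\beta$ is positive; hence $v(\beta)=s_{\gamma}(w(\beta))\in\Delta^{+}$ by (ii). Now (i) yields $v=\id$, i.e. $w=s_{\gamma}$.

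Finally, $w=s_{\gamma}$ together with $w(\alpha)=-\gamma$ gives $s_{\gamma}(\alpha)=-\gamma$, i.e. $\alpha-\tfrac{2\langle\alpha,\gamma\rangle}{\langle\gamma,\gamma\rangle}\gamma=-\gamma$; thus $\alpha$ and $\gamma$ are proportional, and since distinct simple roots are linearly independent, $\alpha=\gamma$, and therefore $w=s_{\alpha}$. I expect the only delicate point to be the legitimacy of invoking (ii): it requires $\gamma$ to be a \emph{simple} root (for a non-simple positive root the reflection in it need not preserve $\Delta^{+}\setminus\{\gamma\}$), so the argument rests on knowing $\gamma\in\Phi$ in the situation at hand. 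An equivalent, perhaps cleaner route that isolates this is to observe that the hypotheses force the inversion set $N(w)=\{\beta\in\Delta^{+}:w(\beta)\in-\Delta^{+}\}$ to reduce to the singleton $\{\alpha\}$; then $\ell(w)=|N(w)|=1$, so $w$ is a simple reflection, and $N(s_{\delta})=\{\delta\}$ forces $w=s_{\alpha}$ at once (whence also $\gamma=-w(\alpha)=\alpha$).
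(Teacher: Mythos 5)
Your main argument has a genuine gap, and you have put your finger on exactly the right spot: the step $v(\beta) = s_{\gamma}(w(\beta)) \in \Delta^{+}$ uses that $s_{\gamma}$ permutes $\Delta^{+}\setminus\{\gamma\}$, which is only guaranteed when $\gamma$ is \emph{simple}, and nothing in the hypotheses forces $\gamma \in \Phi$. Your fallback via inversion sets does not close the gap but merely relocates it: the hypotheses only control $w$ on the \emph{simple} roots, hence only give $N(w)\cap\Phi = \{\alpha\}$; they do not give $N(w) = \{\alpha\}$, and in general the number of simple roots sent to negative roots is strictly smaller than $\ell(w) = |N(w)|$ (think of the longest element, which sends every simple root to a negative root).

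In fact no proof exists, because the lemma as stated is false. In the root system of type $\liealg{a}_2$ with simple roots $\alpha_1,\alpha_2$, the Coxeter element $w = s_{\alpha_1}s_{\alpha_2}$ satisfies $w(\alpha_1) = \alpha_2 \in \Delta^{+}$ and $w(\alpha_2) = -(\alpha_1+\alpha_2)$, so the hypotheses hold with $\alpha = \alpha_2$ and $\gamma = \alpha_1+\alpha_2$, yet $w$ has order $3$ and $\gamma \neq \alpha$. (Here $N(w) = \{\alpha_2,\alpha_1+\alpha_2\}$, confirming that your singleton claim fails.) The paper's own proof founders on the same point: it treats the set $C_{-} = \{v : \langle\alpha,v\rangle<0\}\cap\bigcap_{\beta\in\Phi-\{\alpha\}}\{v:\langle\beta,v\rangle>0\}$ as a single open Weyl chamber, but $C_{-}$ is cut out by the simple-root hyperplanes only and is in general a union of several chambers (in the $\liealg{a}_2$ example it is a $120^{\circ}$ wedge containing both $s_{\alpha_2}(C_{+})$ and $w^{-1}(C_{+})$). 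What \emph{is} true, and what your composition argument does prove, is the special case in which $\gamma$ is known to be simple --- in particular: if $w(\alpha)=-\alpha$ and $w(\beta)\in\Delta^{+}$ for all $\beta\in\Phi-\{\alpha\}$, then $w = s_{\alpha}$. Likewise the statement becomes correct if the hypothesis is strengthened to $w(\beta)\in\Delta^{+}$ for all \emph{positive} roots $\beta\neq\alpha$, for then $N(w)=\{\alpha\}$ and your length argument applies verbatim. Whether one of these repaired versions suffices for the applications in the proof of the $\liealg{d}_n$ case would have to be checked separately.
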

	\begin{proof}
		Define the two open Weyl chambers $C_{+}$ and $C_{-}$ by
		\begin{align*}
			C_{\pm} &:= \bigl\{ v \in V \,|\, \langle \pm\alpha, v\rangle > 0\bigr\} \cap \bigcap_{\beta \in \Phi-\{\alpha\}} \bigl\{ v \in V \,|\, \langle \beta, v\rangle > 0\bigr\}. 
		\end{align*}
		Note that if $v \in C_{+}$ is arbitrary, then $\langle \beta, w^{-1}(v)\rangle > 0$ for all $\beta \in \Phi-\{\alpha\}$, since $w(\beta)$ is a positive root 
		by assumption. On the other hand, $w(\alpha) = -\gamma$ is a negative root, and so we see that $w^{-1}(C_{+}) = C_{-}$. But $s_{\alpha}(\beta)$
		is also a positive root for $\beta \in \Phi - \{\alpha\}$, because $\alpha$ and $\beta$ are simple (see \cite[lemma 10.2.B]{Humphreys-lie-algebras}), and 
		$s_{\alpha}(\alpha) = -\alpha$. Therefore $s_{\alpha}(C_{+}) = C_{-}$ holds as well. Since the Weyl group acts simply transitively on the set of open Weyl 
		chambers (cf.\,\cite[theorem 10.3]{Humphreys-lie-algebras}), we thus must have $w = s_{\alpha}$, and in particular $w(\alpha) = -\alpha$. 
	\end{proof}
	
	\begin{lemma}\label{lem-roots-reflected-on-d-series}
		If $\lieg$ is of type $\liealg{d}_n$, $n \geq 4$, then $\sigma_2(\alpha) = -\alpha$ for all $\alpha \in \Omega$.
	\end{lemma}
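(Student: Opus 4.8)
\textit{The plan} is to run the same argument that proved Lemma \ref{lem-roots-reflected-on-a-series}, the difference being that the concluding parity/rank bookkeeping is more delicate for $\liealg{d}_n$. So suppose, for a contradiction, that there is $\alpha\in\Omega$ with $\sigma_2(\alpha)\neq-\alpha$. By \cref{lem-order-at-most-four-on-vanishing-roots} this forces $\mu(\alpha)=-\alpha$ for $\mu:=(\sigma_2)^2$ and forces $\liealg{q}=[\liek_1,\liek_1]=\lieh_1\oplus\lieh_2$ to split into two simple ideals that are carried isomorphically onto one another by $\sigma_2$. First I would run through \cref{prop-classification-of-involutions} to see which involutions $\sigma_1$ on $\liealg{d}_n$ have a fixed point algebra of this shape: one finds that either $\sigma_1$ is inner with $\liek_1=\liealg{d}_{n/2}\oplus\liealg{d}_{n/2}$ (so $n$ is even, and a glance at the table shows that $\lieg$ cannot be of type $\liealg{d}_4$, since $\liealg{d}_2$ is not simple), or $\sigma_1$ is outer with $[\liek_1,\liek_1]=\liealg{b}_{(n-1)/2}\oplus\liealg{b}_{(n-1)/2}$ (so $n$ is odd). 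In either case $\Out(\lieg)\cong\ZZ_2$, so $\mu=(\sigma_2)^2$ is an inner automorphism.

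Next I would pass from $\sigma_2$ to $\mu$, exactly as in Lemma \ref{lem-roots-reflected-on-a-series}. By \cref{lem-maximal-torus} the fixed space $\liealg{u}:=\liet^{\mu}$ is a maximal torus of $\liek_1\cap\lieg^{\mu}$, and the set $\Gamma$ of $\liet$-roots vanishing on $\liealg{u}$ lies inside $\Omega$; moreover, because every nonzero root vanishing on $\liet^{\mu}$ must be reflected by $\mu$ (using \cref{lem-order-at-most-four-on-vanishing-roots} together with the observation that a root fixed by $\mu$ and vanishing on $\liet^{\mu}$ would have to be zero), the pair $(\sigma_1,\mu)$ satisfies property $\star$. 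Then \cref{thm-normal-form-automorphisms} shows that $\mu|_{\liet}$ is an involution, and since $\sigma_1$ and $\mu$ are both inner, \cref{cor-inner-automorphisms-normal-form-for-roots-vanishing-on-maximal-torus} yields that $\sum_{\gamma\in\Gamma^{+}}\frac{2\langle\gamma,\beta\rangle}{\langle\gamma,\gamma\rangle}$ is even for every root $\beta$. As all roots of $\liealg{d}_n$ have the same length, this says $\langle v,\beta\rangle\in 2\ZZ$ for $v=\sum_{\gamma\in\Gamma^{+}}\gamma$ and all $\beta$, and — invoking strong orthogonality of $\Gamma^{+}$ from \cref{prop-strongly-orthogonal} and \cref{lemma-weyl-group-element-resembling-root-reflection} to normalise $\Gamma^{+}$ by a Weyl group element — I would classify precisely which strongly orthogonal subsets of the $\liealg{d}_n$ root system meet this evenness constraint; this is the $\liealg{d}_n$-analogue of \cite[example II.4.11]{thesis}. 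That classification pins down $|\Gamma^{+}|$.

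From $|\Gamma^{+}|$ and the equality $\dim\liealg{u}=\rank\liek_1-|\Gamma^{+}|$ in \cref{cor-estimate-on-rank} (equality since both automorphisms are inner) one reads off $\rank(\liek_1)^{\mu}$, which by construction equals $2\,\rank(\lieh_1)^{\mu}$. Comparing this with the possible ranks of $(\lieh_1)^{\mu}$ for $\lieh_1$ of type $\liealg{d}_{n/2}$ (respectively $\liealg{b}_{(n-1)/2}$), computed from \cref{prop-rank-of-fixed-point-set-of-automorphism}, is where the contradiction should appear; along the way the order-$3$ outer automorphisms of a simple ideal occur only when that ideal is of type $\liealg{d}_4$, i.e. only for $\lieg$ of type $\liealg{d}_8$, and that case has to be disposed of separately, precisely by the $\Gamma^{+}$-versus-$\Omega^{+}$ counting used in the last claim of the proof of \cref{lem-order-at-most-four-on-vanishing-roots}. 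In the outer case one has the further simplification that $\Out(\liealg{b}_m)$ is trivial, so $\mu|_{\lieh_i}$ is automatically inner, whence $\mu$ fixes $\liet_1\cap\lieh_i$ and therefore all of $\liet_1$ pointwise; then $\lies=\liet_1^{\sigma_2}$ together with the resulting explicit description of $\sigma_2$ on $\liet_1$ is again incompatible with \cref{cor-estimate-on-rank}.

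The hard part, I expect, is not the case distinction but the combinatorial classification in the second paragraph: in contrast to type $\liealg{a}_n$, a crude parity count on $|\Gamma^{+}|$ alone does not close the argument for $\liealg{d}_n$, and one has to exploit \cref{lemma-weyl-group-element-resembling-root-reflection} and the explicit geometry of the $\liealg{d}_n$ root system (in effect, that the relevant strongly orthogonal sets are, up to the Weyl group, sub-cascades built from pairs $e_i\pm e_j$) in order to make the rank bookkeeping precise enough to force the contradiction.
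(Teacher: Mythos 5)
Your opening moves match the paper's: assume $\sigma_2(\alpha)\neq-\alpha$, invoke \cref{lem-order-at-most-four-on-vanishing-roots} to get $\mu(\alpha)=-\alpha$ for $\mu=(\sigma_2)^2$ and the splitting of $[\liek_1,\liek_1]$, dispose of the outer-$\sigma_1$ case via triviality of $\Out(\liealg{b}_m)$, and show $\mu$ is inner. But there are two genuine gaps.

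First, you discard $n=4$ on the grounds that $\liealg{d}_2$ is not simple. That does not make the case go away: for an inner involution on $\liealg{d}_4$ one has $\liek_1\cong\liealg{a}_1^{\oplus 4}$, and $\sigma_2$ can permute these four simple ideals with none invariant, which is exactly the situation in which case (1) of \cref{lem-order-at-most-four-on-vanishing-roots} fails. Moreover $\Out(\liealg{d}_4)\cong S_3$, so your assertion that ``in either case $\Out(\lieg)\cong\ZZ_2$, so $\mu$ is inner'' breaks down precisely there; the paper needs a separate rank argument (its Claim 2) to rule out $\mu$ of order $3$ in $\Out(G)$, and a further delicate triality analysis (its Claim 4) to control the diagram automorphism $\nu$ when $n=4$. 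None of this is covered by your sketch.

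Second, and more seriously, the contradiction does not appear where you expect it. In type $\liealg{a}_n$ the rank bookkeeping closes the argument because $\rank(\liek_1)^{\mu}=2\rank(\lieh_1)^{\mu}+1$ is forced to be odd while the cascade classification makes it even. In type $\liealg{d}_{2r}$ the analogous count is perfectly consistent: $\mu|_{\lieh_i}$ is an outer involution on $\liealg{d}_r$, so $(\lieh_i)^{\mu}\cong\liealg{b}_{r-1}$, $\rank(\liek_1)^{\mu}=2r-2=n-2$, hence $|\Gamma^{+}|=2$, and the classification of admissible strongly orthogonal sets does admit a two-element set, namely $\{\alpha_1,\delta\}$ with $\delta$ the highest root. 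So comparing ranks yields no contradiction. The actual contradiction in the paper comes afterwards: since $\Gamma$ is a single $\sigma_2$-orbit of length $4$ (\cref{lem-order-on-roots}), $\sigma_2$ must send $\alpha_1\mapsto\pm\delta$ and $\delta\mapsto\mp\alpha_1$; one then shows $\nu$ fixes both $\alpha_1$ and $\delta$, extracts from $\sigma_2=c_g\circ\nu$ a Weyl group element that permutes $\{\alpha_3,\ldots,\alpha_n\}$ and swaps $\alpha_1$ with $\pm\delta$, and rules this out using \cref{lemma-weyl-group-element-resembling-root-reflection} together with the coefficient of $\alpha_2$ in $\delta=\alpha_1+2\alpha_2+\gamma$. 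You gesture at ``the explicit geometry of the $\liealg{d}_n$ root system'' in your closing paragraph, but that Weyl-group incompatibility argument is the entire substance of the proof and is absent from your proposal.
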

	\begin{proof}
		As in the proof of \cref{lem-roots-reflected-on-a-series}, we argue by contraposition. Thus, assume that there exists a root $\alpha \in \Omega$ with 
		$\sigma_2(\alpha) \neq -\alpha$ for a contradiction. By \cref{lem-order-at-most-four-on-vanishing-roots}
		this is only possible if $\mu(\alpha) = -\alpha$, where $\mu = (\sigma_2)^2$, and if $[\liek_1, \liek_1] = \lieh_1\oplus\lieh_2$ is the sum of two
		simple ideals $\lieh_1$, $\lieh_2$ which are isomorphically mapped onto each other by $\sigma_2$.  
		Let $\Gamma$ be the roots vanishing on the maximal torus $\liealg{u} = \liet^{\mu}$ of $\lieg^{\sigma_1} \cap \lieg^{\mu}$. 		
		
		\begin{claim}\label{lem-roots-reflected-on-d-series-claim-1}
			The automorphism $\sigma_1$ is inner, $n = 2r$, $\liek_1 = \lieh_1\oplus\lieh_2$, and $\lieh_i \cong \liealg{d}_r$.
		\end{claim}
		\begin{proofofclaim}
			From \cref{prop-classification-of-involutions} we see that either $\lieh_i \cong \liealg{b}_r$, $n = 2r + 1$, and $\sigma_1$ is an outer automorphism; or else
			$\liek_1$ is semisimple, $\lieh_i \cong \liealg{d}_r$, and $\sigma_1$ is an inner automorphism. In the first case $\mu|_{\lieh_i}$ must be an inner automorphism, because
			the outer automorphism group of Lie algebras of type $\liealg{b}_r$ is trivial (see \cite[theorem IX.5.4]{Helgason} and \cite[theorem X.3.29]{Helgason}). By \cref{lem-maximal-torus} this would 
			mean that $\mu$ is the identity on $\liet_1 \cap \lieh_i$, hence on $\liet_1$. Thus, as $\dim \liet = \dim \liet_1 + 1$, there could be at most
			one root in $\Gamma^{+}$. But this is impossible, because $\Gamma^{+}$ is $\sigma_1$-invariant and both $\alpha$ and $\sigma_1(\alpha)$ are linearly independent by 
			\cref{lem-order-on-roots}. 		
		\end{proofofclaim}
		
		\begin{claim}\label{lem-roots-reflected-on-d-series-claim-2}
			The automorphism $\mu$ is inner.
		\end{claim}
		\begin{proofofclaim}
			Suppose that $\mu$ is not inner for a contradiction. Since $\Out(G)$ is isomorphic to the group of permutations on $3$ elements if and only if $n = 4$ and
			to $\ZZ/2\ZZ$ else, and because $\mu = (\sigma_2)^2$, we see that $n = 4$ and that both $\sigma_2$ and $\mu$ represent elements of order $3$ in $\Out(G)$. In particular, 
			$\lieg^{\mu}$ is a Lie algebra of rank $2$ by \cref{prop-rank-of-fixed-point-set-of-automorphism}. On the other hand, we know that $\liek_1 = \lieh_1\oplus\lieh_2$ is a sum of
			two simple ideals which are permuted by $\sigma_2$ and that the isomorphism type of $\lieh_i$ equals $\liealg{a}_1\oplus\liealg{a}_1$. Therefore, $(\lieh_i)^{\mu}$ is either of 
			rank $1$ or $2$, and $(\liek_1)^{\mu}$ accordingly is of rank $2$ or $4$. Now as $(\liek_1)^{\mu} \subseteq \lieg^{\mu}$ and the latter is a Lie algebra of rank $2$,
			it follows that $(\liek_1)^{\mu}$ must be of rank $2$ as well. In particular, $\lies$ is also a maximal torus of $\lieg^{\mu}$. But then $\lies$ must
			contain regular elements for the $\lieg$-roots \cite[lemma X.5.3]{Helgason}, which means that no $\lieg$-root can vanish on $\lies$.
			This, however, contradicts our assumption $\Gamma \neq \emptyset$.				
		\end{proofofclaim}
		
		\begin{figure}
			\includegraphics{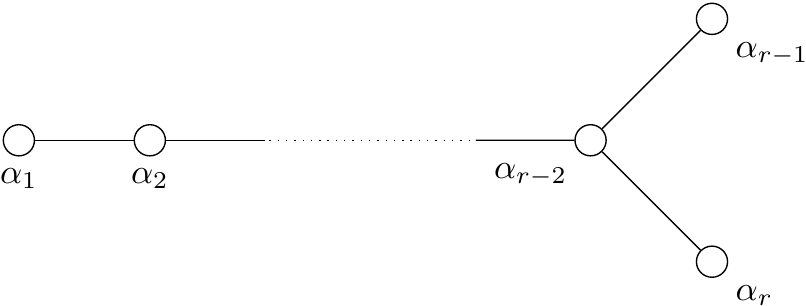}
			\caption{Dynkin diagram of the root system $\liealg{d}_r$.}
			\label{fig-dynkin-diagram-type-d}
		\end{figure}
			
		\begin{claim}\label{lem-roots-reflected-on-d-series-claim-3}
			Enumerating the simple roots as in \cref{fig-dynkin-diagram-type-d}, there exists a Weyl group element $w \in W(G, T)$ such that $w(\Gamma) = \{\pm\alpha_1, \pm\delta\}$.
			In particular, if $n > 4$, then $\alpha_1$ is fixed by the non-trivial Dynkin diagram automorphism.
		\end{claim}		
		\begin{proofofclaim}
			Because $\Gamma \subseteq \Omega$, we know from \cref{lem-order-at-most-four-on-vanishing-roots} that  $\sigma_1$ and $\mu$ satisfy property $\star$. 
			Hence, by \cref{thm-normal-form-automorphisms} and possibly after conjugating $\sigma_1$ and $\mu$ with a Weyl group element, we have 
			$\mu = \mu_1\circ \mu_2$, where $\mu_1$ is an inner automorphism of $G$, representing the Weyl group element $\prod_{\beta\in\Gamma^{+}} s_{\beta}$ on $\liet$, 
			and where $\mu_2$ is an automorphism permuting the positive $\lieg$-roots. Observe that $\mu$ represents a non-trivial element in $\Out(G)$ if and only
			if $\mu_2$ does so. Because $\mu$ is inner, it hence follows that $\mu_2$ is an inner automorphism permuting the positive $\lieg$-roots. Therefore, $\mu_2$ fixes $\liet$
			pointwise and $\mu|_{\liet} = \prod_{\beta\in\Gamma^{+}} s_{\beta}$. In particular, $\mu|_{\liet}$ is an involution.
			
			This means that $\mu^2$ fixes $\liet \cap \lieh_i$ pointwise. As $\liet \cap \lieh_i$ is 
			a maximal torus of 
			$\lieh_i$, $(\mu|_{\lieh_i})^2$ must be an inner automorphism, and therefore $\mu|_{\lieh_i}$ represents an element of order $2$ in $\Out(\lieh_i)$.
			Using that $\lieh_i$ is a Lie algebra of type $\liealg{a}_1\oplus\liealg{a}_1$ (if $r = 2$), $\liealg{a}_3$ ($r = 3$) or $\liealg{d}_r$ $(r \geq 4$),
			and that $\mu|_{\lieh_i}$ is not an inner automorphism (as otherwise $\mu$ would already fix $\liet$ pointwise), we hence conclude that $(\lieh_i)^{\mu}$ is a Lie algebra of rank $r - 1$. 
			Thus, $(\liek_1)^{\mu}$ is a Lie algebra of rank $2r - 2 = n - 2$ and $\dim \liealg{u} = n - 2$. Now it follows from \cref{cor-estimate-on-rank} that $\Gamma^{+}$ contains 
			exactly two elements.
			
			Finally, we observe that \cref{cor-inner-automorphisms-normal-form-for-roots-vanishing-on-maximal-torus} allows us to apply the classification of strongly orthogonal
			sets of roots in Lie algebras of type $\liealg{d}_n$ given in \cite[example II.4.12]{thesis}. Looking at this classification, we immediately find that, up to application of a Weyl 		
			group element, $\Gamma^{+}$ has the form claimed. Indeed, according to this classification $\Gamma^{+}$ either contains $n$ elements, which is impossible, or must be 
			a set of the form
			\begin{align*}
				\{\alpha_1, \delta_1, \alpha_3, \delta_3, \ldots, \alpha_{2t-1}, \delta_{2t-1}\}\text{, }\{\delta_1, \delta_3, \ldots, \delta_{n-3}, \gamma\}.
			\end{align*}
			Here, $\alpha_1, \ldots, \alpha_n$ are the simple roots, enumerated as in \cref{fig-dynkin-diagram-type-d}, $\delta_i$ is the highest root of the root subsystem of $(\Delta, \I\liet_{\RR})$
			determined by $\Delta \cap \operatorname{span}_{\ZZ}\{\alpha_i, \ldots, \alpha_n\}$, and $\gamma \in \{\alpha_{n-1}, \alpha_n\}$. Also, the integer
			$t$ satisfies $2t - 1 < n - 2$. Hence, either $\Gamma^{+}$ is of the first form and $t = 1$ or $n = 4$ and $\Gamma^{+}$ is of the second form. In both cases,
			$\Gamma^{+}$ is as claimed, because $\delta_1$ is the highest root. 
		\end{proofofclaim}		
		
		By the previous claim, we may assume that $\Gamma^{+} = \{\alpha_1, \delta\}$, where $\alpha_1, \ldots, \alpha_n$ are enumerated as in \cref{fig-dynkin-diagram-type-d} and
		$\delta$ is the highest root. Write 
		$\sigma_2 = c_g\circ\nu$ for some element $g \in N_G(T)$ and some automorphism of $G$ with $\nu(\Delta^{+}) = \Delta^{+}$. 
		Consider also the set $\Delta' \subseteq \Delta$ consisting of all roots which are perpendicular to $\alpha_1$ and $\delta$,
		and let $\liet' = \sum_{\beta\in\Delta'} \CC H_{\beta}$ and define $\lieg'$ to be the Lie subalgebra
		\begin{align*}
			\lieg' &= \liet'\oplus\bigoplus_{\beta\in\Delta'}\lieg_{\beta}
		\end{align*}
		of $\lieg$. This Lie algebra is semisimple, so the connected subgroup $G'$ of $G$ with Lie algebra $\lieg'$ is compact, and we denote by 
		$\sigma_i'\colon G'\rightarrow G'$ the restriction of $\sigma_i$ to $G'$.
		
		\begin{claim}\label{lem-roots-reflected-on-d-series-claim-4}
			$\nu$ fixes $\Gamma^{+}$ pointwise.
		\end{claim}
		\begin{proofofclaim}
			For $\delta$ this is immediate, because any 
			automorphism permuting the positive simple roots must fix the highest root. Also, if $n > 4$, then part of the statement of \cref{lem-roots-reflected-on-d-series-claim-3} is 
			that $\alpha_1$ is fixed by the non-trivial Dynkin diagram automorphism on the simple roots, whence $\nu$ fixes $\alpha_1$ in this case as well. Therefore, we may suppose 
			that $n = 4$. Arguing by contraposition, we shall also suppose that $\nu(\alpha_1) \neq \alpha_1$. 
			By \cref{thm-normal-form-automorphisms} there exists $w' \in W(G', T')$ such that $w'(\sigma_2')(w')^{-1} = c_{g'}\circ\nu'$ is a composition of an automorphism $\nu'$ permuting the 
			positive roots $(\Delta')^{+}|_{\liet'}$, and an inner automorphism $c_{g'}$ which on $\liet'$ is a product of root reflections that commute with $\nu'$. Because
			$w'$ extends to $G$ and fixes $\Gamma^{+}$ pointwise, there will be no loss of generality if we assume $w' = \id$ (notice that conjugating $\sigma_2$ by $w'$ changes $g$, but 
			not $\nu$). Moreover, since $\lieg$ is of type $\liealg{d}_4$, we have enumerated the simple roots $\alpha_1$, $\alpha_2$, $\alpha_3$, $\alpha_4$ of $\lieg$ in such a way 
			that $\alpha_2$ is the unique simple root not perpendicular to $\delta$. Then $(\Delta')^{+} = \{\alpha_3, \alpha_4\}$ and $\lieg'$ is a Lie 
			algebra of type $\liealg{a}_1\oplus\liealg{a}_1$, the two ideals being $\CC H_{\alpha_j}\oplus\lieg_{\alpha_j}\oplus\lieg_{-\alpha_j}$ for $j = 3, 4$. Hence,
			as $\dim\lies = 1$, we see that either $\sigma_2' = \Ad_{g'}$ is the product of exactly one root reflection or that else $\sigma_2' = \nu'$ and $\nu'$ interchanges
			the ideals of $\lieg'$. We discuss both cases separately and show that they both lead to a contradiction.
			
			Suppose first that $\sigma_2'$ interchanges the ideals of $\lieg'$. Because $\nu'$ permutes the simple roots $\{\alpha_3|_{\liet'}, \alpha_4|_{\liet'}\}$ of $\lieg'$, this means that 
			$\sigma_2$ interchanges $\alpha_3$ and $\alpha_4$. Since $\nu$ does not fix $\alpha_1$, we have $\nu(\alpha_1) = \alpha_i$ for $i \in \{3, 4\}$, and there is no loss of 
			generality if we assume that $i = 3$. Since $\nu$ and $\sigma_2$ represent the same element in $\Out(G)$ and $\mu = (\sigma_2)^2$ is an inner automorphism by 
			\cref{lem-roots-reflected-on-d-series-claim-2}, we have $\nu^2|_{\liet} = \id$,
			$\nu$ permutes two simple roots and fixes the other ones. Hence, $\nu(\alpha_3) = \alpha_1$, and 
			\begin{align*}
				\alpha_4 &= \sigma_2(\alpha_3) = \Ad_g(\alpha_1).
			\end{align*}
			Similarly, because $\nu(\alpha_4) = \alpha_4$, we have that $\Ad_g(\alpha_4) = \alpha_3$. Now note that $\Gamma$ consists of a single orbit of $\sigma_2$ by 
			\cref{lem-order-on-roots}. Therefore, either $\sigma_2(\alpha_1) = \delta$ and $\sigma_2(\delta) = -\alpha_1$ or $\sigma_2(\alpha_1) = -\delta$ and $\sigma_2(\delta) = \alpha_1$.
			The first case gives $\Ad_g(\alpha_3) = \delta$, so $\Ad_g$ maps $\alpha_1$, $\alpha_4$, and $\alpha_3$ onto positive roots. By \cref{lemma-weyl-group-element-resembling-root-reflection} $\Ad_g(\alpha_2)$ cannot be a negative root, so $\Ad_g(\alpha_2)$ must be positive as well. But then $g \in T$ and in particular $\Ad_g|_{\liet} = \id$,
			a contradiction. In the second case the element $\Ad_{g^{-1}}$ sends $\alpha_1$, $\alpha_4$, $\alpha_3$ onto positive roots, and we again obtain a contradiction.
			Therefore, $\sigma_2$ cannot interchange $\alpha_3$ and $\alpha_4$.
			
			 Now assume that $\sigma_2'|_{\liet'}$ is a root reflection. Since $\Delta' = \{\alpha_3, \alpha_4\}$, we may assume, renaming $\alpha_3$ and $\alpha_4$ if necessary, that
			 $\sigma_2(\alpha_3) = -\alpha_3$ and $\sigma_2(\alpha_4) = \alpha_4$. We have already seen that $\nu$ must be an automorphism of order $2$ on $\liet$, hence
			 either $\nu(\alpha_1) = \alpha_3$ or $\nu(\alpha_1) = \alpha_4$. In the first case, $\nu(\alpha_4) = \alpha_4$, and thus $\Ad_g(\alpha_4) = \alpha_4$. By
			 Chevalley's Lemma, \cite[proposition 2.72]{Knapp}, $\Ad_g$ then must be a product of roots $\beta$ with $\beta \in \{\alpha_1, \alpha_3, \delta\}$. Observe that all of these roots are 
			 mutually perpendicular and that on the other hand, since $\nu$ fixes $\delta$, we also have $\Ad_g(\delta) = \pm\alpha_1$. But no product of such roots $\beta$
			 maps $\delta$ onto $\pm\alpha_1$, and therefore this case cannot apply. If we assume that $\nu(\alpha_1) = \alpha_4$, then $\Ad_g(\alpha_3) = -\alpha_3$.
			 Again it follows from Chevalley's Lemma that $\Ad_g$ is a product of $s_{\alpha_3}$ and a product of roots $\beta$ with $\beta \in \{\alpha_1, \alpha_4, \delta\}$,
			 and again it follows from this description that $\delta$ cannot be mapped onto $\pm\alpha_1$ by $\Ad_g$.		 
		\end{proofofclaim}
		
		The contention of the lemma now follows thus. Since $\nu$ leaves $\liet'$ invariant by \cref{lem-roots-reflected-on-d-series-claim-4}, also $\Ad_g$ must leave $\liet'$ invariant. Hence, if we write 
		$\sigma_2' = c_{g'}\circ\nu'$, where again $\nu'$ is an automorphism permuting the positive roots $(\Delta')^{+}|_{\liet'}$ on $\lieg'$ and $g' \in N_{G'}(T')$, then
		\begin{align*}
			(\Ad_g)|_{\liet'}\circ \nu|_{\liet'} &= \sigma_2' = c_{g'}\circ \nu'. 
		\end{align*} 
		By construction, $\nu|_{\liet'}$ and $\nu'$ preserve the chosen notion of positivity on the $\lieg'$-roots, and therefore $\Ad_{g^{-1}g'}|_{\liet'}$ permutes the simple 
		roots $\alpha_3|_{\liet'}$, \ldots, $\alpha_n|_{\liet'}$ of $\lieg'$. That is, $w = \Ad_{g^{-1}g'}|_{\liet}$ is a Weyl group element permuting $\{\alpha_3, \ldots, \alpha_n\}$.
		On the other hand, since $\Ad_{g'}$ fixes $\Gamma^{+}$ pointwise, we see that $(\sigma_2)^{-1}$ coincides with $w$ on $\Gamma^{+}$, hence either
		$w(\alpha_1) = \delta$ and $w(\delta) = -\alpha_1$; or $w(\alpha_1) = -\delta$ and $w(\delta) = \alpha_1$. In the first case
		$w$ sends all simple roots different from $\alpha_2$ to a positive root, so if also $w(\alpha_2)$ is a positive root, then $w = \id$, and if $w(\alpha_2)$ is a negative
		root, then $w = s_{\alpha_2}$ by \cref{lemma-weyl-group-element-resembling-root-reflection}. Therefore, the first case cannot apply. Assume then that the
		second case holds. Recall that $\lieg$ is of type $\liealg{d}_n$, so the highest root $\delta$ is $\delta = \alpha_1 + 2\alpha_2 + \gamma$ with 
		$\gamma \in \operatorname{span}_{\ZZ}\{\alpha_3, \ldots, \alpha_n\}$, see e.\,g.\,\cite[table 2, Section\,III.12.2]{Humphreys-lie-algebras}. We have
		\begin{align*}
			\alpha_1 &= w(\delta) = -\delta + 2w(\alpha_2) + w(\gamma),
		\end{align*}
		and we have seen that $w(\gamma)$ still is contained in the span of $\alpha_3, \ldots, \alpha_n$. It follows that the coefficient $m_1$ in 
		$w(\alpha_2) = \sum_j m_j\alpha_j$ must be equal to $1$, and hence $w(\alpha_2)$ is a positive root. But this is impossible as well,
		because now by \cref{lemma-weyl-group-element-resembling-root-reflection} we must have $w = s_{\alpha_1}$.
	\end{proof}
	
	\begin{proof}[Proof of \cref{thm-reflection}]
		Let $\liealg{q} = [\liek_1, \liek_1]$ be the semisimple part of $\liek_1$. According to \cref{lem-order-at-most-four-on-vanishing-roots}, we may assume that
		$\liealg{q} = \lieh_1\oplus\lieh_2$ is the sum of two simple ideals which are interchanged by $\sigma_2$. We set $\mu = (\sigma_2)^2$.
		Suppose that $\mu|_{\lieh_i}$ is an inner automorphism. Then $\mu$ fixes $\liet_1 \cap \lieh_i$ pointwise by \cref{lem-maximal-torus}, so all of $\liet_1$ is fixed pointwise
		by $\mu$, because the center of $\liek_1$ is at most one-dimensional. For each root $\alpha \in \Omega$ the vector $H_{\alpha}$ is fixed by $\sigma_1$ by \cref{prop-strongly-orthogonal}, so that $H_{\alpha} \in \liet_1$, 
		and hence we have $\mu(\alpha) = \alpha$ and $\sigma_2(\alpha) = -\alpha$ for all $\alpha \in \Omega$ by \cref{prop-some-power-of-vanishing-root-is-reflected}. 
		Thus, we may assume that $\mu|_{\lieh_i}$ is not an inner automorphism. This is only possible if $\lieh_i$ is of type $\liealg{a}_n$, $\liealg{d}_n$ or $\liealg{e}_6$.
		According to the classification of fixed point sets of involutions (\cref{prop-classification-of-involutions}) the Lie algebra $\lieg$ then must be of type $\liealg{a}_n$ or $\liealg{d}_n$, whence the statement follows
		from \cref{lem-roots-reflected-on-a-series} and \cref{lem-roots-reflected-on-d-series}.
	\end{proof}
	
	\section{Isotropy formality in case that $\sigma_1$ and $\sigma_2$ are of the same type}\label{sec-case-same-type}
	
	Let $G$ be a compact, connected Lie group and $\sigma_1$, $\sigma_2$ automorphisms defining an $\ZZ_2\times\ZZ_k$-action on $G$. We adopt the
	notation of the previous sections. Thus, $\sigma_1$ is an involution, $K_1$ is the connected component of its fixed point set, $K = (G^{\sigma_1} \cap G^{\sigma_2})_0$,
	and $S \subseteq K$ is a maximal torus. Our preferred choices of maximal tori in $K_1$ and $G$ are $T_1 = Z_{K_1}(S)$ and $T = Z_G(T_1)$, respectively. Provided
	that $G$ is simple we have established, up to conjugation, a normal form for $\sigma_2$ in \cref{thm-reflection,thm-normal-form-automorphisms}, and using this normal form we will now 
	show that the pair $(G, K)$ is isotropy formal. This will prove \cref{thm-main} in case that $G$ is simple, and then also for arbitrary Lie groups by \cref{thm-reduction-principle}.
	We divide the proof of \cref{thm-main} for simple $G$ into three cases: that $\sigma_1$ and $\sigma_2$ are both inner or both outer, that $\sigma_1$ is outer and
	$\sigma_2$ is inner, and finally the case in which $\sigma_1$ is inner but $\sigma_2$ is outer. While the proof of the last case will again make considerable use of the
	classification of fixed point sets of involutions (\cref{prop-classification-of-involutions}), the first case does less so and is motivated by the following observation.
	
	According to \cref{thm-normal-form-automorphisms}, $\sigma_2$ is a composition of certain root reflections $s_{\alpha}$ and an automorphism $\nu$ of $G$ that is induced by an automorphism of
	the Dynkin diagram. Furthermore, each of the factors in this composition commute with each other. Therefore, if $\nu|_{\liet}$ is of order at most $2$, which for example is always
	the case if $\lieg$ is simple and its isomorphism type is different from $\liealg{d}_4$, then $\sigma_2|_{\liet}$ too has order at most $2$. A natural question thus is whether 
	isotropy formality of $(G, K)$ can be deduced from equivariant formality of isotropy actions of $\ZZ_2\times\ZZ_2$-symmetric spaces. Note, that even though
	$s_{\alpha}\colon\liet\rightarrow\liet$ is an involution for every root $\alpha$, it may not be possible to represent $s_{\alpha} \in W(G, T)$ by some element in $g \in N_G(T)$ 
	which is an involution on all of $G$. This is related to the fact that the exact sequence
		$0 \rightarrow T \rightarrow N_G(T) \rightarrow W(G, T) \rightarrow 0$
	does not necessarily split \cite{AdamsHe-weyl-group}. Therefore, there is no reason to believe that $\sigma_2$ generally might be represented by an involution. If, however,
	$\sigma_1$ and $\sigma_2$ represent the same automorphism in $\Out(G)$, then isotropy formality of $(G, K)$ can be deduced from the known results about
	$\ZZ_2\times\ZZ_2$-symmetric spaces.
	
	\begin{theorem}\label{cor-eq-formality-in-good-cases}
		Suppose that $\sigma_1\circ(\sigma_2)^{-1}$ is inner and that $\sigma_1$, $\sigma_2$ satisfy property $\star$. Then $(G, K)$ is isotropy formal.
	\end{theorem}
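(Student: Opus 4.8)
The plan is to replace $\sigma_2$ by an honest involution whose restriction to $\liet_1$ agrees with that of $\sigma_2$, and then to invoke the known equivariant formality of isotropy actions of $\ZZ_2\times\ZZ_2$-symmetric spaces. By \cref{prop-formality-depends-on-torus} it suffices to prove that $(G,S)$ is isotropy formal, and this is unaffected if we conjugate $\sigma_1,\sigma_2$ by an element of $N_G(T)$ (replacing $S$ by a conjugate) or if we replace $S$ by any closed connected subgroup of $G$ admitting $S$ as a maximal torus. We may also assume $\Omega\neq\emptyset$, since otherwise $Z_G(S) = T$ and \cref{prop-formality-if-centralizer-is-torus} already applies.

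First I would bring $\sigma_2$ into the normal form of \cref{thm-normal-form-automorphisms}: after conjugating, $\sigma_2 = c_g\circ\nu$, where $\nu$ is induced by an automorphism of the Dynkin diagram fixing a chosen set of positive roots $\Delta^{+}$ and fixing every root of $\Omega$, and $\Ad_g|_{\liet} = w_\Omega := \prod_{\alpha\in\Omega^{+}}s_{\alpha}$. Because $\sigma_1\circ\sigma_2^{-1}$ is inner, $\sigma_1 = c_p\circ\nu$ with the \emph{same} $\nu$; and since $\sigma_1$ is an involution while a non-trivial diagram automorphism is never inner, one gets $\nu^2 = \id$. Consequently $\sigma_2|_{\liet} = w_\Omega\circ\nu|_{\liet}$ is an involution: the reflections $s_\alpha$ ($\alpha\in\Omega^{+}$) commute pairwise by strong orthogonality (\cref{prop-strongly-orthogonal}), and each of them commutes with $\nu$ since $\nu$ fixes $\Omega$ pointwise.

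The crux is to show that $w_\Omega$ admits a representative $n\in N_G(T)$ for which $\tau := c_n\circ\nu$ is an involutive automorphism of $G$ commuting with $\sigma_1$. For each $\alpha\in\Omega^{+}$ I would pick a suitably normalized $\mathfrak{sl}_2$-triple in $\lieg$ and let $n_\alpha\in N_G(T)$ be the associated representative of $s_\alpha$; strong orthogonality forces the $n_\alpha$ to commute, so that $n := \prod_{\alpha\in\Omega^{+}} n_\alpha$ represents $w_\Omega$ and $\tau|_{\liet} = w_\Omega\circ\nu|_{\liet} = \sigma_2|_{\liet}$. Now $\tau^2 = c_{n\,\nu(n)}$, so $\tau$ is an involution precisely when $n\,\nu(n)$ is central in $G$. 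When $\sigma_1$ and $\sigma_2$ are both inner this is exactly the parity statement of \cref{cor-inner-automorphisms-normal-form-for-roots-vanishing-on-maximal-torus} (applied with $\nu = \id$): the integer $\sum_{\alpha\in\Omega^{+}} 2\langle\alpha,\beta\rangle/\langle\alpha,\alpha\rangle$ is even for every root $\beta$, which says that $n^2 = \prod_{\alpha} n_\alpha^2$ is killed by all roots, hence central. When $\sigma_1$ and $\sigma_2$ are both outer one argues analogously, now also using that $\nu$ has order two and fixes $\Omega$, so that $\nu(n_\alpha) = n_\alpha^{\pm1}$ and $n\,\nu(n)$ again lands in the centre, the $\nu$-fixed contribution being controlled by the same parity mechanism. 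That $\tau$ commutes with $\sigma_1$ I would extract from the three facts that $\sigma_1$ fixes $T_1$ — hence $T$, being of the form $c_p$ with $p\in Z_G(T_1) = T$ in the inner case — pointwise, that $\sigma_1$ and $\sigma_2$ commute, and that $n$ and $g$ represent the same Weyl group element; the outer case is handled in the same way, carrying $\nu$ along. I expect this step — that $w_\Omega$ lifts to an involution compatible with $\sigma_1$ — to be the genuine difficulty, precisely because the sequence $0\to T\to N_G(T)\to W(G,T)\to 0$ need not split.

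With $\tau$ in hand the conclusion follows quickly. Since $\tau$ and $\sigma_2$ agree on $\liet$ they agree on $\liet_1$, so $\tau$ preserves $\liet_1$ and $\liet_1^{\tau} = \liet_1^{\sigma_2} = \lies$. Setting $K' := (G^{\sigma_1}\cap G^{\tau})_0$ and using $Z_{\liek'}(\lies)\subseteq Z_{\liek_1}(\lies) = \liet_1$ together with $\liet_1\cap\liek' = \liet_1^{\tau} = \lies$, we see that $S$ is a maximal torus of $K'$. Hence $G/K'$ is a $\ZZ_2\times\ZZ_2$-symmetric space presented with compact connected groups, so $(G,K')$ is isotropy formal by the known result on $\ZZ_2\times\ZZ_2$-symmetric spaces. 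Two applications of \cref{prop-formality-depends-on-torus} through the common maximal torus $S$ then show that $(G,K')$, $(G,S)$, and $(G,K)$ are isotropy formal simultaneously, which completes the proof.
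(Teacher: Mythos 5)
Your overall strategy coincides with the paper's: replace $\sigma_2$ by a genuine involution $\tau$ agreeing with it on $\liet_1$, check that $S$ remains a maximal torus of $(G^{\sigma_1}\cap G^{\tau})_0$, and conclude via the $\ZZ_2\times\ZZ_2$ result and \cref{prop-formality-depends-on-torus}; your final step is fine. The gap sits in the crux, exactly where you anticipated the difficulty. You never make the one normalization that renders the lifting problem trivial: the paper first shows (\cref{lem-good-notion-of-positivity}) that one may choose $\Delta^{+}$ so that $\sigma_1$ \emph{itself} equals the positivity-preserving automorphism $\nu$ (this uses that $\sigma_1$ fixes $T_1$ pointwise and that $\liet = Z_{\lieg}(\liet_1)$, so $\sigma_1$ preserves a notion of positivity, and that $\sigma_1\circ\nu^{-1}$ is inner and positivity-preserving, hence trivial on $T$). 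With $\nu = \sigma_1$, \cref{prop-strongly-orthogonal} gives $\lieg_{\alpha}\subseteq\liep$ for $\alpha\in\Omega$, so the representatives $x_{\alpha} = \exp(E_{\alpha}+\overline{E}_{\alpha})$ satisfy $\nu(x_{\alpha}) = x_{\alpha}^{-1}$, and after splitting off the $\nu$-fixed part $t_{+}$ of the toral correction $t$ one gets $h\cdot\nu(h) = e$ on the nose for $h = xt_{-}$; the lift $\tau = c_h\circ\nu$ is then an involution with no centrality or parity computation, uniformly in the inner and outer cases.

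Your substitute for this step does not go through in the outer case. You assert $\nu(n_{\alpha}) = n_{\alpha}^{\pm 1}$, but \cref{prop-strongly-orthogonal} only says that $\sigma_1$ acts as $-\id$ on $\lieg_{\alpha}$; your $\nu$ differs from $\sigma_1$ by an inner automorphism $c_p$, which can rescale $E_{\alpha}$ by an arbitrary unit complex number, so $\nu(E_{\alpha}+\overline{E}_{\alpha})$ need not be $\pm(E_{\alpha}+\overline{E}_{\alpha})$ and $\nu(n_{\alpha})$ need not be a power of $n_{\alpha}$. Moreover, the parity statement you invoke, \cref{cor-inner-automorphisms-normal-form-for-roots-vanishing-on-maximal-torus}, is proved only under the hypothesis that $\sigma_1$ is inner, so ``the same parity mechanism'' is simply not available when both automorphisms are outer; and even in the inner case, passing from ``$\sum_{\alpha\in\Omega^{+}}2\langle\alpha,\beta\rangle/\langle\alpha,\alpha\rangle$ even for all $\beta$'' to ``$n^{2}$ central'' via \cref{cor-simple-reflections-on-weight-spaces} excludes type $\liealg{g}_2$ and assumes $\lieg$ simple, whereas the theorem is stated for arbitrary compact connected $G$. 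All of these complications disappear once the normalization $\sigma_1 = \nu$ is made first, which is the idea your proposal is missing.
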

	
	\begin{lemma}\label{lem-good-notion-of-positivity}
		Under the assumptions of \cref{cor-eq-formality-in-good-cases} there exists a notion of positivity $\Delta^{+}$ such that $\sigma_1 = \nu$ 
		and $\sigma_2 = c_g\circ\nu$ where $\Ad_g|_{\liet} = \prod_{\alpha\in\Omega^{+}} s_{\alpha}$ and where $\nu$ is an automorphism of $G$ 
		which fixes $\Omega$ pointwise and permutes $\Delta^{+}$.
	\end{lemma}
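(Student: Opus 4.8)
The plan is to take $\nu := \sigma_1$ itself; the only real content is then to produce a $\sigma_1$-invariant notion of positivity and to check that, with respect to it, the inner part of $\sigma_2$ is the advertised product of root reflections. First I would record that no $\lieg$-root vanishes on $\liet_1$. Indeed, $\sigma_1$ acts on $\liet$ as an involution with $(+1)$-eigenspace $\liet^{\sigma_1}$, and $\liet^{\sigma_1}$ is a toral subalgebra of $\liek_1$ containing the maximal torus $\liet_1$, so $\liet^{\sigma_1} = \liet_1$; a root $\alpha$ vanishing on $\liet_1$ lies in $\Omega$ (as $\lies\subseteq\liet_1$), hence is fixed by $\sigma_1$ by \cref{prop-strongly-orthogonal}(2), hence also vanishes on the $(-1)$-eigenspace of $\sigma_1|_{\liet}$ and therefore on all of $\liet$ — impossible. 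Consequently $\liet_1$ contains an element $X_0$ with $\alpha(X_0)\neq 0$ for every $\alpha\in\Delta$, and since $\sigma_1$ fixes $\liet_1$ pointwise, the positive system $\Delta^{+} := \{\alpha\in\Delta : \alpha(\I X_0)>0\}$ is preserved by $\sigma_1$. Thus $\nu := \sigma_1$ permutes $\Delta^{+}$ and, again by \cref{prop-strongly-orthogonal}(2), fixes $\Omega$ pointwise.

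Next I would address the inner part. Since $\sigma_1\circ(\sigma_2)^{-1}$, hence also $\sigma_2\circ\sigma_1^{-1}$, is inner, write $\sigma_2\circ\sigma_1^{-1} = c_g$; as $\sigma_1$ and $\sigma_2$ both preserve $T$ and fix $\lies$ pointwise, so does $c_g$, so $g$ may be chosen in $N_G(T)\cap Z_G(S)$, and then $\sigma_2 = c_g\circ\nu$. It remains to identify $\Ad_g|_{\liet}$. The group $Z_G(S)$ is connected (\cite[corollary IV.4.51]{Knapp}) and, by \cref{prop-centralizers-of-subtori}, has root system $\Omega$ with respect to its maximal torus $T$; by \cref{prop-strongly-orthogonal}(3) the roots of $\Omega$ are pairwise strongly orthogonal, so this root system is a disjoint union of rank-one subsystems $\{\pm\alpha\}$, $\alpha\in\Omega^{+}$, whence $W(Z_G(S),T) = \langle s_\alpha : \alpha\in\Omega^{+}\rangle$ is elementary abelian of order $2^{|\Omega^{+}|}$. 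Now $\Ad_g|_{\liet}$ represents an element of this group, and for $\alpha\in\Omega$ one computes $c_g(\alpha) = \sigma_2(\sigma_1^{-1}(\alpha)) = \sigma_2(\alpha) = -\alpha$ using \cref{prop-strongly-orthogonal}(2) and property $\star$; since the $H_\alpha$ with $\alpha\in\Omega^{+}$ form an orthogonal basis of $\bigoplus_{\alpha\in\Omega^{+}}\CC H_\alpha$, the only element of $W(Z_G(S),T)$ sending each $H_\alpha$ to $-H_\alpha$ is $\prod_{\alpha\in\Omega^{+}} s_\alpha$, which moreover fixes $\bigcap_{\alpha\in\Omega^{+}}\ker\alpha$ pointwise; by \cref{prop-strongly-orthogonal}(4) this forces $\Ad_g|_{\liet} = \prod_{\alpha\in\Omega^{+}} s_\alpha$ on all of $\liet$. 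Combined with the previous paragraph this is exactly the claim.

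The step I expect to cost the most care is the structural input in the second paragraph: one must make sure $g$ can be taken simultaneously in $N_G(T)$ and in $Z_G(S)$ — so that $\Ad_g|_{\liet}$ is genuinely an element of $W(Z_G(S),T)$ rather than only of $W(G,T)$ — and that strong orthogonality of $\Omega$ really forces $W(Z_G(S),T)$ to be the elementary abelian $2$-group in which $\prod_{\alpha\in\Omega^{+}} s_\alpha$ is the unique element reflecting every root. The positivity argument of the first paragraph should be routine once one knows no root vanishes on $\liet_1$, and property $\star$ enters only through the short computation $c_g(\alpha) = -\alpha$.
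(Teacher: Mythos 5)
Your proof is correct, but it reaches the key identity $\Ad_g|_{\liet}=\prod_{\alpha\in\Omega^{+}}s_{\alpha}$ by a genuinely different route than the paper. The paper first invokes \cref{thm-normal-form-automorphisms} (the inductive normal-form theorem) to write $w\sigma_2w^{-1}|_{\liet}$ as a product of reflections composed with a positivity-preserving $\nu$, then replaces $\nu$ by $\sigma_1$, and finally spends most of the proof analysing the correction term $f=w^{-1}\circ\nu|_{\liet}\circ w\circ(\nu|_{\liet})^{-1}$: it shows $f$ is an involutive Weyl group element, hence a product of commuting reflections $s_{\alpha}$ with $\alpha$ vanishing on $\lies$, and concludes $f=\id$ from \cref{prop-some-power-of-vanishing-root-is-reflected}. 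You bypass the normal-form theorem entirely: after producing the same $\sigma_1$-invariant positivity (your observation that no root vanishes on $\liet_1$ is exactly the content of the paper's terse opening line), you define $g$ directly by $c_g=\sigma_2\circ\sigma_1^{-1}$ and observe that $g$ lies in $N_G(T)\cap Z_G(S)=N_{Z_G(S)}(T)$, so that $\Ad_g|_{\liet}$ lives in the Weyl group of the connected group $Z_G(S)$, whose root system is $\Omega$ by \cref{prop-centralizers-of-subtori}. Strong orthogonality (\cref{prop-strongly-orthogonal}) makes this Weyl group elementary abelian, acting by independent sign changes on the orthogonal basis $\{H_{\alpha}\}_{\alpha\in\Omega^{+}}$ and trivially on $\bigcap_{\alpha\in\Omega^{+}}\ker\alpha$, and property $\star$ together with $\sigma_1(\alpha)=\alpha$ pins down the element uniquely. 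Each step checks out (in particular, any representative $g$ of the inner automorphism $\sigma_2\sigma_1^{-1}$ automatically normalizes $T$ and centralizes $S$, so no careful choice is needed). What your argument buys is a self-contained, shorter proof that does not lean on the heavy induction of \cref{thm-normal-form-automorphisms}; what the paper's route buys is that the same normal-form machinery is needed elsewhere anyway (e.g.\ in \cref{sec-involution-inner-other-outer}), so reusing it here keeps the toolset uniform. Either way the lemma is established.
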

	\begin{proof}
		Because $\liet = Z_{\lieg}(\liet_1)$, there is a notion of positivity $\Delta^{+}$ on $\Delta$ which is preserved by $\sigma_1$. We now apply \cref{thm-normal-form-automorphisms} to choose an 
		element $w \in W(G, T)$ such that 
		\begin{align*}
			w\sigma_2w^{-1}|_{\liet} &= \prod_{\alpha\in\tilde{\Omega}^{+}} s_{\alpha}\circ\nu|_{\liet}
		\end{align*}
		for $\tilde{\Omega} = w(\Omega)$ and some automorphism $\nu\colon G\rightarrow G$ which permutes $\Delta^{+}$ and fixes each element in 
		$\tilde{\Omega}^{+}$. Note that $w^{-1}\nu w = c_k\circ \nu$ for some $k \in N_G(T)$. Therefore, we find that $\sigma_2 = c_g\circ \nu$ for some element 
		$g \in N_G(T)$. The assumption that $\sigma_1\circ (\sigma_2)^{-1}$ is an inner automorphism means that $\sigma_1\circ\nu^{-1}$ is inner. But $\sigma_1\circ\nu^{-1}$ 
		preserves $\Delta^{+}$, and therefore $\sigma_1\circ \nu^{-1}$ is an inner automorphism fixing $T$ pointwise. So, replacing $\nu$ by $c_t\circ \sigma_1$ for some 
		$t \in T$ if necessary, we may assume that $\sigma_1 = \nu$ and that $\sigma_2 = c_g\circ \nu$. In particular, the above description of $w\sigma_2w^{-1}|_{\liet}$ shows
		that $\sigma_2|_{\liet}$ is an involution. Let us now prove that $\Ad_g|_{\liet}$ is a product of root reflections which commute with $\nu|_{\liet}$. We set 
		$r := \prod_{\alpha\in\Omega^{+}} s_{\alpha}$ and $f := w^{-1}\circ \nu|_{\liet}\circ w\circ (\nu|_{\liet})^{-1}$. This gives  
		\begin{align*}
			\sigma_2|_{\liet} &= \prod_{\alpha\in\Omega^{+}} s_{\alpha}\circ w^{-1}\circ \nu\circ w = r \circ f\circ \nu.
		\end{align*}
		
		As a first step, we show that $f$ leaves the space $V = \bigcap_{\alpha\in\Omega^{+}} \ker \alpha$ invariant. Indeed, if $\beta \in \Omega$ is arbitrary, then since 
		$\sigma_1 = \nu$, we have $\nu(\beta) = \beta$ by \cref{prop-strongly-orthogonal}. Because $\sigma_1$ and $\sigma_2$ satisfy property $\star$ by assumption, we also have 
		$-\beta = \sigma_2(\beta)$, and this shows that $f(\beta) = \beta$, because $r(\beta) = -\beta$ and $r$ is a bijection. As 
		$\liet = V\oplus \bigoplus_{\alpha\in\Omega^{+}} \CC H_{\alpha}$ by \cref{prop-strongly-orthogonal}, and since $f$ is an isometry, we conclude that $f(V) = V$.
		Next, we show that $f\circ \nu|_{\liet} = \nu|_{\liet}\circ f^{-1}$. In fact, since $f(V) = V$ and $r|_V = \id_V$, we see that $f$ and $r$ commute. On the other hand, because 
		$\sigma_2|_{\liet}$, $r$, and $\nu$ are involutions, and because $r$ commutes with $\nu = \sigma_1$, we also have
		\begin{align*}
			\id_{\liet} &= (r\circ f\circ \nu|_{\liet})\circ (r\circ f\circ \nu|_{\liet}) =   f\circ \nu|_{\liet} \circ f\circ (\nu|_{\liet})^{-1},
		\end{align*} 
		and this shows that $f\circ \nu|_{\liet} = \nu|_{\liet}\circ f^{-1}$. 
		Now decompose $V = V^{+}\oplus V^{-}$ into the $1$- and $(-1)$-eigenspaces of the involution $\nu|_{\liet}$. We claim that $f$ also preserves $V^{+}$ and $V^{-}$.
		To see this, recall that $\sigma_2$ commutes with $\sigma_1$, hence $\sigma_2$ preserves $V^{+}$ and $V^{-}$. But $r|_V = \id$ and $\nu$ preserves $V^{+}$ and
		$V^{-}$, hence so must do $f$.
		This now allows us to prove that $f$ must be an involution. In fact, if $v \in V$ is an eigenvector for $\nu$ to the eigenvalue $\lambda$, then
		\begin{align*}
			\lambda\cdot f(v) &= f(\nu(v)) = \nu(f^{-1}(v)) = \lambda\cdot f^{-1}(v),
		\end{align*}		 
		and this gives $f^2|_V = \id_V$. As we have already seen that $f$ fixes each vector $H_{\beta}$ with $\beta \in \Omega$, we see that $f$ is an involution.
		In particular, $f\circ \nu|_{\liet} = \nu|_{\liet} \circ f$.
		
		But note that $f \in W(G, T)$, because $\nu$ leaves $T$ invariant, and that a Weyl group element can only be an involution if it is a product of commuting
		root reflections, cf.\,\cite[problem II.12.13]{Knapp}. Therefore, $f = \prod_{\alpha \in \Phi} s_{\alpha}$ for some set of roots $\Phi$. However,
		if $\alpha \in \Phi$, then for any $X \in \lies$ we have
		\begin{align*}
			\langle H_{\alpha}, X\rangle &= \langle \sigma_2(H_{\alpha}), X\rangle = \langle (r\circ f\circ \nu)(H_{\alpha}), X\rangle = -\langle H_{\alpha}, X\rangle,
		\end{align*}
		because $r$ and $\sigma_1$ are isometries of $\langle\cdot,\cdot\rangle$ which fix $\lies$ and commute with $f$. This shows that
		$\alpha$ restricts to zero on $\liealg{s}$, and hence $\Phi \subseteq \Omega$, which by \cref{prop-some-power-of-vanishing-root-is-reflected} is only possible if $\Phi = \emptyset$.
	\end{proof}
	
	\begin{proof}[Proof of \cref{cor-eq-formality-in-good-cases}]
		We choose a notion of positivity $\Delta^{+}$ as in \cref{lem-good-notion-of-positivity}. Thus, $\sigma_1 = \nu$ and $\sigma_2 = c_g\circ \nu$ 
		for an automorphism $\nu$ which permutes $\Delta^{+}$ and fixes each root in $\Omega$. Moreover, $\Ad_g|_{\liet}$ is the product $\prod_{\alpha\in\Omega^{+}} s_{\alpha}$ 
		of root reflections $s_{\alpha}$ with $\alpha \in \Omega^{+}$. 
		
		Because $\nu = \sigma_1$ and $\sigma_2$ commute, we have $c_g\circ \nu = \nu\circ c_g$, so $\nu$ and $c_g$ commute. As is well-known (see 
		e.\,g.\,\cite[theorem IV.4.54]{Knapp} or \cref{prop-simple-root-reflections} below), each root reflection $s_{\alpha}$ can be represented by 
		$\exp(E_{\alpha} + \overline{E}_{\alpha})$ for some element $E_{\alpha} \in \lieg_{\alpha}$. We now choose one such element $x_{\alpha} \in N_G(T)$ for each 
		$\alpha \in \Omega^{+}$. Since $\lieg_{\alpha}$ is contained in the $(-1)$-eigenspace of $\nu$ for all $\alpha \in \Omega$ (\cref{prop-strongly-orthogonal}), we then have 
		$\nu(x_{\alpha}) = (x_{\alpha})^{-1}$. Let then $x = \prod_{\alpha\in\Omega^{+}} x_{\alpha}$, which is a well-defined expression, because the factors of this product 
		commute with each other by \cref{prop-strongly-orthogonal}. As already observerd earlier, $g$ and $x$ represent the same 
		Weyl group element. Thus there exists some $t \in T$ with $g = xt$, and because $\liet$ decomposes into the $1$- and $(-1)$-eigenspaces of the involution $\nu$, we can 
		accordingly decompose $t$ as $t = t_{+}t_{-}$ with $\nu(t_{+}) = t_{+}$ and $\nu(t_{-}) = (t_{-})^{-1}$. Consider then the element $h := g\cdot (t_{+})^{-1}$. Since 
		$\nu(t_{+}) = t_{+}$, we still have $c_h\circ\nu = \nu\circ c_h$, that is to say, $\nu$ and $\tau_2 := c_h\circ\nu$ commute. In addition, we have 
		\begin{align*}
			h\cdot \nu(h) &= (xt_{-})\cdot \nu(xt_{-}) \\
			&= (xt_{-})\cdot (xt_{-})^{-1},
			\intertext{and this implies that $\tau_2$ is an involution. Indeed, $\nu$ is an involution, whence}
			(\tau_2)^2 &= c_{h\nu(h)}\circ \nu^2 \\
			&= \id.
		\end{align*}
		We claim that $\lies$ is a maximal torus of $\lieg^{\nu} \cap \lieg^{\tau_2}$. To see this, suppose that $Y \in Z_{\lieg}(\lies)$ is fixed by $\nu$ and $\tau_2$. Because each 
		$\lieg_{\alpha}$ with $\alpha \in\Omega$ is contained in the $(-1)$-eigenspace of $\nu$, we must have $Y \in \liet_1$. Moreover, 
		$\sigma_2|_{\liet} = \tau_2|_{\liet}$, and hence $Y \in (\liet_1)^{\sigma_2}$. Since the latter space equals $\lies$ by construction, we see that $\lies$ is indeed a 
		maximal torus of $H := (G^{\nu} \cap G^{\tau_2})_0$. Then the pair $(G, H)$ is isotropy formal, because $\nu$ and $\tau_2$ are involutions 
		\cite{AmannKollross} (or \cite{thesis}), and hence also $(G, K)$ is isotropy formal by \cref{prop-formality-depends-on-torus}, because $S$ is maximal torus for both $K$ and $H$.
	\end{proof}
	
	\begin{corollary}
		If $G$ is simple and $\sigma_1$ and $\sigma_2$ are both inner automorphisms or both outer automorphisms, then the isotropy action is equivariantly formal.
	\end{corollary}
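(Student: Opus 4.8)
The plan is to reduce the statement to \cref{cor-eq-formality-in-good-cases}. That theorem guarantees isotropy formality of $(G, K)$ as soon as two conditions are met: that $\sigma_1$ and $\sigma_2$ satisfy property $\star$, and that $\sigma_1\circ(\sigma_2)^{-1}$ is an inner automorphism. Since here $G$ is assumed to be simple, the first condition is automatic: \cref{thm-reflection} says precisely that $\sigma_2(\alpha) = -\alpha$ for every $\alpha \in \Omega$, which is property $\star$. Hence the only thing I would need to check is that $\sigma_1\circ(\sigma_2)^{-1}$ is inner, and for this I would pass to the outer automorphism group $\Out(G)$, which for simple $G$ is isomorphic to the automorphism group of the Dynkin diagram of $G$ by \cite[theorem X.3.29]{Helgason}.

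If $\sigma_1$ and $\sigma_2$ are both inner there is nothing to prove, so the remaining case is that both are outer. Writing $\overline{\sigma}_i$ for the class of $\sigma_i$ in $\Out(G)$, the fact that $\sigma_1$ is an involution which is not inner means that $\overline{\sigma}_1$ has order exactly $2$, and since $\sigma_1$ and $\sigma_2$ commute, $\overline{\sigma}_1$ and $\overline{\sigma}_2$ commute in $\Out(G)$. If $G$ is not of type $\liealg{d}_4$, then $\Out(G)$ has order at most $2$, and as $\overline{\sigma}_1$, $\overline{\sigma}_2$ are both nontrivial this forces $\overline{\sigma}_1 = \overline{\sigma}_2$. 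If $G$ is of type $\liealg{d}_4$, then $\Out(G)$ is the symmetric group on three letters, in which the centralizer of the order-$2$ element $\overline{\sigma}_1$ is exactly the cyclic subgroup it generates; since $\overline{\sigma}_2$ is nontrivial and commutes with $\overline{\sigma}_1$, again $\overline{\sigma}_1 = \overline{\sigma}_2$. In either case $\sigma_1\circ(\sigma_2)^{-1}$ lies in $\mathrm{Int}(G)$, so \cref{cor-eq-formality-in-good-cases} applies and $(G, K)$ is isotropy formal.

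There is essentially no obstacle to overcome: the corollary is an immediate consequence of \cref{thm-reflection} and \cref{cor-eq-formality-in-good-cases}, and the only mild subtlety is the case $\lieg \cong \liealg{d}_4$, which is dispatched by the elementary remark on centralizers of transpositions in the symmetric group on three letters — the same observation already used in the proof of \cref{prop-formality-if-centralizer-is-torus}.
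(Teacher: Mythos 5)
Your proof is correct, and for all cases except one it coincides with the paper's argument: property $\star$ comes for free from \cref{thm-reflection}, and when both automorphisms are inner, or when both are outer and $\lieg$ is not of type $\liealg{d}_4$ (so that $\Out(G)$ has order at most $2$), one concludes exactly as you do via \cref{cor-eq-formality-in-good-cases}. Where you genuinely diverge is the case $\lieg \cong \liealg{d}_4$ with both $\sigma_1$ and $\sigma_2$ outer. The paper does \emph{not} feed this case into \cref{cor-eq-formality-in-good-cases}; instead it invokes the classification of involutions (\cref{prop-classification-of-involutions}) to see that $\liek_1$ is a sum of non-isomorphic ideals of type $\liealg{b}_p\oplus\liealg{b}_{3-p}$, each necessarily $\sigma_2$-invariant and admitting only inner automorphisms, so that $\lies$ is already a maximal torus of $\liek_1$ and isotropy formality follows from the symmetric-space case together with \cref{prop-formality-depends-on-torus}. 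Your alternative — that the centralizer of an order-$2$ element of $S_3 \cong \Out(G)$ is the cyclic group it generates, forcing $\overline{\sigma}_1 = \overline{\sigma}_2$ and hence $\sigma_1\circ(\sigma_2)^{-1}$ inner — is a complete and valid replacement: the class of an outer involution has order exactly $2$, the classes commute because $\sigma_1$ and $\sigma_2$ do, and \cref{cor-eq-formality-in-good-cases} then applies with no further input. Your route is the more uniform one, treating $\liealg{d}_4$ by the same mechanism as every other type and avoiding the classification of involutions entirely; the paper's route is a shortcut that lands directly on the already-known symmetric-space result without needing the (heavier) machinery behind \cref{cor-eq-formality-in-good-cases} for that subcase. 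Either is acceptable; there is no gap in yours.
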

	\begin{proof}
		If $G$ is simple, then $\sigma_1$ and $\sigma_2$ satisfy property $\star$ by \cref{thm-reflection}. Thus, if $\sigma_1$ and $\sigma_2$ are inner automorphisms
		or if $\lieg$ is a Lie algebra of isomorphism type different from $\liealg{d}_4$, then $(G, K)$ is isotropy formal by \cref{cor-eq-formality-in-good-cases}. Suppose
		now that $\lieg$ is of type $\liealg{d}_4$ and that $\sigma_1$ and $\sigma_2$ are outer automorphisms. By the classification of fixed point sets of involutions
		(\cref{prop-classification-of-involutions}), $\liek_1$ then must be a sum of (at most) two simple ideals of type $\liealg{b}_p$, $\liealg{b}_{3-p}$. These ideals can never be 
		isomorphic to each other, so must be preserved by $\sigma_2$, and since Lie algebras of type $\liealg{b}_k$ only admit inner automorphisms, we see that $\liealg{s}$ 
		must be a maximal torus of $\liek_1$. It follows that $(G, K)$ is isotropy formal, because of \cite{Goertsches-symmetric} or \cite[theorem 1.1]{Goertsches-automorphisms}. 
	\end{proof}
	
	Before closing this section, let us collect some results about the order of a Weyl group element and the order of its representatives, as in the later sections we will have to be able to 
	decide when a Weyl group element lifts to an element of the same order in $\mathrm{Int}(G)$. Let then $\alpha$ be a root and recall that $\langle\cdot,\cdot\rangle$ is 
	negative-definite. Thus, we may choose $E_{\alpha} \in \lieg_{\alpha}$ such that 
	\begin{align*}
		\langle E_{\alpha}, \overline{E_{\alpha}}\rangle &= -\frac{\pi^2}{2\langle\alpha,\alpha\rangle},
	\end{align*}
	As is well-known, if we let $g = \exp(E_{\alpha} + \overline{E_{\alpha}})$, then we have $\Ad_g = s_{\alpha}$ on $\liet$, see for example \cite[theorem IV.4.54]{Knapp}. 
	A straightforward computation using that the adjoint-representation is diagonalizable then determines $(\Ad_g)^2$ on each root space:

	\begin{proposition}\label{prop-simple-root-reflections}
		Set $Z = E_{\alpha} + \overline{E_{\alpha}}$ and $g = \exp(Z)$. Then we have
		\begin{enumerate}
			\item 
			$(\Ad_g)^2(E_{\alpha}) = E_{\alpha}$.
			
			\item 
			If $\beta$ is a root such that neither $\beta - \alpha$ nor $\beta + 3\alpha$ is a root, then 
			\begin{align*}
				(\Ad_g)^2|_{\lieg_{\beta}} &= e^{\I\pi\frac{2\langle\alpha,\beta\rangle}{\langle\alpha,\alpha\rangle}}\id.
			\end{align*}
		\end{enumerate}        
	\end{proposition}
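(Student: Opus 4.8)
The plan is to work with $(\Ad_g)^2 = \exp(2\,\ad_Z)$, where $Z = E_{\alpha}+\overline{E_{\alpha}}$, and to exploit that $\ad_Z$ is a diagonalizable operator on $\lieg$: indeed $Z$ lies in the compact real form $\lieg_{\RR}$, on which $\ad_Z$ is skew-symmetric with respect to the negative-definite form $\langle\cdot,\cdot\rangle$, so it is semisimple with purely imaginary eigenvalues. Since $\Ad_g|_{\liet} = s_{\alpha}$ by assumption, the operator $(\Ad_g)^2$ restricts to the identity on $\liet$, hence commutes with $\ad_{\liet}$ and therefore preserves every root space $\lieg_{\beta}$, acting there by a single scalar. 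Everything thus reduces to identifying these scalars, which I would do by restricting attention to the $\mathfrak{sl}_2(\CC)$-theory attached to $\alpha$.

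For (1) I would argue directly. The map $\Ad_g$ carries $\lieg_{\alpha}$ onto $\lieg_{s_{\alpha}(\alpha)} = \lieg_{-\alpha} = \CC\,\overline{E_{\alpha}}$, so $\Ad_g(E_{\alpha}) = c\,\overline{E_{\alpha}}$ for some $c \in \CC^{\times}$; as $Z \in \lieg_{\RR}$, the automorphism $\Ad_g$ commutes with complex conjugation, whence $\Ad_g(\overline{E_{\alpha}}) = \overline{c}\,E_{\alpha}$ and $(\Ad_g)^2(E_{\alpha}) = |c|^2\,E_{\alpha}$. Applying $\Ad_g$-invariance of $\langle\cdot,\cdot\rangle$ to $E_{\alpha}$ and $\overline{E_{\alpha}}$, and using that $\langle E_{\alpha},\overline{E_{\alpha}}\rangle \ne 0$ by the chosen normalization, forces $|c|^2 = 1$, which is the assertion.

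For (2) I would first pass to an $\mathfrak{sl}_2$-triple. Writing $H := [E_{\alpha},\overline{E_{\alpha}}]$, the usual structure theory gives $H = \langle E_{\alpha},\overline{E_{\alpha}}\rangle H_{\alpha}$, and a short check using $\langle E_{\alpha},\overline{E_{\alpha}}\rangle = -\pi^2/(2\langle\alpha,\alpha\rangle)$ shows that $e := \tfrac{2\I}{\pi}E_{\alpha}$, $f := \tfrac{2\I}{\pi}\overline{E_{\alpha}}$, $h := -\tfrac{4}{\pi^2}H$ satisfy the standard $\mathfrak{sl}_2$-relations, with $Z = -\tfrac{\I\pi}{2}(e+f)$; hence $(\Ad_g)^2 = \exp\bigl(-\I\pi\,\ad_{e+f}\bigr)$. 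If $\beta$ is a multiple of $\alpha$, i.e.\ $\beta = \pm\alpha$, the assertion is (1) or its complex conjugate. Otherwise the two hypotheses say exactly that $\beta$ is the lowest element of its $\alpha$-string and that this string has length at most three, so $M_{\beta} := \bigoplus_{n}\lieg_{\beta+n\alpha}$ contains no Cartan part, is $\ad_Z$-invariant, and — the string being unbroken with one-dimensional weight spaces — is an irreducible $\mathfrak{sl}_2$-module. On it the semisimple element $e+f$ has the same eigenvalues as $h$, namely the $h$-weights, which are mutually congruent modulo $2$; a direct computation identifies the $h$-weight of $\lieg_{\beta}$, the bottom of the string, with $\tfrac{2\langle\alpha,\beta\rangle}{\langle\alpha,\alpha\rangle}$. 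Consequently $\exp(-\I\pi\,\ad_{e+f})$ acts on all of $M_{\beta}$, and in particular on $\lieg_{\beta}$, by the single scalar $e^{\I\pi\frac{2\langle\alpha,\beta\rangle}{\langle\alpha,\alpha\rangle}}$, which is (2).

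None of the individual steps is hard; the only places demanding care are the bookkeeping of the normalizing constants when introducing $e$, $f$, $h$, and the identification of the $h$-weight of $\lieg_{\beta}$ with $\tfrac{2\langle\alpha,\beta\rangle}{\langle\alpha,\alpha\rangle}$. The role of the hypotheses on $\beta$ is essentially to keep the $\alpha$-string short: with the string of length at most three one may, if one prefers to avoid invoking irreducibility of root-string modules, simply write down the matrix of $2\,\ad_Z$ on the space $M_{\beta}$ of dimension at most three and exponentiate it explicitly.
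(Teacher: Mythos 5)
Your proof is correct, and it takes a genuinely different route from the paper's. The paper likewise starts from semisimplicity of $\ad_Z$ and restricts to the invariant subspaces $\lieg_{-\alpha}\oplus\CC[\overline{E_{\alpha}},E_{\alpha}]\oplus\lieg_{\alpha}$ and $\bigoplus_{n\geq 0}\lieg_{\beta+n\alpha}$, but then writes out the matrix of $\ad_Z$ in an explicit basis, computes the characteristic polynomial separately in the cases $k=0,1,2$ (where $k$ is the length of the string), reads off the eigenvalues $0,\pm\I\pi$ resp.\ $\pm\I\tfrac{\pi}{2}$, and exponentiates --- precisely the fallback you describe in your closing sentence. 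You replace this with two more structural arguments: for (1), the observation that $\Ad_g$ interchanges $\lieg_{\alpha}$ and $\lieg_{-\alpha}$, commutes with conjugation and preserves the invariant form, which forces the relevant scalar to be of modulus one; for (2), the identification of the $\alpha$-string through $\beta$ as an irreducible $\mathfrak{sl}_2$-module together with the conjugacy of $e+f$ and $h$ in $\mathfrak{sl}_2(\CC)$ and the fact that the $h$-weights of an irreducible module are mutually congruent modulo $2$. Your version is cleaner and makes transparent \emph{why} only the residue of $2\langle\alpha,\beta\rangle/\langle\alpha,\alpha\rangle$ modulo $2$ enters; the paper's version is more self-contained, needing neither the irreducibility of root strings as $\mathfrak{sl}_2$-modules nor the conjugacy of $e+f$ with $h$. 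The only places that genuinely require care are the ones you flag yourself: the normalization check $[h,e]=2e$ (which works out, since $\alpha(H)=-\pi^2/2$ for your $H=[E_{\alpha},\overline{E_{\alpha}}]$) and the identification of the bottom $h$-weight with $2\langle\alpha,\beta\rangle/\langle\alpha,\alpha\rangle=-q$, which is the standard string formula $p-q=2\langle\beta,\alpha\rangle/\langle\alpha,\alpha\rangle$ with $p=0$.
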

	\begin{proof}
	Set $U = [\overline{E}_{\alpha}, E_{\alpha}]$, so that $U = \pi^2/(2\langle\alpha,\alpha\rangle) H_{\alpha}$. We note that $\ad_Z$ is a skew-hermitian endomorphism with respect to the Hermitian inner product $(X, Y) \mapsto \langle X, \overline{Y}\rangle$ on $\lieg$. Therefore, $\ad_Z$ is diagonalizable with purely imaginary eigenvalues, and we will compute these eigenvalues on the $\ad_Z$-invariant (and hence $\Ad_g$-invariant) subspaces 
	\begin{align*}
	 	V &= \lieg_{-\alpha}\oplus \CC U\oplus \lieg_{\alpha}\text{ and }W = \bigoplus_{n\geq 0} \lieg_{\beta+n\alpha}.
	\end{align*}
	In fact, in terms of the basis $E_{-\alpha}$, $U$, $E_{\alpha}$ of $V$ the matrix of $\ad_Z|_V$ is given by
	\begin{align*}
		\begin{pmatrix}
			0   & \alpha(U)     & 0 \\
			-1  & 0             & 1 \\
			0   & -\alpha(U)    & 0
		\end{pmatrix},
	\end{align*}
	and since $\alpha(U) = \pi^2/2$, we have $p(t) = (-t)\cdot (t+\I\pi)\cdot (t-\I\pi)$ for the characteristic polynomial $p(t)$ of $\ad_Z|_V$. Thus, $\ad_Z$ is diagonalizable with
	 eigenvalues $0$, $\I\pi$ and $-\I\pi$. Then using the series expansion of $\Ad_g$ it follows that $\Ad_g$ is diagonalizable with eigenvalues $e^0 = 1$ and $e^{\pm\I\pi} = -1$. 
	 But then necessarily $(\Ad_g)^2|_V = \id_V$.
	
	To compute the eigenvalues of $\ad_Z|_W$ we consider the basis $((\ad_Z)^k(E_{\beta}))_\ell$ of $W$, where $\ell = 0, \ldots, k$ and $k$ is the maximal integer such that 
	$\beta + k\alpha$ is a root. This is indeed a basis of $W$, because the $\alpha$-string through $\beta$ has no gaps. In particular, $k \in \{0, 1, 2\}$ by assumption, and since 
	we treated the case $\beta = \alpha$, we may assume $\beta \neq \alpha$. Then if $k = 0$, $\ad_Z = 0$ on $W$ and hence $\Ad_g = \id$. Thus, we may assume that either 
	$k = 1$ or $k = 2$, and then the matrix of $\ad_Z|_W$ is given, respectively, by 
	\begin{align*}
		\begin{pmatrix}
			0   & \beta(U)    \\
			1   & 0
		\end{pmatrix}
	\text{ or }
		\begin{pmatrix}
			0   & \beta(U)  & 0 \\
			1   & 0         & (\alpha+2\beta)(U)    \\
			0   & 1         & 0
		\end{pmatrix}.
	\end{align*}
	As noted before, $\alpha(U) = \pi^2/2$. Moreover, by assumption, $\beta - \alpha$ is not a root, so the $\alpha$-string through $\beta$ contains 
	only roots of the form $\beta + j\alpha$, and we have
	\begin{align*}
	 \beta(U) &= \frac{\pi^2}{2\langle\alpha,\alpha\rangle}\cdot\langle\beta,\alpha\rangle
	 = \frac{\pi^2}{4}\cdot \frac{2\langle\alpha,\beta\rangle}{\langle\alpha,\alpha\rangle}
	 = - k\frac{\pi^2}{4},
	\end{align*}
	cf. \cite[proposition II.2.48]{Knapp} or \cite[proposition II.8.4]{Humphreys-lie-algebras}. Thus, in case that $k = 1$, the characteristic polynomial $q(t)$ of $\ad_Z|_W$ is given by $q(t) = (t+\I\frac{\pi}{2})(t-\I\frac{\pi}{2})$. Arguing as before, we see that $(\Ad_g)^2$ is diagonalizable with eigenvalues $e^{\pm\I\pi} = -1$, so $(\Ad_g)^2 = -\id$ on $W$, as claimed. If $k = 2$, then 
	\begin{align*}
		q(t) &= (-t)\cdot(t^2 - (\alpha+3\beta)(U)) = (-t)\cdot (t+\I\pi)\cdot(t-\I\pi), 
	\end{align*}
	and $(\Ad_g)^2 = \id$ on $W$, which again agrees with our claim.
     \end{proof}     

	\begin{corollary}\label{cor-simple-reflections-on-weight-spaces}
		Let $\alpha$ be a root. If $\lieg$ is simple, but not of type $\lieg_2$, we have, for all roots $\beta$:
		\begin{align*}
		(\Ad_g)^2|_{\lieg_{\beta}} = e^{\I\pi\frac{2\langle\alpha,\beta\rangle}{\langle\alpha,\alpha\rangle}}\cdot \id.
		\end{align*}
	\end{corollary}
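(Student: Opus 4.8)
The plan is to deduce the corollary directly from Proposition~\ref{prop-simple-root-reflections}, using the standard bound on the lengths of root strings. Recall that if $\lieg$ is simple and not of type $\lieg_2$, then every Cartan integer $\tfrac{2\langle\alpha,\beta\rangle}{\langle\alpha,\alpha\rangle}$ lies in $\{0,\pm1,\pm2\}$, so that every $\alpha$-root string contains at most three roots; see, e.\,g., \cite[section 9.4]{Humphreys-lie-algebras}. Equivalently, whenever $\beta$ is a root with $\beta-\alpha\notin\Delta$, then also $\beta+3\alpha\notin\Delta$, so that the hypotheses of Proposition~\ref{prop-simple-root-reflections}(2) are automatically met by \emph{every} root lying at the bottom of its $\alpha$-string.

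First I would extract from the proof of Proposition~\ref{prop-simple-root-reflections} the slightly more precise fact that, for such a bottom root $\beta\neq\pm\alpha$, the operator $(\Ad_g)^2$ acts as the single scalar $e^{\I\pi\frac{2\langle\alpha,\beta\rangle}{\langle\alpha,\alpha\rangle}}$ on the \emph{whole} $\ad_Z$-invariant string space $W=\bigoplus_{n\geq 0}\lieg_{\beta+n\alpha}$, and not merely on $\lieg_{\beta}$. Indeed, the eigenvalue computations for $\ad_Z|_W$ carried out there show that $(\Ad_g)^2|_W=\pm\id_W$, the sign being $+1$ when the string has one or three roots and $-1$ when it has two; and since for the bottom root one has $\tfrac{2\langle\alpha,\beta\rangle}{\langle\alpha,\alpha\rangle}=-k$, where $k\in\{0,1,2\}$ is one less than the number of roots in the string, this sign is exactly $e^{\I\pi\frac{2\langle\alpha,\beta\rangle}{\langle\alpha,\alpha\rangle}}$.

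Then, given an arbitrary root $\beta\neq\pm\alpha$, I would pass to the bottom $\beta_0=\beta-p\alpha$ of the $\alpha$-string through $\beta$ (so $p\geq 0$ and, since the $\alpha$-strings through $\pm\alpha$ are singletons, $\beta_0\neq\pm\alpha$). Since $\lieg_{\beta}\subseteq\bigoplus_{n\geq 0}\lieg_{\beta_0+n\alpha}$, the previous step yields
\[
 (\Ad_g)^2|_{\lieg_{\beta}}=e^{\I\pi\frac{2\langle\alpha,\beta_0\rangle}{\langle\alpha,\alpha\rangle}}\cdot \id.
\]
Finally $\tfrac{2\langle\alpha,\beta_0\rangle}{\langle\alpha,\alpha\rangle}=\tfrac{2\langle\alpha,\beta\rangle}{\langle\alpha,\alpha\rangle}-2p$ and $e^{-2\pi\I p}=1$, so the exponent may be reduced modulo $2$ and the asserted formula follows; the remaining cases $\beta=\pm\alpha$ are immediate from Proposition~\ref{prop-simple-root-reflections}, since there $e^{\pm 2\pi\I}=1$. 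The only genuinely delicate point is this middle step: one must know that $(\Ad_g)^2$ is a \emph{scalar} on the entire string space in order to transport the eigenvalue from the bottom root to an arbitrary member of the string, and it is precisely here that the hypothesis $\lieg\not\cong\lieg_2$ — equivalently, the absence of four-root strings, which would give $k=3\notin\{0,1,2\}$ — enters.
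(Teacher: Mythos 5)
Your argument is correct, but it follows a genuinely different route from the one in the paper. The paper first uses that the Weyl group acts transitively on roots of a fixed length to reduce to the case where $\alpha$ is the highest long or the highest short root; for the highest long root $\delta$ it checks the hypotheses of \cref{prop-simple-root-reflections} for $-\beta$ for \emph{every} positive $\beta$ and transfers back via realness of $\Ad_g$, while for the highest short root it verifies the formula only on the (at most one) simple root not perpendicular to it and then propagates to all roots using that the simple root spaces generate $\lieg$ and that $\beta\mapsto e^{\I\pi\langle\alpha,\beta\rangle\cdot 2/\langle\alpha,\alpha\rangle}$ is multiplicative in $\beta$; the exclusion of $\lieg_2$ enters there to rule out a four-element string. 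You instead work with an arbitrary $\alpha$ and $\beta$ directly, descend to the bottom $\beta_0$ of the $\alpha$-string through $\beta$, observe that outside type $\lieg_2$ the hypotheses of \cref{prop-simple-root-reflections}(2) are automatic for $\beta_0$, and then ride the string back up. This is shorter and avoids both the Weyl-group reduction and the generating-set argument, but it comes at the price of using the \emph{proof} rather than the statement of \cref{prop-simple-root-reflections}: you correctly note that the eigenvalue computation there shows $(\Ad_g)^2$ is the scalar $(-1)^k$ on the whole string space $W$, which is what lets you transport the eigenvalue from $\lieg_{\beta_0}$ to $\lieg_{\beta}$ (consistently, since the exponent changes by the even integer $2p$ along the string). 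The same caveat applies to your treatment of $\beta=-\alpha$: item (1) of the proposition only covers $\lieg_{\alpha}$, so for $\lieg_{-\alpha}$ you must either invoke the computation of $(\Ad_g)^2|_V=\id_V$ from the proof or argue by realness of $\Ad_g$ as the paper does. With those two extractions made explicit, your proof is complete and, in my view, cleaner than the published one.
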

	\begin{proof}
		We first note that if the statement holds for a root $\alpha$ and if $w \in N_G(T)$ is arbitrary, then the statement also is true for the root 
		$\rho = \Ad_w(\alpha) = \alpha\circ(\Ad_w)^{-1}$. Indeed, $\Ad_w$ is a real endomorphism and an isometry of $\langle\cdot,\cdot\rangle$, whence $k = \exp(\Ad_w(Z))$ 
		represents the simple reflection defined by $s_{\rho}$. Moreover, for any root $\beta$ we have 
		\begin{align*}
			(\Ad_k)^2|_{\lieg_\beta} &= \Ad_w\circ (\Ad_g)^2\circ (\Ad_w)^{-1}|_{\lieg_{\beta}}
			= e^{\I\pi\frac{2\langle\alpha,(\Ad_w)^{-1}\beta\rangle}{\langle\alpha,\alpha\rangle}}\cdot\id
			= e^{\I\pi\frac{2\langle\rho,\beta\rangle}{\langle\rho,\rho\rangle}}\cdot\id.
		\end{align*}
		Because $\lieg$ is simple, all elements of the same length are contained in the same Weyl group orbit and there are at most two different root lengths 
		\cite[lemma 10.4.C]{Humphreys-lie-algebras}, so we only need to verify the assertion of the corollary for the highest long and the highest short root. Furthermore, after having chosen an 
		arbitrary notion of positivity, we only need to check the claimed identity for positive roots, because $\lieg_{\beta}$ is conjugate to $\lieg_{-\beta}$ and because $\Ad_g$ 
		commutes with conjugation, being induced by a real endomorphism on $\lieg_{\RR}$. Thus, suppose that $\beta$ is a positive root and that $\alpha = \delta$ is the highest 
		long root. We may assume $\beta \neq \delta$. Then $\beta  + \delta$ is not a root, for it would be higher than $\delta$. Moreover, $\delta - \beta$ is either a positive root or 
		not a root, and hence $2\delta - \beta$ cannot be a root, because it again would be higher than $\delta$. Thus, we see that the assumptions of 
		\cref{prop-simple-root-reflections} are met for the root $-\beta$, and hence 
		\begin{align*}
			(\Ad_g)^2|_{\lieg_{-\beta}} = e^{-\I\pi\frac{2\langle\delta,\beta\rangle}{\langle\delta,\delta\rangle}}\cdot \id.
		\end{align*}
		Using again that $\Ad_g$ is a real endomorphism, the assertion follows in this case.
		
		Now assume that there are two root lengths and that $\alpha = \gamma$ is the highest short root. Let $\Pi$ be the set of simple roots. Then $\lieg$ is of type $\liealg{c}_r$, 
		$\liealg{b}_r$ or $\liealg{f}_4$, and from the classification of Dynkin diagrams and their highest short roots (\cite[table 2, section\,III.12.2]{Humphreys-lie-algebras}) we know that there is at 
		most one root $\beta \in \Pi$ with $\langle\gamma,\beta\rangle \neq 0$. Moreover, since $\lieg$ is not of type $\lieg_2$, we also know that for this root $\beta$ the 
		$\gamma$-string through $\beta$ is of the form $\beta - j\gamma$ with $0 \leq j \leq 2$; in particular, $\beta + \gamma$ is not a root. Hence, as before we conclude that 
		$(\Ad_g)^2|_{\lieg_{\beta}}$ is as claimed. Since the root spaces of simple roots form a generating set and $\langle\gamma,\cdot\rangle$ is linear, the assertion holds for all 
		roots $\beta$.
	\end{proof}		
	
	\section{The case that $\sigma_1$ is outer and $\sigma_2$ is inner.}\label{sec-involution-outer-other-inner}
	
	We continue to use the notation introduced in the previous section and turn to the case that $\sigma_1$ is an outer involution and that $\sigma_2$ is inner.
	Consider then the automorphism $\tau_2 := \sigma_1\circ \sigma_2$. It is immediate that $\sigma_1$ and $\tau_2$ commute, because $\sigma_1$ and
	$\sigma_2$ commute. Write $H := (G^{\sigma_1} \cap G^{\tau_2})_0$. By assumption $\sigma_1\circ (\tau_2)^{-1} = (\sigma_2)^{-1}$ is an inner automorphism,
	and therefore the pair $(G, H)$ is isotropy formal by \cref{cor-eq-formality-in-good-cases}. Moreover, it follows exactly as in the proof of \cref{cor-eq-formality-in-good-cases}
	that $S$ must be a maximal torus for $H$. Indeed, if $Y \in Z_{\lieg}(\lies)$ is fixed by $\sigma_1$, then $Y \in \liet_1$, because $\sigma_1$
	restricts to $-\id$ on each root space $\lieg_{\alpha}$ with $\alpha \in \Omega$, and if in addition $Y$ is fixed by $\tau_2$ then $Y \in \lies$, because
	$\lies = (\liet_1)^{\sigma_2}$ and $\tau_2|_{\liet_1} = \sigma_2|_{\liet_1}$. By \cref{prop-formality-depends-on-torus}, also $(G, K)$ is isotropy formal. We have shown:
	
	\begin{theorem}\label{thm-formality-if-involution-is-outer-other-is-inner}
		If $\sigma_1$ is an outer automorphism and $\sigma_2$ is inner, then $(G, K)$ is isotropy formal.
	\end{theorem}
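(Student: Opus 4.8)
The plan is to reduce this statement to \cref{cor-eq-formality-in-good-cases} by replacing the pair $(\sigma_1,\sigma_2)$ by the auxiliary pair $(\sigma_1,\tau_2)$, where $\tau_2 := \sigma_1\circ\sigma_2$. Since $\sigma_1$ and $\sigma_2$ commute, $\sigma_1$ commutes with $\tau_2$ as well, and $\tau_2$ is again of finite order, being an element of the finite subgroup of $\Aut(G)$ generated by $\sigma_1$ and $\sigma_2$. A short computation using that $\sigma_1$ is an involution gives $\sigma_1\circ\tau_2^{-1} = \sigma_1\circ\sigma_2^{-1}\circ\sigma_1^{-1} = \sigma_2^{-1}$, which is an inner automorphism by hypothesis. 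As $G$ is simple, \cref{thm-reflection} applied to the commuting pair $\sigma_1$, $\tau_2$ shows that they satisfy property $\star$, so \cref{cor-eq-formality-in-good-cases} yields that the pair $(G,H)$ is isotropy formal, where $H := (G^{\sigma_1}\cap G^{\tau_2})_0$.

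It then remains to transfer isotropy formality from $H$ back to $K$, which I would do via \cref{prop-formality-depends-on-torus} by showing that the maximal torus $S$ of $K$ is also a maximal torus of $H$. The inclusion $S\subseteq H$ is immediate: for $s\in S\subseteq G^{\sigma_1}\cap G^{\sigma_2}$ we have $\tau_2(s) = \sigma_1(\sigma_2(s)) = \sigma_1(s) = s$. For maximality, suppose $Y\in Z_{\lieg}(\lies)$ is fixed by both $\sigma_1$ and $\tau_2$. By \cref{prop-centralizers-of-subtori} we have $Z_{\lieg}(\lies) = \liet\oplus\bigoplus_{\alpha\in\Omega}\lieg_{\alpha}$, and \cref{prop-strongly-orthogonal} tells us that $\sigma_1$ acts as $-\id$ on each $\lieg_{\alpha}$ with $\alpha\in\Omega$; hence the $\sigma_1$-fixed vector $Y$ must lie in $\liet$, and in fact in $\liet^{\sigma_1} = \liet\cap\liek_1 = \liet_1$, the last equality because $\liet_1 = Z_{\liek_1}(\lies)$ is a maximal abelian subalgebra of $\liek_1$ contained in $\liet$. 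Since $\sigma_1|_{\liet_1} = \id$, we get $\tau_2|_{\liet_1} = \sigma_2|_{\liet_1}$, and therefore $Y\in(\liet_1)^{\sigma_2} = \lies$ by construction. Thus $S$ is a maximal torus of $H$, and since $S$ is then a common maximal torus of $K$ and $H$, \cref{prop-formality-depends-on-torus} yields that $(G,K)$ is isotropy formal.

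I do not anticipate a genuine obstacle here: once \cref{cor-eq-formality-in-good-cases} and \cref{thm-reflection} are available, the whole argument is a short formal manipulation, and the only mildly delicate point — that $S$ survives as a maximal torus of $H$ — is settled by the same centralizer computation that already appears in the proof of \cref{cor-eq-formality-in-good-cases}.
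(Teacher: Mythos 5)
Your proposal is correct and follows essentially the same route as the paper: both pass to $\tau_2 = \sigma_1\circ\sigma_2$, observe that $\sigma_1\circ\tau_2^{-1} = \sigma_2^{-1}$ is inner so that \cref{cor-eq-formality-in-good-cases} applies to $(G,H)$, and then transfer formality back to $(G,K)$ via \cref{prop-formality-depends-on-torus} by checking that $S$ remains a maximal torus of $H$ through the same centralizer computation. Your version is, if anything, slightly more explicit than the paper in verifying property $\star$ for the pair $(\sigma_1,\tau_2)$ via \cref{thm-reflection} and in checking the inclusion $S\subseteq H$.
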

	
	\section{The case that $\sigma_1$ is inner and $\sigma_2$ is outer}\label{sec-involution-inner-other-outer}
	
	In this section we assume that $G$ is a simple, compact, connected Lie group and that $\sigma_1$ is an involute inner automorphism while $\sigma_2$ is a finite-order outer
	automorphism commuting with $\sigma_1$. We have no general argument to prove that the pair $(G, K)$ is isotropy formal in this case. However,
	there are only three series of simple Lie algebras admitting non-inner automorphisms, and therefore $\lieg$ must either be of
	isomorphism type $\liealg{e}_6$, $\liealg{a}_n$, or $\liealg{d}_r$. We will verify directly that in each of these cases the isotropy action of $K$ on $G/K$ is equivariantly formal. 
	Combined with the results of the previous two sections this will finish the proof that $(G, K)$ is isotropy formal whenever $G$ is simple. As already remarked earlier, 
	\cref{thm-reduction-principle} then shows that \cref{thm-main} also holds for arbitrary Lie groups $G$. 
    
	\subsection*{Lie groups of type $\liealg{e}_6$} Suppose first that $\lieg$ is of type $\liealg{e}_6$. According to the classification of isomorphism types of fixed point sets of involutive automorphisms 
	on simple Lie groups (\cref{prop-classification-of-involutions}), the fixed point set $\liek_1$ of $\sigma_1$ then is a sum of Lie algebras $\liek_1 = \liealg{i}_1\oplus\liealg{i}_2$ 
	where the pair $(\liealg{i}_1, \liealg{i}_2)$ either is of types $(\liealg{a}_1, \liealg{a}_5)$ or $(\CC, \liealg{d}_5)$. In particular, both $\liealg{i}_1$ and $\liealg{i}_2$ are invariant 
	under $\sigma_2$. Using \cref{prop-rank-of-fixed-point-set-of-automorphism} for each of the ideals $\liealg{i}_1$ and $\liealg{i}_2$, we conclude that the fixed point set of 
	$\sigma_2$ on $\liek_1$ must be at least of rank $4$. That is, $\rank \liek \geq 4$. On the other hand, $\sigma_2$ is an outer automorphism by assumption and on Lie 
	algebras of type $\liealg{e}_6$, each such automorphism fixes a subalgebra of rank $4$ by \cref{prop-rank-of-fixed-point-set-of-automorphism}. As $\liek \subseteq \liek_2$,
	we thus must have $\rank \liek = 4$, and so $\lies$ is also a maximal torus for $\liek_2$. Because $(G, K_2)$ is isotropy formal by 
	\cite[theorem 1.1]{Goertsches-automorphisms}, also $(G, K)$ then must be isotropy formal by \cref{prop-formality-depends-on-torus}.
	
	\subsection*{Lie groups of type $\liealg{a}_n$}
	
	Next, we assume that $\lieg$ is of type $\liealg{a}_n$, $n \geq 1$. Fix a notion of positivity $\Delta^{+}$ on the set of all $\lieg$-roots $\Delta$. Passing to conjugates if necessary, 
	we know from \cref{thm-normal-form-automorphisms} that we may express $\sigma_2$ as $\sigma_2 = c_g\circ\nu$, where each element in $\Omega$ is fixed by $\nu$ and where 
	$\nu(\Delta^{+}) = \Delta^{+}$. Hence, if $\Pi \subseteq \Delta^{+}$ denotes the set of simple roots, then $\nu$ permutes $\Pi$ and $\nu|_{\Pi}\colon\Pi\rightarrow\Pi$ is an 
	automorphism of the Dynkin diagram of $\lieg$. As is known, there is just one such automorphism $\Pi\rightarrow\Pi$ apart from the identity, and so by assumption $\nu|_{\Pi}$ 
	must be this non-trivial automorphism. In particular, writing $\ell := \lceil n/2\rceil$, we see that there is an enumeration $\Pi = \{\alpha_1, \ldots, \alpha_n\}$ of the simple roots 
	such that $\nu(\alpha_i) = \alpha_{n-i+1}$ for all $i \leq \ell$. The roots fixed by $\nu$ are then the roots
	\begin{align*}
		\delta_t &:= \alpha_t + \alpha_{t+1} + \ldots + \alpha_{n-(t+1)+1} + \alpha_{n-t+1},
	\end{align*}
	where again $t = 1, \ldots, \ell$. We also observe that the Weyl group element $w = s_{\alpha_t}s_{\alpha_{n-t+1}}$ satisfies $w(\delta_{t+1}) = \delta_t$ and that $w$ commutes 
	with $\nu$. Hence, we may additionally assume that 
	\begin{align*}
		\Omega^{+} &= \{\delta_1, \delta_2, \ldots, \delta_m\}
	\end{align*}
	for some $m \geq 0$. Since $\liet^{\nu} = \bigoplus_{j=1}^{\ell} \CC H_{\delta_j}$ and $\Ad_g|_{\liet} = \prod_{\alpha\in\Omega^{+}} s_{\alpha}$, we conclude 
	that $\lies = \liet^{\sigma_2}$ has as basis the vectors $H_{\delta_{m+1}}, \ldots, H_{\delta_{\ell}}$. However, since $\nu$ is induced by an automorphism of the Dynkin 
	diagram of $\lieg$, we know that the vectors $\beta_1, \ldots, \beta_{\ell}$ with
	\begin{align*}
		\beta_i &:= (\alpha_i)|_{\liet^{\nu}}, 
	\end{align*}
	form a basis for $(\liet^{\nu})^{\ast}$. In fact, since $\nu$ permutes $\Pi$, $\Delta^{+}$ induces a notion of positivity on the $\liet^{\nu}$-roots of the Lie algebra $\lieh := \lieg^{\nu}$
	via restriction, and with respect to this notion of positivity the vectors $\beta_1, \ldots, \beta_{\ell}$ form a simple system. Moreover, $\lieh$ is of type $\liealg{b}_{\ell}$ or $\liealg{c}_{\ell}$, depending on the parity of $n$, and $\beta_1, \ldots, \beta_{\ell-1}$ all have the same length. The 
	element $K_{\beta_i}$ which is dual to $\beta_i$ with respect to $\langle\cdot,\cdot\rangle|_{\lieh\times\lieh}$ then is just $1/2(H_{\alpha_i} + H_{\nu(\alpha_i)})$, 
	and so we see that 
	\begin{align*}
		\lies &= \bigoplus_{j=m+1}^{\ell} \CC H_{\delta_j} = \bigoplus_{j=m+1}^{\ell} \CC K_{\beta_j}.
	\end{align*}
	Thus, $\lies$ is a maximal torus for the subalgebra $\liealg{f}$ of $\lieh$ generated by the root spaces $\lieh_{\beta_j}$ with $j = m + 1, \ldots, \ell$. Since this subalgebra 
	$\liealg{f}$ is totally non-homologous to zero in $\lieh$ (see e.\,g.\,\cite[corollary II.5.7]{thesis}) and $\lieh$ in turn is totally non-homologous to zero in $\lieg$ 
	by \cref{prop-dynkin-diagram-auto-tnhz}, it follows that $\liealg{f}$ is totally non-homologous to 
	zero in $\lieg$ as well. Thus, the isotropy action of $K$ on $G/K$ is equivariantly formal by \cref{prop-formality-depends-on-torus} and \cref{prop-tnhz-implies-isotropy-formality}.
	
	\subsection*{Lie groups of type $\liealg{d}_r$} The last possibility to discuss is when $\lieg$ is of type $\liealg{d}_r$, $r \geq 4$. 	
	We first treat the case that $\sigma_2$ represents an element of order $3$ in $\Out(G)$. This is only possible if
	$r = 4$ \cite[theorem X.3.29]{Helgason}. By \cref{prop-rank-of-fixed-point-set-of-automorphism}, the rank of $\liek_2$ equals $2$. Therefore, $\rank \liek$ either equals $2$ or 
	$1$, because every finite-order automorphism on a non-Abelian, compact Lie group has a nontrivial fixed point set \cite[lemma X.5.3]{Helgason}. If
	$\dim \lies = 2$, then $\lies$ also is a maximal torus for $\liek_2$ and $(G, K)$ is isotropy formal by \cref{prop-formality-depends-on-torus}, since $(G, K_2)$ is isotropy formal
	\cite[theorem 1.1]{Goertsches-automorphisms}. Now assume that $\lies$ is $1$-dimensional. According to \cref{prop-classification-of-involutions} the isomorphism type of $\liek_1$ can only be 
	$\liealg{a}_1\oplus\liealg{a}_1\oplus\liealg{a}_1\oplus\liealg{a}_1$. The only way that $\sigma_2$ fixes a subalgebra of rank $1$ on $\liek_1$ thus is that $\sigma_2$ 
	permutes the simple ideals of $\liek_1$, and so $\liek$ must be of type $\liealg{a}_1$. But every simple rank-$1$ Lie subgroup of a compact Lie group is 
	totally non-homologous to zero (cf.\,\cite[lemma II.5.11]{thesis}), whence the isotropy action is equivariantly formal in this case as well.
	
	Henceforth, we assume that $\sigma_2$ is of order $2$ in $\Out(G)$. As in the case of Lie groups of type $\liealg{a}_n$
	we will have to determine the set $\Omega$, up to conjugation. Because $\sigma_1$, $\sigma_2$ satisfy $\star$ by \cref{thm-reflection} and $\sigma_1$
	is an inner automorphism, we find  a notion of positivity $\Delta^{+}$ on the set of all $G$-roots $\Delta$ as in 
	\cref{cor-inner-automorphisms-normal-form-for-roots-vanishing-on-maximal-torus}. Thus, $\sigma_2 = c_g\circ\nu$ for some automorphism $\nu$ which fixes the elements of 
	$\Omega$ and which also preserves the set $\Delta^{+}$. Enumerate the simple roots $\Pi = \{\alpha_1, \ldots, \alpha_r\}$ as in \cref{fig-dynkin-diagram-type-d}, i.\,e. so that in the Dynkin 
	diagram of $\lieg$ the root $\alpha_{r-2}$ corresponds to the node with a triple link, and such that $\nu$ interchanges $\alpha_{r-2}$ with $\alpha_{r-1}$ and fixes all other 
	simple roots.

	We now proceed similarly as in \cite[theorem II.3.9]{thesis} and describe an algorithm to obtain a standard form for $\Omega^{+}$, up to application of a Weyl group element.
	To this end note that, because $\lieg$ is simple, the Weyl group acts transitively on the set of roots \cite[theorem 10.3]{Humphreys-lie-algebras}. In particular, there is some Weyl group 
	element which maps a given root in $\Omega^{+}$ to the highest root of $\liealg{d}_r$. In fact, every root $\alpha \in \Omega^{+}$ is fixed 
	by $\nu$, so if we express $\alpha$ as $\alpha = m_1\alpha_1 + \ldots + m_r\alpha_r$, then $m_{r-1} = m_r$, and hence, to map $\alpha$ to the highest root
	we only need to use Weyl group elements which are compositions of the reflections $s_{\alpha_1}, \ldots, s_{\alpha_{r-2}}$ or the element $s_{\alpha_{r-1}}\circ s_{\alpha_r}$.
	All of these elements commute with $\nu|_{\liet}$, and actually are represented by elements in $\mathrm{Int}(G)$ which commute with $\nu$ by 
	\cref{cor-inner-automorphisms-normal-form-for-roots-vanishing-on-maximal-torus}. Without loss of generality, we may therefore assume that the highest root $\delta$ is 
	contained in $\Omega^{+}$.
	
	Note that there is exactly one simple root which is non-perpendicular to $\delta$, namely $\alpha_2$. Since the elements of
	$\Omega^{+}$ are mutually perpendicular (\cref{prop-strongly-orthogonal}) and since the highest root is dominant, it therefore follows that every element $\alpha \in \Omega^{+}$
	different from $\delta$ must be contained in the $\ZZ_{\geq 0}$-span of $\Pi - \{\alpha_2\}$; that is, either equal to $\alpha_1$ or contained in the $\ZZ_{\geq 0}$-span of 
	$\alpha_3, \ldots, \alpha_r$. But the $\ZZ_{\geq 0}$-span of $\alpha_3, \ldots, \alpha_r$ is (up to identification) the root lattice of the Lie subalgebra $\lieg_{\geq 3}$ of 
	$\lieg$ generated by $\lieg_{\alpha_3}, \ldots, \lieg_{\alpha_r}$, and this is a Lie algebra of type $\liealg{d}_{r-2}$, $\liealg{a}_3$, or 
	$\liealg{a}_1\oplus\liealg{a}_1\oplus\liealg{a}_1$, depending on whether $r \geq 6$, $r = 5$, or $r = 4$, respectively. Moreover, the Weyl group of $\lieg_{\geq 3}$ can be identified 
	with the subgroup of the Weyl group of $\lieg$ generated by the simple reflections $s_{\alpha_3}, \ldots, s_{\alpha_r}$, and this subgroup leaves invariant both the highest root 
	and $\alpha_1$. 
	
	Proceeding recursively, we thus obtain the following description of $\Omega^{+}$, up to application of a Weyl group element $w$ which commutes with $\nu$. Define
	for $t = 1, \ldots, r - 2$ the root $\delta_t$ by 
	 \begin{align*}
	 	\delta_t &= \alpha_t + 2(\alpha_{t+1} +\ldots + \alpha_{r-2}) + \alpha_{r-1} + \alpha_r.
	\end{align*}
	This is the highest root of the Lie subalgebra $\lieg_{\geq t}$ generated by $\lieg_{\alpha_t}, \ldots, \lieg_{\alpha_r}$, even when $t = r - 2$, in which case
	$\lieg_{\geq t}$ is of type $\liealg{a}_3$. Then there exists an odd index $m$ with $m \leq r - 2$ and
	a set of indices $I \subseteq \{1, 3, \ldots, m\} \cup \{r - 1, r\}$ with the property that $\Omega^{+}$ is equal to
	\begin{align*}
		\{\alpha_i \,|\, i \in I\} \cup \{\delta_1, \delta_3, \ldots, \delta_m\}.
	\end{align*}
	
	\begin{proposition}\label{prop-normal-form-for-d-series}
		Suppose that $\Omega$ is non-empty. There exists an inner automorphism $w$ commuting with $\nu$ such that $w(\Omega^{+})$ is either
		\begin{enumerate}
			\item
			$\{\delta_1, \alpha_1, \delta_3, \alpha_3, \ldots, \delta_m, \alpha_m\}$, where $m$ is odd and $m \leq r - 2$, or
			
			\item
			$\{\delta_1, \delta_3, \ldots, \delta_k\}$, where $k$ is the maximal odd integer with $k \leq r - 2$.
		\end{enumerate}
	\end{proposition}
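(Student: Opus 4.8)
The plan is to sharpen the ``chain cascade'' description of $\Omega^{+}$ established just above by feeding in the parity constraint that comes from $\sigma_1$ being an involution, i.e.\ \cref{cor-inner-automorphisms-normal-form-for-roots-vanishing-on-maximal-torus}. First I would work in the standard model of the root system of type $\liealg{d}_r$ in $\RR^r$, with roots $\pm e_i \pm e_j$ ($1 \leq i < j \leq r$), simple roots $\alpha_i = e_i - e_{i+1}$ for $i < r$ and $\alpha_r = e_{r-1} + e_r$ as in \cref{fig-dynkin-diagram-type-d}, and $\langle\cdot,\cdot\rangle$ normalized so that $\langle e_i, e_j\rangle = \delta_{ij}$; then $\delta_t = e_t + e_{t+1}$ for $1 \leq t \leq r-2$, the automorphism $\nu$ acts on $\liet^{\ast}$ by $e_r \mapsto -e_r$ while fixing $e_1, \ldots, e_{r-1}$, and a root is fixed by $\nu$ exactly when it does not involve $e_r$.

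By the preceding discussion, after applying an inner automorphism commuting with $\nu$ we may assume $\Omega^{+} = \{\alpha_i : i \in I\} \cup \{\delta_1, \delta_3, \ldots, \delta_m\}$ with $m$ odd, $1 \leq m \leq r-2$, and $I \subseteq \{1,3,\ldots,m\} \cup \{r-1,r\}$. The first step is to note that $I$ misses $\{r-1,r\}$: since $\sigma_2 = c_g\circ\nu$ with $\nu$ fixing $\Omega$ pointwise by \cref{thm-normal-form-automorphisms}, every root in $\Omega^{+}$ is $\nu$-fixed, whereas $\nu$ interchanges $\alpha_{r-1}$ and $\alpha_r$; hence $I \subseteq \{1,3,\ldots,m\}$. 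The second step is to invoke \cref{cor-inner-automorphisms-normal-form-for-roots-vanishing-on-maximal-torus}. Its hypotheses hold here: $\sigma_1$ is inner, and $\sigma_2|_{\liet} = \bigl(\prod_{\alpha\in\Omega^{+}} s_{\alpha}\bigr)\circ\nu|_{\liet}$ is a product of commuting involutions (the $s_{\alpha}$ commute with $\nu|_{\liet}$ because $\nu(\alpha) = \alpha$), hence an involution. Writing $\Sigma := \sum_{\alpha\in\Omega^{+}}\alpha$ and using that $\liealg{d}_r$ is simply laced, so that $\tfrac{2\langle\alpha,\beta\rangle}{\langle\alpha,\alpha\rangle} = \langle\alpha,\beta\rangle$, the corollary tells us that $\langle\Sigma,\beta\rangle$ is even for every $\nu$-fixed root $\beta$; running over $\beta = \pm e_i \pm e_j$ with $i < j \leq r-1$ this is equivalent to: the coordinates $c_1,\ldots,c_{r-1}$ of $\Sigma = \sum_k c_k e_k$ all have the same parity.

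The last step is the explicit computation of $\Sigma$. Because the odd indices $1, 3, \ldots, m$ contribute consecutive pairs, $\sum_t \delta_t = e_1 + e_2 + \cdots + e_{m+1}$, and adding $\sum_{i\in I}(e_i - e_{i+1})$ (whose indices $i$, hence $i+1$, are pairwise distinct) yields $c_i = 2$ for $i \in I$, $c_{i+1} = 0$ for $i\in I$, $c_k = 1$ for the remaining $k \leq m+1$, and $c_k = 0$ for $k > m+1$. If some odd $j \leq m$ were not in $I$, then $c_j = c_{j+1} = 1$ are odd, which forces every $c_k$ with $k \leq r-1$ to be odd; this rules out any $c_i = 2$, so $I = \emptyset$, and rules out any $c_k = 0$ with $k \leq r-1$, so $m+1 \geq r-1$, i.e.\ $m = r-2$ (whence $r$ is odd). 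This is case (2). Otherwise $I = \{1,3,\ldots,m\}$ and $\Omega^{+} = \{\delta_1,\alpha_1,\delta_3,\alpha_3,\ldots,\delta_m,\alpha_m\}$, which is case (1); here every relevant $c_k$ is $0$ or $2$, so the parity condition is indeed met. The one point requiring care is the coefficient bookkeeping for $\Sigma$ together with the small-rank and parity-of-$r$ edge cases (e.g.\ for $\liealg{d}_4$ only case (1) with $m=1$ can occur); beyond that, the argument is a short computation built on the normal form above and on \cref{cor-inner-automorphisms-normal-form-for-roots-vanishing-on-maximal-torus}.
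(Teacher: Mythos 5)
Your proof is correct and follows essentially the same route as the paper: starting from the chain-cascade normal form $\Omega^{+} = \{\alpha_i : i \in I\}\cup\{\delta_1,\delta_3,\ldots,\delta_m\}$, discarding $r-1, r$ from $I$ because those simple roots are not $\nu$-fixed, and then feeding the parity criterion of \cref{cor-inner-automorphisms-normal-form-for-roots-vanishing-on-maximal-torus} into this description --- the paper tests $p(\beta)$ at the even-indexed simple roots and at $\alpha_{m+1}$ and propagates membership in $I$, while you package the same information as the statement that the coordinates of $\Sigma = \sum_{\alpha\in\Omega^{+}}\alpha$ in the standard $\RR^r$-model all have equal parity. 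Your bookkeeping even yields the slightly sharper conclusion that case (2) forces $m = r-2$ with $r$ odd, which is consistent with (and implies) the statement as given.
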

	\begin{proof}
		We adopt the notation of the paragraph preceding this proposition and assume, without loss of generality, that $w = \id$. First note that in the description of
		$\Omega^{+}$ obtained earlier we must have $I \subseteq \{1, \ldots, m\}$, because $\alpha_{r-1}$ and $\alpha_r$ are not fixed by $\nu$. Next, recall
		from \cref{cor-inner-automorphisms-normal-form-for-roots-vanishing-on-maximal-torus} that the integer
		\begin{align*}
			p(\beta) &:= \sum_{\alpha\in\Omega^{+}}2\frac{\langle\alpha,\beta\rangle}{\langle\alpha,\alpha\rangle}
		\end{align*}
		must be even whenever $\beta$ is equal to one of the simple roots $\alpha_1, \ldots, \alpha_{r-2}$. However, if $\beta = \alpha_{2j}$, then each
		summand in the expression above equals $\pm 1$. Hence, if $2j \leq r - 2$, then
		there must be an even number of roots in $\Omega^{+}$ which are non-perpendicular to $\alpha_{2j}$. 
		
		It follows that either $I$ is empty or that $I = \{1, \ldots, m\}$. Indeed, suppose that $\ell \in I$ and $\ell < m$. This means that $\delta_{\ell}$,
		$\delta_{\ell + 2}$, and $\alpha_{\ell}$ are contained in $\Omega^{+}$. Each of these roots is non-perpendicular to $\alpha_{\ell + 1}$, whence there
		must be at least one more root in $\Omega^{+}$ which is non-perpendicular to $\alpha_{\ell + 1}$. But the only roots of the form $\delta_k$ or $\alpha_k$
		with odd $k$ which are non-perpendicular to $\alpha_{\ell + 1}$ are exactly $\delta_{\ell}$, $\delta_{\ell + 2}$, $\alpha_{\ell}$, and $\alpha_{\ell + 2}$,
		whence $\alpha_{\ell + 2}$ must be contained in $\Omega^{+}$ as well. A similar argument shows that $\alpha_{\ell - 2} \in \Omega^{+}$ if $\ell > 1$.

		To finish the proof it only remains to note that if $I$ is empty, then the integer $m$ must be maximal with $m \leq r - 2$ for otherwise $p(\alpha_{m+1})$ would not be even.		
	\end{proof}
	
	With these results at hand, we can finally prove that the isotropy action is equivariantly formal in case that $\lieg$ is of type $\liealg{d}_r$.
	After conjugation with an inner automorphism, we can assume that $\Omega^{+}$ has one of the two forms given in \cref{prop-normal-form-for-d-series}.  If $\Omega^{+}$ is
	of the first form we have $p(\alpha) = 0$ for all simple roots $\alpha$. Hence, in this case, by representing each factor in $\Ad_g|_{\liet} = \prod_{\alpha\in\Omega^{+}}s_{\alpha}$ 
	as in \cref{prop-simple-root-reflections} we see that $\Ad_g|_{\liet}$ can be lifted to an involution in $G$ (\cref{cor-simple-reflections-on-weight-spaces}), and 
	this choice of lift commutes with $\nu$. Then $\sigma_2$ is an involution and $\liek_1 \cap \liek_2$ is the Lie algebra of a pair of 
	commuting involutions, whence the isotropy action is equivariantly formal by \cref{prop-formality-depends-on-torus} and equivariant formality of
	$\ZZ_2\times\ZZ_2$-symmetric spaces (cf.\,\cite{AmannKollross} or \cite[theorem II.1.2]{thesis}). Therefore, we suppose that
	\begin{align*}
		\Omega^{+} &= \{\delta_1, \delta_3, \ldots, \delta_k\},
	\end{align*}
	where $k$ is the maximal odd integer with $k \leq r - 2$. Now let $\sigma_1 = c_t$ with $t = \exp(U)$ for some element $U \in \liet$. Using that $\sigma_2|_{\liet}$
	is an involution, we can decompose $U$ as $U = U^{+} + U^{-}$ with $U^{+}$ and $U^{-}$ in the $1$- and $(-1)$-eigenspace of $\sigma_2|_{\liet}$, respectively.
	Just as in the proof of \cref{cor-inner-automorphisms-normal-form-for-roots-vanishing-on-maximal-torus} it follows from the commutativity of $\sigma_1$ and $\sigma_2$
	that $c_{\exp(U^{-})}$ must be an involution which commutes with $\sigma_2$, and the argument given in the proof of \cref{cor-eq-formality-in-good-cases} shows that 
	$S$ still is a maximal torus for the fixed point set of $c_{\exp(U^{-})}$ and $\sigma_2$. Consequently, we may assume that $U = U^{-}$. In fact, we may assume that 
	$U = X + \I\pi\sum_{\alpha\in\Omega^{+}}\frac{1}{\langle\alpha,\alpha\rangle} H_{\alpha}$ with $\nu(X) = -X$, 
	cf.\,the proof of \cref{cor-inner-automorphisms-normal-form-for-roots-vanishing-on-maximal-torus}. 
	
	Note that $X$ cannot be zero, for otherwise it would follow from the involutivity of $\sigma_1$ that 
	$p(\beta)$ is even for all roots $\beta$, which is not the case for $\alpha_{r-2}$ (if $r$ is even) 
	or $\alpha_{r-1}$ (if $r$ is odd). Then a straightforward computation using that $X \in \CC(H_{\alpha_{r-1}}-H_{\alpha_r})$ shows that for some $t \in \{r -1, r\}$ and every 
	root $\alpha$ of the form $\alpha = m_1\alpha_1 + \ldots + m_r\alpha_r$ we have
	\begin{align*}
			\sigma_1|_{\lieg_\alpha} &= 
			\begin{cases}
				(-1)^{m_{r-2} + m_t}, 	&\text{$r$ even},	\\
				(-1)^{m_t},			&\text{$r$ odd.}
			\end{cases}
	\end{align*}
	Without loss of generality, assume that $t = r$ and observe that $r$ cannot be even. Indeed, suppose that $r$ is even for a contradiction. A root $\alpha$ has
	$m_{r-2} \in \{0, 1, 2\}$, and if $m_{r-2} = 2$, then necessarily $m_{r-1} = m_r = 1$. The above description of $\sigma_1$ hence shows that
	$\liek_1$ equals $\liet\oplus\bigoplus_{\alpha} \lieg_{\alpha}$, where $\alpha$ ranges over all roots $\alpha$ which are contained in the $\ZZ$-span
	of the the set  $\Phi = \{\alpha_1, \ldots, \alpha_{r-3}\} \cup \{\alpha_{r-2} + \alpha_r, \alpha_{r-1}\}$. In particular, $Z(\liek_1) = \bigcap_{\alpha\in\Phi} \ker(\alpha)$ is $1$-dimensional.
	However, $Z(\liek_1)$ is a $\sigma_2$-invariant subspace, and if $r$ is even, then $\sigma_2(\alpha_{r-1}) = \alpha_r$, whence  
	$Z(\liek_1)$ is also contained in $\bigcap_{j=1}^r \ker(\alpha_j)$. But the latter space is trivial, a contradiction.

	Consequently, we can assume that $r$ is odd, say with $r = 2q + 1$, and $k = 2q - 1$. Because any root $\alpha$ in $\lieg$ has $m_{r-1}, m_r \in \{0, 1\}$, it follows that 
	$\liek_1$ is equal to $\liet + \lieg'$, where $\lieg'$ is the Lie subalgebra generated by the weight spaces of the roots $\alpha_1, \ldots, \alpha_{r-1}$.
	In fact, $\lieg'$ is the semi-simple part of $\liek_1$ and $Z(\liek_1) = \ker(\alpha_1) \cap \ldots \cap \ker(\alpha_{r-1})$. Since $\dim\lies = \liet^{\nu} - q$ by our
	explicit description of $\sigma_2$ and since $\nu$ swaps 
	$\alpha_{r-1}$ with $\alpha_r$, we furthermore see that $\liek$ is of rank $2q - q = q$, which also is the minimal rank of any non-trivial automorphism on a Lie 
	algebra of type $\liealg{a}_{2q}$ (\cref{prop-rank-of-fixed-point-set-of-automorphism}). Hence, since $\lieg'$ is of type $\liealg{a}_{2q}$, we conclude that 
	$\sigma_2|_{\lieg'}$ is an outer automorphism and that $\liek$ is equal to the fixed point set of $\sigma_2$ on $\lieg'$. In particular, $\lies$ is a maximal torus of the fixed point 
	subalgebra of $\sigma_2|_{\lieg'}$ and $\liealg{u} = Z_{\lieg'}(\lies)$ is a maximal torus in $\lieg'$. Thus, since $\lies$ is fixed pointwise by $\sigma_2$, we can define a notion of 
	positivity on the roots of $\lieg'$ (with respect to $\liealg{u}$) which is preserved by $\sigma_2|_{\lieg'}$, and then extend the automorphism of the Dynkin diagram induced by 
	$\sigma_2|_{\liealg{u}}$ to an automorphism $\tau\colon\lieg'\rightarrow\lieg'$. Its fixed point set $\liealg{f} = (\lieg')^{\tau}$ shares the 
	maximal torus $\lies$ with $\liek$. Moreover, $\liealg{f}$ is totally non-homologous to zero in $\lieg'$ by \cref{prop-dynkin-diagram-auto-tnhz}. In fact, if $P_{\lieg'}$ and $P_{\liealg{f}}$ denote, respectively, the 
	polynomials on $\lieg'$ and $\liealg{f}$ which are invariant under the adjoint representation, and if $I \subseteq P_{\lieg'}$ is the ideal generated by all polynomials of odd degree, 
	then restriction of polynomials induces a surjection $P_{\lieg'}/I \rightarrow P_{\liealg{f}}$, because $P_{\liealg{f}}$ is generated by polynomials of even degree 
	\cite[table I, section 3.7]{Humphreys-reflection-groups}. However, by 
	\cite[corollary II.5.10]{thesis} restriction also induces a surjection $P_{\lieg} \rightarrow P_{\lieg'}/I$, $P_{\lieg}$ the invariant poloynomials on $\lieg$, and so 
	$P_{\lieg} \rightarrow P_{\liealg{f}}$ too must be a surjection. That is to say, $\liealg{f}$ is totally non-homologous to zero in $\lieg$. In particular, the pair $(G, F)$ is isotropy formal 
	by \cref{prop-tnhz-implies-isotropy-formality}. Since $\lies$ is a maximal torus for both $\liealg{f}$ and $\liek$, also the pair $(G, K)$ must be isotropy formal 
	(\cref{prop-formality-depends-on-torus}).

	\bibliographystyle{aomalpha}
	\bibliography{gamma}
\end{document}